\documentclass[a4paper,fleqn]{cas-sc}

\usepackage{amsfonts,amssymb,amsthm,bm}
\usepackage{arydshln}
\usepackage{algorithm,algpseudocode}
\usepackage[numbers,sort&compress]{natbib}
\hypersetup{hypertexnames=false}

\newtheorem{theorem}{Theorem}
\newtheorem{lemma}[theorem]{Lemma}
\newtheorem{remark}{Remark}[theorem]
\begin{document}
\let\WriteBookmarks\relax
\renewcommand{\floatpagefraction}{.7}
\def\textpagefraction{.001}

\shorttitle{RBF-FD on moving manifolds}
\shortauthors{M. Lowery et al.}

\title[mode=title]{A high-order, meshless, Lagrangian--Eulerian RBF-FD method for advection--diffusion--reaction on moving manifolds}
\tnotemark[1]
\tnotetext[1]{ML and VS were supported by National Science Foundation grant
DMS 2505986. VS was also supported by the Air Force Office of Scientific
Research (AFOSR) LRIR grant FA9550-25-1-0042 and National Science Foundation
grant SHF 2403379. GW was supported by National Science Foundation grants
2309712 and 2505987.}

\author[1]{Matthew Lowery}
\ead{mlowery@cs.utah.edu}

\author[2]{Grady B. Wright}
\ead{gradywright@boisestate.edu}

\author[1]{Varun Shankar}
\cormark[1]
\ead{shankar@cs.utah.edu}

\affiliation[1]{organization={Kahlert School of Computing, University of Utah},
  city={Salt Lake City},
  state={UT},
  country={USA}}

\affiliation[2]{organization={Department of Mathematics, Boise State University},
  city={Boise},
  state={ID},
  country={USA}}

\cortext[cor1]{Corresponding author.}

\begin{abstract}
We present a high-order radial basis function-generated finite difference
(RBF-FD) method for partial differential equations on moving manifolds
$\mathcal M(t)\subset\mathbb R^3$ of co-dimension one. Our method builds on the tangent-plane
formulation of surface RBF-FD and combines Lagrangian and Eulerian treatments:
the manifold and material derivative are evolved in a Lagrangian fashion,
while the remaining surface differential operators are reconstructed on the
instantaneous point cloud and stabilized, when necessary, by a quasi-analytical
hyperviscosity formulation. Lagrangian marker drift is handled by adaptive
rearrangement using a compact global parametric model of the moving surface,
which also supplies accurate normals and geometry-based quadrature. After
rearrangement, we reconstruct the multistep history by backward
semi-Lagrangian tracing and interpolation before resuming the Lagrangian time
discretization. We exploit temporal coherence through three update strategies:
defect correction for the local RBF-FD weights, a curvature-based update of the
hyperviscosity coefficients between spectral recomputations, and a global
defect-correction iteration that reuses an incomplete LU (ILU) factorization
before preconditioned generalized minimal residual (GMRES) iterations. Finally, we enforce the prescribed global mass balance
through a scalar projection based on the evolving surface quadrature. Numerical
experiments demonstrate high-order convergence, conservation to roundoff in
source-free problems, stable long-time integration, and substantial savings
from the proposed update strategies.
\end{abstract}

\begin{keywords}
radial basis function finite differences \sep moving manifolds \sep surface partial differential equations \sep Lagrangian--Eulerian methods \sep hyperviscosity \sep high-order methods
\end{keywords}

\maketitle

\section{Introduction}
\label{sec:intro}

Partial differential equations on evolving surfaces arise in surfactant transport on fluid interfaces, phase separation and signaling on biological membranes, pattern formation on growing tissues, surface diffusion in materials, and atmospheric transport
\cite{Stone1990,XuZhaoLowengrub2006,ElliottStinner2010,RaetzRoeger2014,BarreiraElliottMadzvamuse2011,EilksElliott2008,Pudykiewicz2006,DziukElliott2013}.
These problems couple two sources of numerical difficulty: the PDE lives on a lower-dimensional manifold, and that manifold changes in time. A useful discretization must therefore track the geometry, approximate surface differential operators accurately on the evolving surface, control deterioration of the spatial discretization, and preserve the global balances built into the PDE.

The main existing approaches differ in how they represent the surface. Evolving surface finite element methods use surface-fitted meshes and provide a mature variational framework for parabolic and geometric evolution equations
\cite{DziukElliott2007EvolvingSurfaces,DziukElliott2007SurfaceFiniteElements,dziuk_fully_2012,DziukElliott2013,LubichMansourVenkataraman2013BDFEvolving,elliott_evolving_2015,kovacs_convergent_2019,elliott_unified_2021}.
Closest-point methods extend surface quantities into a narrow band
\cite{RuuthMerriman2008,MacdonaldRuuth2010,MarzMacdonald2012}, while TraceFEM restricts a bulk finite element space to the evolving surface
\cite{olshanskii_finite_2009,olshanskii_stabilized_2014,olshanskii_eulerian_2014,olshanskii_trace_2017_static,OlshanskiiXu2017TraceFEM,LehrenfeldOlshanskiiXu2018TraceFEM,grande_analysis_2018}.
Meshfree methods instead work directly from point samples of the manifold. These include GMLS methods
\cite{LiangZhao2013,TraskKuberry2020,GrossTraskKuberryAtzberger2020,JonesBoslerKuberryWright2023,hangelbroek2026sla}, global kernel collocation
\cite{fuselier2013highorder,ChenLing2020Extrinsic,ChenLing2020KernelHeatTransport}, and local radial basis function-generated finite difference (RBF-FD) methods
\cite{piret2012orthogonal,shankar2015rbffdSurfaces,lehto2017rbfcompact,shankar_rbf-loi_2018,petras_rbf-fd_2018,PetrasLingPiretRuuth2019LSRBFCPM,ShankarWrightNarayan2020,JonesBoslerKuberryWright2023,wright2023mgm}.
RBF-FD gives sparse, high-order differentiation formulas on scattered surface nodes without mesh connectivity, which makes it a natural candidate for surfaces that deform in time.

Most RBF-FD work on general manifolds treats a fixed surface. High-order formulations use extrinsic derivatives, restricted polynomial spaces, closest-point extensions, and tangent-plane constructions
\cite{piret2012orthogonal,shankar2015rbffdSurfaces,lehto2017rbfcompact,shankar_rbf-loi_2018,petras_rbf-fd_2018,PetrasLingPiretRuuth2019LSRBFCPM,ShankarWrightNarayan2020,reeger_sphere,reeger_smooth,reeger_closed,wright2023mgm,JonesBoslerKuberryWright2023}, while semi-Lagrangian RBF methods handle transport on the sphere and on more general manifolds
\cite{shankar_mesh-free_2018,petras_meshfree_2022}.
Moving the surface changes the problem substantially. The nodes move, the tangent spaces and the RBF-FD weights change, and the global sparse operators change as well. A Lagrangian point cloud can also stretch or cluster until it no longer resolves the solution well. Finally, conservation becomes closely tied to geometry in that as the surface area changes, the quadrature weights representing that area must change with it. A high-order moving-surface method has to deal with all of these effects at once.

We consider the conservative surface advection--diffusion--reaction equation
\begin{equation}
  D_t c + c\,\nabla_{\mathcal M}\!\cdot \bm v
  =
  \nu \Delta_{\mathcal M} c + \lambda c + f,
  \qquad \bm x\in\mathcal M(t),
  \label{eq:moving-surface-pde}
\end{equation}
where $c=c(\bm x,t)$ is a scalar field on the evolving manifold $\mathcal M(t)$, $\bm v$ is the material velocity, and
\[
D_t c = \partial_t c+\bm v\cdot\nabla c
\]
is the material derivative. The geometric term $c\,\nabla_{\mathcal M}\!\cdot\bm v$ accounts for local changes in surface area
\cite{DziukElliott2007EvolvingSurfaces,DziukElliott2013,ChenLing2020KernelHeatTransport}.
For a closed surface,
\[
  \frac{d}{dt}\int_{\mathcal M(t)} c\,dS
  =
  \int_{\mathcal M(t)} (\lambda c+f)\,dS.
\]
Thus, when $\lambda=f=0$, the total scalar mass must remain constant even while $\mathcal M(t)$ expands, contracts, rotates, or deforms. We use this material structure directly: we move the computational nodes with $\bm v$, discretize $D_t c$ along those material trajectories, and build the surface differential operators on the resulting point cloud at each time level.

This work makes four main contributions. First, we develop a high-order Lagrangian--Eulerian RBF-FD discretization for conservative PDEs on evolving manifolds. The unknowns move in a Lagrangian manner with the surface, while tangent-plane RBF-FD supplies the spatial operators on each instantaneous geometry. This removes the need for an Eulerian RBF-FD advection operator for the material transport. Second, we enforce the global conservation law through evolving surface quadrature and a scalar conservation projection onto the discrete mass balance dictated by the PDE; for source-free advection--diffusion this preserves $M_h^n$ to roundoff. Third, we extensively exploit temporal coherence in the evolving discretization at both the local and global levels: we update local RBF-FD weights by defect correction, update the quasi-analytical hyperviscosity parameters without repeated full spectral calculations, and reuse a frozen ILU factorization inside a global defect-correction iteration before invoking preconditioned GMRES
\cite{shankar_hyperviscosity-based_2018,ShankarWrightNarayan2020}. Fourth, we make the Lagrangian point cloud self-correcting: a geometric surface model supplies normals and quadrature information, monitors node quality, triggers rearrangement when necessary, and supports semi-Lagrangian recovery of the BDF history on the new node set. Crucially, the semi-Lagrangian step is only utilized on rearrangement.

These pieces are tightly coupled. The same geometric model that gives tangent-plane normals also gives the surface quadrature used for conservation. Rearrangement restores spatial quality but changes the material nodes, so we reconstruct the short BDF history before continuing the Lagrangian solve. Smooth motion, on the other hand, changes the local and global operators only incrementally, so a defect correction scheme proves effective at updating them without incurring the cost of a complete rebuild at every time step. The resulting method remains local, sparse, high-order, and conservative while allowing the surface and the point cloud to evolve.

We test each part of the method directly. Manufactured problems on contracting, expanding, rotating, anisotropically deforming, and nonzero-genus surfaces show high-order spatial convergence. The surface quadrature and conservation projection enforce the prescribed mass balance to roundoff in the conservative tests and generally improve the relative error. Separate ablations quantify the effects of normal estimation, adaptive hyperviscosity, local and global defect correction, long-time integration, and rearrangement with history recovery. Comparisons with evolving-surface finite element, TraceFEM, and existing RBF-based methods show competitive or improved accuracy at comparable spatial resolution or under refinement~\cite{DziukElliott2007EvolvingSurfaces,OlshanskiiXu2017TraceFEM,PetrasLingPiretRuuth2019LSRBFCPM,ChenLing2020KernelHeatTransport}.
We finish with conservative transport on a biconcave membrane driven by a three-dimensional fluid--structure interaction trajectory, where both the geometry and the material velocity come from an external simulation rather than a manufactured surface map.

The remainder of the paper is organized as follows.
Section~\ref{sec:overview} reviews tangent-plane RBF-FD and hyperviscosity on manifolds.
Section~\ref{sec:methods} presents the moving-surface discretization, adaptive operator updates, surface quadrature and conservation correction, and node rearrangement with history recovery.
Section~\ref{sec:global-bdf-linear-algebra} describes the linear algebra for the implicit BDF systems. Section~\ref{sec:tangent-plane-error-estimates} presents error estimates for our method. Section~\ref{sec:numerical-experiments} presents the numerical experiments.

\section{Overview}
\label{sec:overview}

We first summarize the two stationary-surface ingredients used throughout the
moving-surface method: tangent-plane RBF-FD approximation and manifold hyperviscosity.  The next
section applies these constructions to the surface at each time level and
develops the updates required as the geometry evolves.

\subsection{RBF-FD on the tangent plane}
\label{subsec:overview-tangent-rbffd}

Let $\mathcal M\subset\mathbb R^3$ be a smooth surface sampled at nodes
$X=\{\bm x_i\}_{i=1}^N$.  Tangent-plane RBF-FD replaces each surface stencil
by a two-dimensional local approximation in the tangent space at its center~\cite{JonesBoslerKuberryWright2023,reeger_smooth,wright2023mgm}.
%\cite{reeger_sphere,reeger_smooth,reeger_closed}.  
For a center $\bm x_i$, let
\[
  \mathcal P_i=\{\bm x_{i_1},\ldots,\bm x_{i_{n_s}}\},
  \qquad \bm x_{i_1}=\bm x_i,
\]
be its nearest-neighbor stencil.  Let $\bm n_i$ be the unit normal at
$\bm x_i$ and choose orthonormal tangent vectors $\bm t_{i,1}$ and
$\bm t_{i,2}$.  We write
\[
  T_i=[\bm t_{i,1},\bm t_{i,2}]\in\mathbb R^{3\times2}
\]
for a basis of $T_{\bm x_i}\mathcal M$.  The stencil points are represented in
centered, scaled tangent coordinates by
\begin{equation}
  \bm\zeta_{i_j}
  =
  \frac{T_i^T(\bm x_{i_j}-\bm x_i)}{\rho_i},
  \qquad j=1,\ldots,n_s,
  \label{eq:tp-coordinates}
\end{equation}
where $\rho_i>0$ is a local length scale comparable to the stencil diameter.
Its role is to nondimensionalize the local coordinates; the factors of
$\rho_i$ are restored when the physical surface derivatives are formed.

On the projected stencil we use a polyharmonic spline (PHS)
$\phi(r)=r^{2m+1}$ augmented by the polynomial space $\Pi_\ell(\mathbb R^2)$.  Its
dimension is
\[
  M=\dim\Pi_\ell(\mathbb R^2)=\frac{(\ell+1)(\ell+2)}{2}.
\]
We orthogonalize a basis $\{p_\beta\}_{\beta=1}^M$ of this space on the
projected stencil and form
\begin{equation}
  \mathcal A_i
  =
  \begin{bmatrix}
    \Phi_i & P_i\\
    P_i^T & 0
  \end{bmatrix},
  \qquad
  (\Phi_i)_{jk}=\phi(\|\bm\zeta_{i_j}-\bm\zeta_{i_k}\|),
  \qquad
  (P_i)_{j\beta}=p_\beta(\bm\zeta_{i_j}).
  \label{eq:tp-augmented-system}
\end{equation}
The polynomial degree $\ell$, PHS order $m$, and stencil size $n_s$ determine
the spatial accuracy; we denote the resulting target order by $\xi$. We choose the PHS order $2m+1$ as $\ell$ if $\ell$ is odd and $\ell-1$ if $\ell$ is even~\cite{shankar_hyperviscosity-based_2018}.

For a differential functional $\mathcal D$ in the scaled tangent coordinates,
the local RBF-FD weights satisfy
\begin{equation}
  \mathcal A_i
  \begin{bmatrix}
    \widetilde w_{i,\mathcal D}\\
    \mu_{i,\mathcal D}
  \end{bmatrix}
  =
  \begin{bmatrix}
    \mathcal D\phi(\|\bm\zeta-\bm\zeta_{i_1}\|)|_{\bm\zeta=0}\\
    \vdots\\
    \mathcal D\phi(\|\bm\zeta-\bm\zeta_{i_{n_s}}\|)|_{\bm\zeta=0}\\
    \mathcal D p_1(0)\\
    \vdots\\
    \mathcal D p_M(0)
  \end{bmatrix}.
  \label{eq:tp-local-weights}
\end{equation}
Here $\widetilde w_{i,\mathcal D}\in\mathbb R^{n_s}$ contains the RBF-FD
weights and $\mu_{i,\mathcal D}\in\mathbb R^M$ contains the multipliers that
enforce polynomial reproduction.  In particular,
\begin{equation}
  \sum_{j=1}^{n_s}
  (\widetilde w_{i,\mathcal D})_j p(\bm\zeta_{i_j})
  =
  \mathcal Dp(0),
  \qquad p\in\Pi_\ell(\mathbb R^2).
  \label{eq:tp-polynomial-reproduction}
\end{equation}

We next convert the scaled-coordinate weights to surface derivatives.  For
$\mathcal D=\partial_{\zeta_1}^2+\partial_{\zeta_2}^2$, the physical
Laplace--Beltrami weights are
\begin{equation}
  w_{i,\Delta_{\mathcal M}}
  =
  \rho_i^{-2}\widetilde w_{i,\mathcal D}.
  \label{eq:tp-laplacian-row}
\end{equation}
If $\widetilde w_{i,1}$ and $\widetilde w_{i,2}$ are the weights for
$\partial_{\zeta_1}$ and $\partial_{\zeta_2}$, then the weights for any Cartesian component $\alpha\in\{x,y,z\}$ of the surface gradient are
\begin{equation}
  w_{i,\partial_{\mathcal M,\alpha}}
  =
  \rho_i^{-1}
  \left[
    (\bm e_\alpha\cdot\bm t_{i,1})\widetilde w_{i,1}
    +(\bm e_\alpha\cdot\bm t_{i,2})\widetilde w_{i,2}
  \right].
  \label{eq:tp-gradient-row}
\end{equation}
Here $\bm e_x,\bm e_y,\bm e_z$ are the Cartesian coordinate vectors.

These local weights define the global RBF-FD operators.  For a nodal vector
$U=(u(\bm x_1),\ldots,u(\bm x_N))^T$, we set
\begin{equation}
  L_{i,i_j}=(w_{i,\Delta_{\mathcal M}})_j,
  \qquad
  (G_\alpha)_{i,i_j}=(w_{i,\partial_{\mathcal M,\alpha}})_j,
  \qquad j=1,\ldots,n_s,
  \label{eq:tp-global-operator-entries}
\end{equation}
with all remaining entries in row $i$ equal to zero.  Hence
\begin{equation}
  (LU)_i
  =
  \sum_{j=1}^{n_s}(w_{i,\Delta_{\mathcal M}})_j U_{i_j},
  \qquad
  (G_\alpha U)_i
  =
  \sum_{j=1}^{n_s}(w_{i,\partial_{\mathcal M,\alpha}})_j U_{i_j}.
  \label{eq:tp-global-operator-action}
\end{equation}
Thus $L$ approximates $\Delta_{\mathcal M}$ and
$G_x,G_y,G_z$ approximate the Cartesian components of $\nabla_{\mathcal M}$.
Section~\ref{sec:methods} constructs these operators on the evolving node set
at each time level.

\subsection{Hyperviscosity on manifolds}
\label{subsec:overview-hyperviscosity}

RBF-FD approximations of first-order surface derivatives can contain spurious
growing modes.  We use the quasi-analytical manifold hyperviscosity
construction of \cite{ShankarWrightNarayan2020}, which augments each Cartesian
surface derivative by a high-order Laplace--Beltrami term.  For a positive
integer $k$, define
\begin{equation}
  H_k z=L^kz,
  \label{eq:hv-poly-laplacian}
\end{equation}
and consider
\[
  G_\alpha z+\gamma_{\alpha,k}H_k z,
  \qquad \alpha\in\{x,y,z\}.
\]
The coefficient $\gamma_{\alpha,k}$ is estimated from the responses of the
discrete first- and high-order operators to a high-frequency probe. Let $h$ denote the characteristic nodal spacing used in the hyperviscosity
model, set $\kappa=2/h$, and define
\begin{equation}
  \varphi_i
  =
  \exp\!\left(\mathrm{i}\kappa(x_i+y_i+z_i)\right),
  \label{eq:hv-probe}
\end{equation}
where $\mathrm{i}^2=-1$.  Let
\[
  P_i=I-\bm n_i\bm n_i^T,
  \qquad
  \bm a=(1,1,1)^T.
\]
Since $\nabla_{\mathcal M}=P\nabla$, the exact surface gradient of the probe at
$\bm x_i$ is
\begin{equation}
  \nabla_{\mathcal M}\varphi(\bm x_i)
  =
  \mathrm{i}\kappa\varphi_i P_i\bm a.
  \label{eq:hv-exact-surface-gradient}
\end{equation}
We denote its $\alpha$th Cartesian component, sampled over all nodes, by
$g_{\alpha,\mathcal M}$.

For each $G_\alpha$, let
\begin{equation}
  \tau_\alpha
  \approx
  \max_{\lambda\in\sigma(G_\alpha)}\operatorname{Re}\lambda
  \label{eq:hv-tau-definition}
\end{equation}
be the estimated largest real part of its spectrum.  The measured growth
exponent is
\begin{equation}
  q_\alpha
  =
  \frac{
    \log\|G_\alpha\varphi-g_{\alpha,\mathcal M}\|_2
    -\log\tau_\alpha-\log\|\varphi\|_2
  }{\log\kappa}.
  \label{eq:hv-growth-exponent}
\end{equation}
The response of the discrete poly-Laplacian to the same probe is
\begin{equation}
  \eta_i
  =
  \left|
    \operatorname{Re}
    \left(
      \frac{(L^k\varphi)_i}
      {(-1)^k3^k\kappa^{2k}\varphi_i}
    \right)
  \right|,
  \qquad
  \overline\eta=\frac1N\sum_{i=1}^N\eta_i.
  \label{eq:hv-poly-laplacian-response}
\end{equation}
Define
\begin{equation}
  r_\alpha
  =
  \tau_\alpha
  2^{q_\alpha-2k}
  h^{2k-q_\alpha}.
  \label{eq:raw-gamma-components}
\end{equation}
The componentwise hyperviscosity coefficients are
\begin{equation}
  \boxed{
  \gamma_{\alpha,k}
  =
  3^{-k}(-1)^{1-k}
  \frac{r_\alpha}{\overline\eta}.}
  \label{eq:gamma-full}
\end{equation}
For target spatial order $\xi$, we choose
\begin{equation}
  k
  =
  \max\left\{
    k_{\min},
    \left\lceil\frac{\xi+\max\{q_x,q_y,q_z\}}{2}\right\rceil
  \right\},
  \label{eq:hv-power}
\end{equation}
which gives $2k-\max\{q_x,q_y,q_z\}\geq\xi$.  On a fixed surface these spectral
quantities can be computed once.  On a moving surface they change with the
geometry; Section~\ref{subsec:hyperviscosity-and-update} develops a geometric
update that reduces how often the spectral quantities must be recomputed.

\section{A Lagrangian--Eulerian RBF-FD method for moving surfaces}
\label{sec:methods}
We discretize \eqref{eq:moving-surface-pde} using a Lagrangian--Eulerian
approach.  The manifold and material derivative are treated in a Lagrangian
fashion: the nodes move with $\bm v$, and between rearrangements each nodal
index follows the same material point.  At each time level, we approximate the
remaining surface differential operators on the current embedded point cloud
using the tangent-plane RBF-FD construction of
Section~\ref{subsec:overview-tangent-rbffd}.

The method couples material time stepping, surface differentiation, geometric
quadrature, and occasional node rearrangement.  We state the particular
choices used here and, where relevant, the requirements needed to replace
them by alternatives of comparable accuracy.

Algorithm~\ref{alg:moving-surface-step} summarizes one time step.  The
subsections below define each operation and the criteria used for operator
updates, stabilization, conservation, and rearrangement.

\begin{algorithm}[t]
\caption{One time step of the moving-surface RBF-FD method}
\label{alg:moving-surface-step}
\textbf{Given:} Material history
$\{X^{n+1-j},C^{n+1-j}\}_{j=1}^q$, a geometric representation of
$\mathcal M(t)$, and the data needed to advance the target mass.\par
\textbf{Output:} Updated solution history through $t_{n+1}$.
\begin{enumerate}
  \item Advance the material nodes to $X^{n+1}$ and approximate the nodal
  surface velocity $V^{n+1}$ from their trajectories; see
  Section~\ref{subsec:bdf-lagrangian-eulerian}.

  \item Evaluate the surface geometry and normals, and construct the quadrature
  weights on $X^{n+1}$; see Sections~\ref{subsec:global-parametric-sbf-models}
  and~\ref{subsec:mass-conservation-projection}.

  \item Form the tangent-plane RBF-FD operators $L^{n+1}$ and
  $G_\alpha^{n+1}$, $\alpha\in\{x,y,z\}$, using direct local solves or the
  defect-correction update in Section~\ref{subsec:moving-tangent-plane-stencils}.

  \item Form the discrete surface divergence from
  \eqref{eq:discrete-surface-divergence}.  Compute or geometrically update the
  hyperviscosity coefficients as described in
  Section~\ref{subsec:hyperviscosity-and-update}, and form the stabilized
  divergence \eqref{eq:stabilized-surface-divergence}.

  \item Advance the target mass $M_\star^{n+1}$ according to
  Section~\ref{subsec:mass-conservation-projection}, using
  \eqref{eq:exact-discrete-mass-target} for a prescribed discrete mass,
  constant mass for a source-free conservative problem, or
  \eqref{eq:discrete-mass-target} when a mass rate is supplied.

  \item Solve the BDF system \eqref{eq:bdf-update} for the unprojected value
  $\widetilde C^{n+1}$, then apply the conservation projection
  \eqref{eq:mass-conservation-projection} to obtain $C^{n+1}$.

  \item If the rearrangement criterion based on
  \eqref{eq:spacing-uniformity} is satisfied, reconstruct the BDF history as
  described in Section~\ref{subsec:lagrangian-rearrangement-history}:
  \begin{enumerate}
    \item redistribute the nodes using \eqref{eq:fps-remap};
    \item trace the new material labels backward using \eqref{eq:sl-backtrace};
    \item interpolate the solution history using \eqref{eq:history-transfer};
    \item recompute quadrature weights and surface operators on the rearranged
    nodes, and apply \eqref{eq:mass-conservation-projection} to each
    reconstructed history value.
  \end{enumerate}
\end{enumerate}
\end{algorithm}

\subsection{Material time discretization}
\label{subsec:bdf-lagrangian-eulerian}
\paragraph{Material trajectories}
Let $
  X^n=\{\bm x_j^n\}_{j=1}^N$,$
  \bm x_j^n=\Phi_{t_n}(\bm x_j^0)$
where $\Phi_t$ is the flow map generated by $\bm v$.  We write
$C_j^n\approx c(\bm x_j^n,t_n)$ and collect the nodal values in
$C^n\in\mathbb R^N$.  Since $j$ remains a material label between
rearrangements, backward differentiation formulas (BDF) applied to the
sequence $\{C^n\}$ directly approximate $D_t c$.

The PDE discretization also needs the surface velocity at the nodes.  We
approximate it from the nodal trajectories using the same BDF startup
sequence as the scalar:
\[
  V^1=\frac{X^1-X^0}{\Delta t},\qquad
  V^2=\frac{3X^2-4X^1+X^0}{2\Delta t},
\]
and, after startup,
\[
  V^{n+1}
  =
  \frac{11X^{n+1}-18X^n+9X^{n-1}-2X^{n-2}}{6\Delta t}.
\]
Here $V^{n+1}\in\mathbb R^{N\times 3}$, with its columns denoted in order as as $v_x^{n+1}$, $v_y^{n+1}$, and $v_z^{n+1}$.

\paragraph{Surface divergence}
At $t_{n+1}$, let $
  L^{n+1}$, $G_x^{n+1}$, $G_y^{n+1}$, $G_z^{n+1}$
denote the RBF-FD approximations of $\Delta_{\mathcal M}$ and the Cartesian
components of $\nabla_{\mathcal M}$ on $X^{n+1}$.  The unstabilized surface
divergence is
\begin{equation}
  d_v^{n+1}
  =
  G_x^{n+1}v_x^{n+1}
  +G_y^{n+1}v_y^{n+1}
  +G_z^{n+1}v_z^{n+1}.
  \label{eq:discrete-surface-divergence}
\end{equation}
Section~\ref{subsec:hyperviscosity-and-update} adds a coordinate-wise
correction $h_v^{n+1}$ and uses
\begin{equation}
  \widetilde d_v^{n+1}=d_v^{n+1}+h_v^{n+1}.
  \label{eq:stabilized-surface-divergence}
\end{equation}

\paragraph{Backward differentiation formula}
For order $q\in\{1,2,3\}$, define the BDF coefficients
$\{a_j^{(q)}\}_{j=0}^q$ by
\[
  \frac{1}{\Delta t}\sum_{j=0}^q a_j^{(q)}C^{n+1-j}
  \approx D_t c(t_{n+1}).
\]
The BDF1--3 coefficient vectors are
\[
  (1,-1),\qquad
  \left(\frac32,-2,\frac12\right),\qquad
  \left(\frac{11}{6},-3,\frac32,-\frac13\right),
\]
and we extrapolate the explicit geometric term with
\[
  \widehat C^{n+1}
  =
  \begin{cases}
    C^n, & q=1,\\
    2C^n-C^{n-1}, & q=2,\\
    3C^n-3C^{n-1}+C^{n-2}, & q=3.
  \end{cases}
\]
Let $\mathcal F^{n+1}$ denote the prescribed explicit source, including the
reaction contribution when reaction and forcing are supplied together.  The
unprojected BDF step is
\begin{equation}
  \left(a_0^{(q)}I-\Delta t\,\nu L^{n+1}\right)\widetilde C^{n+1}
  =
  -\sum_{j=1}^q a_j^{(q)}C^{n+1-j}
  +\Delta t\left[
    \mathcal F^{n+1}
    -\widehat C^{n+1}\circ\widetilde d_v^{n+1}
  \right],
  \label{eq:bdf-update}
\end{equation}
where $\circ$ denotes pointwise multiplication.  We use BDF1 and BDF2 for
startup and BDF3 thereafter.  Diffusion is implicit; the geometric-divergence
term is explicit through $\widehat C^{n+1}$.  The same moving-surface construction can be paired with another multistep or
implicit--explicit time discretization, provided the method advances values on
material labels and supplies the required solution history after
rearrangement.

\subsection{Updating RBF-FD operators on the moving surface}
\label{subsec:moving-tangent-plane-stencils}

\paragraph{Local systems on the current surface}
At each time level we form the tangent-plane RBF-FD systems of
Section~\ref{subsec:overview-tangent-rbffd} on the current point cloud.  For
stencil center $i$, let $\mathcal A_i^{n+1}$ denote the current augmented
RBF-FD matrix and let $B_i^{n+1}$ collect the right-hand sides needed for the
Laplacian and gradient rows.  The local solve gives the scaled-coordinate
weights, which are converted to physical derivatives using
\eqref{eq:tp-laplacian-row}--\eqref{eq:tp-gradient-row}.

The straightforward approach factors each $\mathcal A_i^{n+1}$ anew.  For
smooth surface motion, consecutive local systems are close, so we reuse the
previous factorization whenever the stencil membership is unchanged.

\paragraph{Local defect correction}
For an unchanged stencil, initialize
\begin{equation}
  W_i^{(0)}=(\mathcal A_i^n)^{-1}B_i^{n+1},
  \label{eq:defect-initial-guess}
\end{equation}
and iterate
\begin{equation}
  W_i^{(r+1)}
  =
  W_i^{(r)}
  +(\mathcal A_i^n)^{-1}
  \left(B_i^{n+1}-\mathcal A_i^{n+1}W_i^{(r)}\right),
  \qquad r=0,\ldots,r_{\max}-1.
  \label{eq:defect-correction}
\end{equation}
We stop the iteration when
\begin{equation}
  \frac{\|B_i^{n+1}-\mathcal A_i^{n+1}W_i^{(r+1)}\|}
       {\|B_i^{n+1}\|}
  \leq \varepsilon_{\mathrm{dc}}.
  \label{eq:defect-acceptance}
\end{equation}
If the stencil membership changes, the old factors are unavailable, or the
criterion \eqref{eq:defect-acceptance} is not met, we solve the current system
directly and retain its factors.  Defect correction changes only the linear
algebra used to obtain the current RBF-FD weights.  Direct local solves or
other factor-reuse strategies may be used instead without changing the spatial
discretization.

\subsection{Moving-surface hyperviscosity update}
\label{subsec:hyperviscosity-and-update}

\paragraph{Stabilization of the surface divergence}
At selected time levels we compute the manifold hyperviscosity parameters
directly from the current operators using
Section~\ref{subsec:overview-hyperviscosity}.  We apply the correction to the
three velocity components that enter
\eqref{eq:discrete-surface-divergence}.  Define
\begin{equation}
  H_k^{n+1}z=(L^{n+1})^kz,
  \label{eq:moving-poly-laplacian}
\end{equation}
and
\begin{equation}
  h_v^{n+1}
  =
%  \sum_{\ell=1}^3
%  \gamma_{\ell,k}^{n+1}H_k^{n+1}v_\ell^{n+1}.
  H_k^{n+1}\left(\gamma_{x,k}^{n+1}v_x^{n+1}+\gamma_{y,k}^{n+1}v_y^{n+1}+\gamma_{z,k}^{n+1}v_z^{n+1}\right)
  \label{eq:hyperviscosity-term}
\end{equation}
We evaluate $H_k^{n+1}$ by $k$ sparse matrix--vector products; we never form
$(L^{n+1})^k$.  Equation~\eqref{eq:bdf-update} then contains the correction
$-\widehat C^{n+1}\circ h_v^{n+1}$.

Subsequent steps use the stabilized divergence
$\widetilde d_v^{n+1}$ only through \eqref{eq:bdf-update}.  Another stable
discretization of $\nabla_{\mathcal M}\!\cdot\bm v$ may therefore be
substituted here without changing the time discretization or conservation
projection.

\paragraph{Geometric update of the coefficients}
Direct evaluation of the coefficients requires three largest-real-part
eigenvalue estimates and the growth exponents in
\eqref{eq:hv-growth-exponent}.  We avoid repeating these spectral calculations
at every time step by updating the raw contributions $r_\ell$ from the
changing geometry.  Holding the most recently computed
$q_\alpha$ fixed, the exact ratio would be
\begin{equation}
  r_\alpha^{n+1}
  =
  r_\alpha^n
  \left(\frac{\tau_\alpha^{n+1}}{\tau_\alpha^n}\right)
  \left(\frac{h^{n+1}}{h^n}\right)^{2k-q_\alpha^n}.
  \label{eq:raw-ratio-exact}
\end{equation}
We estimate the scale change from
\begin{equation}
  K_\alpha^n
  =
  \operatorname{rms}\!\left(L^nX_\alpha^n\right),
  \qquad \alpha=x,y,z,
  \label{eq:coordinate-curvature-proxy}
\end{equation}
where $X_\alpha^n$ is the $\alpha$ coordinate function of the point cloud.
Since $L^nX^n$ approximates $\Delta_{\mathcal M}X$, $K_x^n$, $K_y^n$, $K_z^n$ provide coordinate-wise geometric scales.

Under a uniform scaling by $s$,
$h^{n+1}/h^n=s$, $\tau_\alpha^{n+1}/\tau_\alpha^n\approx s^{-1}$, and
$K_\alpha^{n+1}\approx s^{-1}K_\alpha^n$.  Substitution in
\eqref{eq:raw-ratio-exact} gives the geometric update
\begin{equation}
  \widetilde r_\alpha^{\,n+1}
  =
  r_\alpha^n
  \left(
    \frac{K_\alpha^n}{K_\alpha^{n+1}}
  \right)^{2k-q_\alpha^n-1}.
  \label{eq:raw-gamma-update}
\end{equation}
For a general smooth deformation we apply the same relation componentwise.
We recompute $\overline\eta^{\,n+1}$ from $L^{n+1}$ using the probe in
\eqref{eq:hv-poly-laplacian-response}, and set
\begin{equation}
  \widetilde\gamma_{\alpha,k}^{\,n+1}
  =
  3^{-k}(-1)^{1-k}
  \frac{\widetilde r_\alpha^{\,n+1}}
       {\overline\eta^{\,n+1}},
  \qquad \alpha\in\{x,y,z\}.
  \label{eq:gamma-predictor}
\end{equation}

\paragraph{Recomputation criterion}
We use the geometrically updated coefficient vector when
\[
  \frac{
    \|\widetilde{\bm\gamma}_k^{\,n+1}-\bm\gamma_k^n\|_2
  }{
    \max(\|\bm\gamma_k^n\|_2,\epsilon)
  }
  \leq \varepsilon_\gamma,
\]
provided $K_x^{n+1}$, $K_y^{n+1}$, and $K_z^{n+1}$ are finite and positive, and the prescribed
maximum number of consecutive geometric updates has not been reached.
Otherwise, we recompute the eigenvalue estimates, growth exponents, and
coefficients from \eqref{eq:gamma-full}.  Whenever these quantities are
recomputed, $k$ is selected by \eqref{eq:hv-power}; the previous value serves
as the lower bound, so $k$ can increase but does not decrease.  We use
$\varepsilon_\gamma=0.05$ and recompute the spectral quantities after at most
five geometric coefficient updates.

\subsection{Global parametric geometry models and normals}
\label{subsec:global-parametric-sbf-models}

\paragraph{Geometric representation}
The method requires a geometric representation of the moving surface from
which we can evaluate the surface map, parameter tangents, normals, and surface
Jacobian.  Rearrangement additionally requires interpolation in the material
coordinates.  These quantities may come from an analytic parameterization or
from a numerical geometric model.  For the latter, we use global radial basis
function models on fixed material coordinates.

\paragraph{Sphere-like surfaces}
For a surface homeomorphic to the sphere, let $\bm u\in\mathbb S^2$ be a
material coordinate and $\bm X(\bm u,t)\in\mathbb R^3$ the embedded surface.
Choose control sites $\{\bm u_j\}_{j=1}^M\subset\mathbb S^2$ and define
\[
  d_{\mathbb S}(\bm u,\bm u_j)=\|\bm u-\bm u_j\|_2.
\]
With a polyharmonic spline (PHS) kernel $\phi(r)=r^{2m+1}$, we approximate the
geometry by the spherical basis function (SBF) model
\begin{equation}
  \bm X_M(\bm u,t)
  =
  \sum_{j=1}^M \bm a_j(t)\,
  \phi\!\left(d_{\mathbb S}(\bm u,\bm u_j)\right),
  \label{eq:spherical-sbf-geometry}
\end{equation}
with $
  \bm X_M(\bm u_i,t)=\bm X(\bm u_i,t)$, 
  $i=1,\ldots,M.$
The control sites are fixed in material space, so we factor the interpolation
matrix once and update only its right-hand side as the surface moves.  We use
a deterministic farthest-point subset for the controls; another well-spaced
control set gives the same construction.

\paragraph{Torus-like surfaces}
For material coordinates $(\theta,\varphi)\in\mathbb T^2=[0,2\pi)^2$, define
\[
  \bm q(\theta,\varphi)
  =
  (\cos\theta,\sin\theta,\cos\varphi,\sin\varphi)\in\mathbb R^4
\]
and the periodic chordal distance
\begin{equation}
  d_{\mathbb T}\bigl((\theta,\varphi),(\theta_j,\varphi_j)\bigr)
  =
  \left\|
    \bm q(\theta,\varphi)-\bm q(\theta_j,\varphi_j)
  \right\|_2.
  \label{eq:toroidal-distance}
\end{equation}
The corresponding toroidal basis function (TBF) model is
\begin{equation}
  \bm X_M(\theta,\varphi,t)
  =
  \sum_{j=1}^M \bm a_j(t)\,
  \phi\!\left(
    d_{\mathbb T}\bigl((\theta,\varphi),(\theta_j,\varphi_j)\bigr)
  \right).
  \label{eq:toroidal-basis-geometry}
\end{equation}
The four-dimensional embedding only enforces periodicity of the material
coordinates.

\paragraph{Normals and scalar interpolation}
For either parameterization, differentiating $\bm X_M$ gives the tangents.  If
$(\alpha_1,\alpha_2)$ is a local material chart, we use
\begin{equation}
  \bm n_M(\alpha,t)
  =
  \frac{
    \partial_{\alpha_1}\bm X_M(\alpha,t)
    \times
    \partial_{\alpha_2}\bm X_M(\alpha,t)
  }{
    \left\|
      \partial_{\alpha_1}\bm X_M(\alpha,t)
      \times
      \partial_{\alpha_2}\bm X_M(\alpha,t)
    \right\|_2
  }.
  \label{eq:sbf-normal-model}
\end{equation}
The same basis gives a scalar interpolant:
\begin{equation}
  I_M^n g(\alpha)
  =
  \sum_{j=1}^M b_j^n\,
  \phi\!\left(d(\alpha,\alpha_j)\right),
  \qquad
  I_M^n g(\alpha_i)=g(\alpha_i),
  \label{eq:global-sbf-scalar-interpolant}
\end{equation}
where $d=d_{\mathbb S}$ or $d_{\mathbb T}$.  The local RBF-FD operators remain
those of Section~\ref{subsec:overview-tangent-rbffd}; the global model supplies
geometry and, when needed, interpolation of the solution history.

For a reduced geometry model, we choose enough controls that its normal error
stays below the target PDE error.  With material fill distance
\[
H_M=\max_{\alpha_i}\min_{1\leq j\leq M}d(\alpha_i,\alpha_j),
\]
normal error $O(H_M^{q_g})$, and RBF-FD error $O(h^\xi)$, we use the balancing
criterion
\begin{equation}
  H_M^{q_g}\lesssim c_{\rm geom} h^\xi,
  \label{eq:sbf-control-balance}
\end{equation}
where $c_{\rm geom}$ is a constant parameter. For the degree-seven PHS-SBF geometry interpolant used here with $\phi(r) = r^7$, we take $q_g =8$  and $c_{\rm geom} = 0.1$. We select the number \(M\) of geometry control
sites so that
\[
H_M^8 \le 0.1h^\xi.
\]
\subsection{Geometry-derived quadrature and mass projection}
\label{subsec:mass-conservation-projection}

\paragraph{Material quadrature}
The mass projection requires a surface quadrature rule that evolves with the
geometry.  Let $\widehat{\mathcal M}$ be the fixed material manifold with
reference measure $d\mu(a)$, and let $
  \bm X_M(\cdot,t):\widehat{\mathcal M}\rightarrow\mathcal M(t)
$
be the geometry map.  For material labels $a_i\in\widehat{\mathcal M}$, choose
a positive quadrature rule
\begin{equation}
  \int_{\widehat{\mathcal M}}\widehat g(a)\,d\mu(a)
  \approx
  \sum_{i=1}^N\widehat\omega_i\,\widehat g(a_i).
  \label{eq:material-quadrature}
\end{equation}
If
\[
  dS_t(\bm X_M(a,t))=J_M(a,t)\,d\mu(a),
\]
then the physical weights are
\begin{equation}
  w_i^n=\widehat\omega_iJ_i^n,
  \qquad
  J_i^n=J_M(a_i,t_n).
  \label{eq:geometry-quadrature-weights}
\end{equation}
For a two-parameter chart $\alpha=(\alpha_1,\alpha_2)$,
\begin{equation}
  J_M(\alpha,t)
  =
  \left\|
    \partial_{\alpha_1}\bm X_M(\alpha,t)
    \times
    \partial_{\alpha_2}\bm X_M(\alpha,t)
  \right\|_2,
  \label{eq:chart-surface-jacobian}
\end{equation}
while for $\alpha=\bm u\in\mathbb S^2$ and an orthonormal basis
$\bm e_1,\bm e_2\in T_{\bm u}\mathbb S^2$,
\begin{equation}
  J_M(\bm u,t)
  =
  \left\|
    D\bm X_M(\bm u,t)\bm e_1(\bm u)
    \times
    D\bm X_M(\bm u,t)\bm e_2(\bm u)
  \right\|_2.
  \label{eq:spherical-surface-jacobian}
\end{equation}

The reference weights $\widehat\omega_i$ depend on how the material sites are
generated.  Equal weights are natural for quasi-uniform sampling with respect
to $d\mu$.  If the initial nodes are uniform in physical area, we instead use
\begin{equation}
  \widehat\omega_i
  =
  \frac{|\mathcal M(0)|}{N J_i^0},
  \qquad
  J_i^0=J_M(\alpha_i,0),
  \label{eq:initial-area-uniform-material-weights}
\end{equation}
which gives $w_i^0=|\mathcal M(0)|/N$.

\paragraph{Direct surface quadrature}
When a convenient material quadrature is unavailable, we use vertex areas on
a surface triangulation $\mathcal T^n$:
\begin{equation}
  w_i^n
  =
  \frac13
  \sum_{\tau\in\mathcal T^n:\,i\in\tau}|\tau|.
  \label{eq:vertex-area-quadrature}
\end{equation}
The triangulation is needed only when these weights are computed.
We use \eqref{eq:vertex-area-quadrature} for this case.  Any consistent
positive surface quadrature may be used in the conservation step below.

After rearrangement, we construct quadrature weights for the new labels rather
than transporting the old vector of weights.  For material quadrature this
means recomputing
\[
  \widetilde w_i^n
  =
  \widetilde{\widehat\omega}_i
  J_M(\widetilde \alpha_i,t_n),
\]
and for direct surface quadrature it means recomputing the rule on the new
point cloud.

\paragraph{Discrete mass balance}
Define
\begin{equation}
  Q_h^n[g]=\sum_{i=1}^Nw_i^ng_i^n,
  \qquad
  \|g\|_{h,n}^2=Q_h^n[g^2].
  \label{eq:geometry-quadrature-functional}
\end{equation}
For an unprojected BDF solution $\widetilde C^{n+1}$,
\[
  M_h^{n+1}(\widetilde C)
  =Q_h^{n+1}[\widetilde C^{n+1}]
  =(w^{n+1})^T\widetilde C^{n+1}.
\]
For \eqref{eq:moving-surface-pde}, the transport theorem gives
\[
  M(t)=\int_{\mathcal M(t)}c\,dS,
  \qquad
  \frac{dM}{dt}
  =
  \int_{\mathcal M(t)}(\lambda c+f)\,dS.
\]
We evolve the target mass directly from the physical balance law; the full
pointwise residual does not enter the mass update.

\paragraph{Target mass}
We consider three constructions for the target mass.  Let
\[
  M_h^0=Q_h^0[C^0].
\]
If an exact discrete mass $M_{\mathrm{ex},h}(t_n)$ is available, we preserve
the initial quadrature offset and set
\begin{equation}
  M_\star^n
  =
  M_h^0
  +M_{\mathrm{ex},h}(t_n)
  -M_{\mathrm{ex},h}(t_0).
  \label{eq:exact-discrete-mass-target}
\end{equation}
For a source-free conservative problem, $M_\star^n=M_h^0$.  If the physical
mass rate is available, define
\begin{equation}
  R_h^n
  =
  \begin{cases}
    R(t_n), & \text{for a supplied scalar mass-rate function},\\
    Q_h^n[s^n], & \text{for a supplied nodal balance source }s^n,
  \end{cases}
  \label{eq:discrete-mass-rate}
\end{equation}
with the physical choice
$R(t)=\int_{\mathcal M(t)}(\lambda c+f)\,dS$ for
\eqref{eq:moving-surface-pde}.  We accumulate the rate with
\begin{align}
  I_h^0 &= 0, \nonumber\\
  I_h^1 &= \frac{\Delta t}{2}(R_h^0+R_h^1), \nonumber\\
  I_h^2 &= \frac{\Delta t}{3}(R_h^0+4R_h^1+R_h^2), \nonumber\\
  I_h^n &= I_h^{n-1}
  +\frac{\Delta t}{12}(5R_h^n+8R_h^{n-1}-R_h^{n-2}),
  \qquad n\geq3,
  \label{eq:multistep-mass-integral}
\end{align}
and set
\begin{equation}
  M_\star^n=M_h^0+I_h^n.
  \label{eq:discrete-mass-target}
\end{equation}
The startup uses trapezoidal and Simpson quadrature; the subsequent update is
the third-order endpoint-inclusive Adams--Moulton rule.  A different mass-rate
integrator can be used provided it matches the accuracy required by the time
discretization.

\paragraph{Mass projection}
Given $M_\star^{n+1}$, we project by a constant shift:
\begin{equation}
  C^{n+1}
  =
  \widetilde C^{n+1}
  +\delta^{n+1}\bm 1,
  \qquad
  \delta^{n+1}
  =
  \frac{M_\star^{n+1}-(w^{n+1})^T\widetilde C^{n+1}}
       {(w^{n+1})^T\bm 1}.
  \label{eq:mass-conservation-projection}
\end{equation}
The constant shift in \eqref{eq:mass-conservation-projection} minimizes the
correction in the quadrature-weighted discrete $L^2$ norm subject to the
prescribed mass.  We apply the same projection to any
history values created by rearrangement, using the quadrature rule at the
corresponding time level.  The constant shift leaves the RBF-FD operators and implicit diffusion system
unchanged.  Other conservative projections may be used with the same
quadrature and target mass.

\subsection{Lagrangian rearrangement and semi-Lagrangian history reconstruction}
\label{subsec:lagrangian-rearrangement-history}

A material point cloud inherits the deformation of the surface flow.  Over
time, nodes can cluster or separate enough to degrade the local RBF-FD
stencils.  We monitor the node distribution and occasionally replace the material labels
by a better-spaced set.  The BDF history must then be reconstructed on the new
labels so that the multistep scheme can continue without restarting.

\paragraph{Rearrangement criterion and redistribution}
We monitor the spacing-uniformity ratio
\begin{equation}
  Q(X)
  =
  \frac{\max_i\delta_i}{\min_i\delta_i},
  \qquad
  \delta_i=\min_{j\neq i}\|\bm x_i-\bm x_j\|_2.
  \label{eq:spacing-uniformity}
\end{equation}
We rearrange the nodes when $Q(X^n)$ exceeds a prescribed tolerance, or when a
short extrapolation predicts that the tolerance will be exceeded within the
next few time steps.  A different geometric measure may be used here; the subsequent rearrangement
procedure only needs a criterion for loss of point-set quality.

Suppose rearrangement occurs at time $t_n$.  Let $\Theta$ be a material parameter
domain for $\bm X(\bm\theta,t)$.  We choose new material parameters
\[
  \widetilde\Theta^n
  =\{\widetilde{\bm\theta}_j^n\}_{j=1}^N
\]
that are quasi-uniform on $\mathcal M(t_n)$.  We draw candidate material coordinates according to the surface-area density
\begin{equation}
  J(\bm\theta,t_n)
  =
  \left\|
    \partial_{\theta_1}\bm X(\bm\theta,t_n)
    \times
    \partial_{\theta_2}\bm X(\bm\theta,t_n)
  \right\|_2
  \label{eq:surface-area-density}
\end{equation}
and apply farthest-point sampling in the embedded metric:
\begin{equation}
  \widetilde{\bm\theta}_{k+1}^n
  \in
  \arg\max_{\bm\theta\in\Theta_{\rm cand}}
  \min_{1\leq j\leq k}
  \left\|
    \bm X(\bm\theta,t_n)
    -\bm X(\widetilde{\bm\theta}_j^n,t_n)
  \right\|_2.
  \label{eq:fps-remap}
\end{equation}
The new cloud is
\[
  \widetilde X^n
  =
  \{\bm X(\widetilde{\bm\theta}_j^n,t_n)\}_{j=1}^N.
\]
We use farthest-point sampling for redistribution.  Any resampling method that
produces a sufficiently regular point set may be used instead.

\paragraph{Backward characteristic tracing}
The next BDF3 step requires solution values at $t_n,t_{n-1},t_{n-2}$ on the
same material labels.  We obtain the older locations by tracing each new label
backward along the material flow.  Let
\[
  D_\theta\bm X
  =
  \begin{bmatrix}
    \partial_{\theta_1}\bm X & \partial_{\theta_2}\bm X
  \end{bmatrix},
  \qquad
  G_\theta=(D_\theta\bm X)^TD_\theta\bm X.
\]
When the motion is specified in ambient coordinates, the material-coordinate
velocity is
\begin{equation}
  \bm a(\bm\theta,t)
  =
  G_\theta^{-1}(D_\theta\bm X)^T
  \left[
    \bm v(\bm X(\bm\theta,t),t)-\partial_t\bm X(\bm\theta,t)
  \right].
  \label{eq:material-coordinate-velocity}
\end{equation}
If the material-coordinate velocity is already available, we use it directly.
For $r=1,2$, the backtraced labels satisfy
\begin{equation}
  \frac{d\bm\theta_j}{d\sigma}
  =
  \bm a(\bm\theta_j(\sigma),\sigma),
  \qquad
  \bm\theta_j(t_n)=\widetilde{\bm\theta}_j^n,
  \qquad
  \sigma\in[t_{n-r},t_n],
  \label{eq:sl-backtrace}
\end{equation}
with
\[
  \widetilde{\bm x}_j^{n-r}
  =
  \bm X(\bm\theta_j(t_{n-r}),t_{n-r}).
\]
We use fourth-order Runge--Kutta substepping for these characteristic
problems.  Any trajectory integrator whose error is small relative to the
spatial and temporal discretization errors may be used instead.

\paragraph{History interpolation}
Let $I_h^{n-r}$ be a high-order surface interpolant built from
$(X^{n-r},C^{n-r})$.  For $r=0,1,2$, set
\begin{equation}
  \widetilde C_j^{n-r}
  =
  I_h^{n-r}C^{n-r}
  \bigl(\widetilde{\bm x}_j^{n-r}\bigr).
  \label{eq:history-transfer}
\end{equation}
We may use either the local tangent-plane RBF interpolant or the global SBF/TBF
interpolant \eqref{eq:global-sbf-scalar-interpolant}; we use the former for parsimony.  Any surface interpolant of sufficient accuracy may be used for this history
reconstruction.

The new history
\[
  (\widetilde X^n,\widetilde C^n),\qquad
  (\widetilde X^{n-1},\widetilde C^{n-1}),\qquad
  (\widetilde X^{n-2},\widetilde C^{n-2})
\]
replaces the old one.  We recompute or update the differentiation operators on
$\widetilde X^n$, recompute the quadrature weights on the new labels, and
apply \eqref{eq:mass-conservation-projection} to each interpolated history
value.  The BDF discretization then continues on the new material labels without a restart.

Algorithm~\ref{alg:moving-surface-step} now provides the top-level view of the
complete time step.  The remaining operation is the global sparse solve in
\eqref{eq:bdf-update}.  Section~\ref{sec:global-bdf-linear-algebra}
describes the defect-correction iteration and preconditioner reuse used for
that system.

%\section{Linear algebra for moving-surface BDF systems}
\section{Solving the moving-surface BDF systems}
\label{sec:global-bdf-linear-algebra}

The discretization above defines a sequence of sparse BDF systems.  The
linear algebra used to solve these systems is an additional acceleration
layer, separate from the local RBF-FD stencil construction.  At each time step
we solve
\begin{equation}
  A^{n+1}C^{n+1}=b^{n+1},
  \qquad
  A^{n+1}=I-\beta_q\Delta t\,\nu L^{n+1}.
  \label{eq:global-bdf-system}
\end{equation}
Here \(\beta_q\) is the diffusion coefficient obtained after dividing the
BDF\(q\) formula by its leading coefficient; for the BDF1, BDF2, and BDF3
steps used here,
\[
  \beta_1=1,\qquad \beta_2=\frac{2}{3},\qquad
  \beta_3=\frac{6}{11}.
\]
The right-hand side \(b^{n+1}\) contains the previous BDF values, forcing,
reaction, explicit geometric-divergence term, and explicit hyperviscosity
term from \eqref{eq:bdf-update}.  Although \eqref{eq:global-bdf-system} is a
global sparse problem rather than a local stencil problem, consecutive
matrices are close when the surface moves smoothly.  We exploit this by using
a global analogue of the local defect correction in
\eqref{eq:defect-correction}.

Let
\[
  M_f=P_f^TL_fU_f\approx A^f
\]
denote a frozen incomplete factorization of a recent BDF matrix \(A^f\), with
the permutation convention \(L_fU_f\approx P_fA^f\).  The initial guess is the
explicit BDF extrapolate
\[
  C^{(0)}=\widehat C^{n+1}.
\]
Before launching a Krylov iteration, we apply a small fixed number of global
defect sweeps
\begin{equation}
  r^{(s)}=b^{n+1}-A^{n+1}C^{(s)},\qquad
  C^{(s+1)}
  =
  C^{(s)}+M_f^{-1}r^{(s)},
  \qquad s=0,\ldots,s_{\max}-1.
  \label{eq:global-defect-sweep}
\end{equation}
The residual is always formed with the current matrix \(A^{n+1}\); only the
approximate inverse is frozen.  The defect stage is stopped early if
\[
  \frac{\|r^{(s)}\|_2}{\|b^{n+1}\|_2}
  \leq \varepsilon_{\rm lin}.
\]
Thus these sweeps are not a change in the discretization.  They are cheap
pre-Krylov corrections using triangular solves with \(L_f\) and \(U_f\).

The corrected vector is then used as the initial guess for left-preconditioned
GMRES,
\begin{equation}
  M_f^{-1}A^{n+1}C^{n+1}
  =
  M_f^{-1}b^{n+1}.
  \label{eq:frozen-ilu-gmres}
\end{equation}
The linear tolerance is chosen below the expected spatial error, using
\[
  \varepsilon_{\rm lin}
  =
  \max\!\left(10^{-10},\min\!\left(10^{-8},0.05h^\xi\right)\right)
\]
unless a stricter user-prescribed tolerance is supplied.  The frozen ILU is
reused across time steps with the same BDF coefficient \(\beta_q\).  It is
refreshed at startup, when \(\beta_q\) changes, when GMRES fails, when the
previous GMRES solve reaches a prescribed iteration threshold, or after an
optional maximum number of reuses.  If a solve fails with the current frozen
factors, the factors are rebuilt for \(A^{n+1}\) and the defect-plus-GMRES
solve is repeated.  The result is a second defect-correction mechanism, now
at the global sparse-matrix level: local RBF-FD defect correction exploits
smooth stencil motion, while the global solve exploits smooth motion of the
assembled BDF matrix.

\section{Convergence of tangent-plane RBF-FD}
\label{sec:tangent-plane-error-estimates}

We now establish convergence of the tangent-plane operators from
Section~\ref{subsec:overview-tangent-rbffd}.  We use the notation from that
section and take $\ell=\xi+1$
so that polynomial reproduction through degree \(\ell\) gives the target
order \(\xi\) for the Laplace--Beltrami operator.  The corresponding surface
gradient has one additional order.  This choice matches the spatial-order
parameter \(\xi\) used throughout the numerical results.

For a stencil centered at \(\bm x_i\), let
 \(w_{i,\Delta_{\mathcal M}}\) and
\(w_{i,\partial_{\mathcal M,\alpha}}\) be the physical weights in
\eqref{eq:tp-laplacian-row} and \eqref{eq:tp-gradient-row}.  We assume the
scaled stencils remain uniformly bounded,
\begin{equation}
  \max_{i,j}\|\bm\zeta_{i_j}\|_2\leq C_{\rm st},
  \label{eq:theory-scaled-stencil-bound}
\end{equation}
and define the dimensionless stability factors
\begin{equation}
  \Lambda_\Delta
  =\max_i
    \rho_i^2\|w_{i,\Delta_{\mathcal M}}\|_1,
  \qquad
  \Lambda_\nabla
  =\max_{i,\alpha}
    \rho_i\|w_{i,\partial_{\mathcal M,\alpha}}\|_1.
  \label{eq:theory-stability-factors}
\end{equation}
These are the usual stability quantities for polynomial-exact numerical
differentiation formulas~\cite{DavydovSchaback2018}.  The reproduction
property \eqref{eq:tp-polynomial-reproduction}, together with the rescalings
\eqref{eq:tp-laplacian-row}--\eqref{eq:tp-gradient-row}, implies exactness of
the physical weights on \(\Pi_{\xi+1}(\mathbb R^2)\) in the physical tangent
coordinates \(\bm y=\rho_i\bm\zeta\).  For each point cloud, define the global spatial scale
\[
  h=\max_{1\leq i\leq N}\rho_i.
\]
Thus, \(h\to0\) along a refinement sequence means that the largest local
stencil scale tends to zero.  For quasi-uniform point sets and the scaling used
here, \(h\) is comparable to the surface fill distance.

\begin{lemma}[Exact tangent-plane identities]
\label{lem:exact-tangent-plane-identities}
Let \(T_i\) span the exact tangent plane at \(\bm x_i\).  For a sufficiently
small neighborhood, write the surface as the Monge graph
\begin{equation}
  \Psi_i(\bm y)
  =\bm x_i+T_i\bm y+\bm n_i g_i(\bm y),
  \qquad g_i(0)=0,\qquad \nabla g_i(0)=0.
  \label{eq:theory-exact-monge-chart}
\end{equation}
If \(u_i=u\circ\Psi_i\), then
\begin{equation}
  \partial_{\mathcal M,\alpha}u(\bm x_i)
  =\bm e_\alpha^TT_i\nabla u_i(0),
  \qquad
  \Delta_{\mathcal M}u(\bm x_i)=\Delta u_i(0).
  \label{eq:theory-exact-tangent-identities}
\end{equation}
\end{lemma}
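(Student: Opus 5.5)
Both identities come from computing the metric of the Monge chart \(\Psi_i\) at \(\bm y=0\) and evaluating the intrinsic coordinate formulas there. First differentiate \eqref{eq:theory-exact-monge-chart} to get
\[
\partial_k\Psi_i(\bm y)=\bm t_{i,k}+\bm n_i\,\partial_k g_i(\bm y),
\]
so the metric is \(G(\bm y)=I+\nabla g_i\nabla g_i^T\). Since \(\nabla g_i(0)=0\), we have \(G(0)=I\) and \(\sqrt{\det G(0)}=1\). Moreover \(\partial_k G\) is a sum of terms each containing a factor \(\nabla g_i\), so it vanishes at \(0\). The same then holds for \(\partial_k G^{-1}\) and \(\partial_k\sqrt{\det G}\). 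In other words, the Monge coordinates are normal coordinates to first order at the center.

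For the gradient identity, use the chart formula
\[
\nabla_{\mathcal M}u(\Psi_i(\bm y))=D\Psi_i(\bm y)\,G(\bm y)^{-1}\nabla u_i(\bm y),
\]
where \(D\Psi_i=[\partial_1\Psi_i,\partial_2\Psi_i]\). This formula follows because the right-hand side is tangent and its inner product with each \(\partial_k\Psi_i\) equals \(\partial_k u_i\), which characterizes the tangential gradient. Equivalently, it agrees with \(P\nabla\tilde u\) for any smooth extension \(\tilde u\), using \(\nabla_{\mathcal M}=P\nabla\) as in \eqref{eq:hv-exact-surface-gradient}. At \(\bm y=0\) we have \(D\Psi_i(0)=T_i\) and \(G(0)=I\), so \(\nabla_{\mathcal M}u(\bm x_i)=T_i\nabla u_i(0)\). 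Taking the \(\alpha\)th Cartesian component, i.e. multiplying by \(\bm e_\alpha^T\), gives the first identity.

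For the Laplace--Beltrami identity, use the coordinate formula
\[
\Delta_{\mathcal M}u=\frac{1}{\sqrt{\det G}}\,\partial_j\bigl(\sqrt{\det G}\,G^{jk}\partial_k u_i\bigr).
\]
Expanding by the product rule at \(\bm y=0\), every term in which a derivative falls on \(\sqrt{\det G}\) or on \(G^{jk}\) vanishes by the first-order flatness established above. What remains is \(G^{jk}(0)\,\partial_j\partial_k u_i(0)=\Delta u_i(0)\).

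The only real obstacle is justifying the first-order flatness, namely that \(\partial_k G(0)=0\) even though \(g_i\) has nonzero Hessian (curvature). This holds because \(G-I\) is quadratic in \(\nabla g_i\). It is why the exact tangent plane is required: with a tilted \(T_i\), \(\nabla g_i(0)\neq0\) and both identities would pick up metric corrections. Smoothness of \(u\) and the existence of the graph representation in a small neighborhood (implicit function theorem) are assumed as in the statement.
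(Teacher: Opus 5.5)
Your proposal is correct and follows the paper's proof: both compute the Monge-chart metric $G=I+\nabla g_i\nabla g_i^T$, note that $G(0)=I$ and $\partial_a G(0)=0$ because $\nabla g_i(0)=0$, and substitute these into the standard coordinate formulas for $\nabla_{\mathcal M}$ and $\Delta_{\mathcal M}$. You simply supply the details the paper leaves to the reader, namely the chart formula $D\Psi_i G^{-1}\nabla u_i$ for the tangential gradient and the product-rule expansion of the divergence-form Laplace--Beltrami operator at $\bm y=0$.
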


\begin{proof}
The metric induced by \eqref{eq:theory-exact-monge-chart} is
\(G=I+\nabla g_i\nabla g_i^T\).  Hence \(G(0)=I\) and
\(\partial_aG(0)=0\).  Substitution into the standard coordinate formulas for
\(\nabla_{\mathcal M}\) and \(\Delta_{\mathcal M}\) gives
\eqref{eq:theory-exact-tangent-identities}; see also
\cite{JonesBoslerKuberryWright2023}.
\end{proof}

\begin{theorem}[Convergence with exact normals]
\label{thm:exact-normal-tangent-rbffd}
Let \(\mathcal M\subset\mathbb R^3\) be a compact co-dimension one manifold that is closed and
\(C^{\xi+2}\).  Consider a sequence of point clouds \(X_h\subset\mathcal M\)
with \(h\to0\) (in the sense described above), and construct the tangent-plane RBF-FD operators using the
exact surface normals.  Suppose the formulas reproduce
\(\Pi_{\xi+1}(\mathbb R^2)\) and that there is a constant \(C_\Lambda\),
independent of \(h\), such that
\begin{equation}
  \Lambda_\nabla\leq C_\Lambda,
  \qquad
  \Lambda_\Delta\leq C_\Lambda
  \label{eq:theory-exact-uniform-stability}
\end{equation}
for every point cloud in the refinement sequence.  Then, for
\(u\in C^{\xi+2}(\mathcal M)\) and
\(U=(u(\bm x_1),\ldots,u(\bm x_N))^T\),
\begin{align}
  \max_{\alpha\in\{x,y,z\}}
  \|G_\alpha U-\partial_{\mathcal M,\alpha}u|_{X_h}\|_\infty
  &\leq C h^{\xi+1}\|u\|_{C^{\xi+2}(\mathcal M)},
  \label{eq:exact-normal-global-gradient}\\
  \|LU-\Delta_{\mathcal M}u|_{X_h}\|_\infty
  &\leq C h^\xi\|u\|_{C^{\xi+2}(\mathcal M)}.
  \label{eq:exact-normal-global-laplacian}
\end{align}
The constant \(C\) is independent of \(h\) and \(u\).  Hence the tangent-plane
gradient and Laplace--Beltrami approximations converge with orders
\(\xi+1\) and \(\xi\), respectively.
\end{theorem}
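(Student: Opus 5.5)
The plan is the standard local argument: reduce each stencil to the exact Monge chart of Lemma~\ref{lem:exact-tangent-plane-identities}, Taylor-expand the pulled-back function, and let polynomial reproduction plus the stability factors \eqref{eq:theory-stability-factors} bound what is left. Fix a stencil center \(\bm x_i\). The normals are exact, so \(T_i\) spans \(T_{\bm x_i}\mathcal M\). Because \(\mathcal M\) is compact and \(C^{\xi+2}\), there is a radius \(r_0>0\), uniform in \(i\), such that the Monge chart \eqref{eq:theory-exact-monge-chart} represents \(\mathcal M\cap B(\bm x_i,r_0)\) as a graph over the tangent plane with \(\|g_i\|_{C^{\xi+2}}\) bounded uniformly in \(i\). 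By \eqref{eq:theory-scaled-stencil-bound}, every stencil point lies in this neighborhood once \(h\) is small: \(\|\bm y_{i_j}\|\le C_{\rm st}\rho_i\le C_{\rm st}h\). Here \(\bm y_{i_j}=T_i^T(\bm x_{i_j}-\bm x_i)=\rho_i\bm\zeta_{i_j}\) are exactly the tangent coordinates of the stencil points in the chart, since the normal component is discarded by the projection. So \(u(\bm x_{i_j})=u_i(\bm y_{i_j})\) with \(u_i=u\circ\Psi_i\in C^{\xi+2}\), and, by the chain rule and the uniform chart bounds, \(\|u_i\|_{C^{\xi+2}(B_{C_{\rm st}h})}\le C\|u\|_{C^{\xi+2}(\mathcal M)}\).

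Next, Taylor-expand \(u_i\) at \(0\) to degree \(\xi+1\): \(u_i=p+R\) with \(p\in\Pi_{\xi+1}(\mathbb R^2)\) and \(|R(\bm y)|\le C\|\bm y\|^{\xi+2}\|u_i\|_{C^{\xi+2}}\). The physical weights are exact on \(\Pi_{\xi+1}\) in the coordinates \(\bm y\) (this is the remark before the lemma). Hence for the Laplacian row
\[
(LU)_i-\Delta u_i(0)=\sum_j (w_{i,\Delta_{\mathcal M}})_j R(\bm y_{i_j}) - \Delta R(0).
\]
Since \(\Delta R(0)=0\), this gives
\[
|(LU)_i-\Delta u_i(0)|\le \|w_{i,\Delta_{\mathcal M}}\|_1\,C(C_{\rm st}\rho_i)^{\xi+2}\|u_i\|_{C^{\xi+2}}\le C_\Lambda \rho_i^{-2}C\rho_i^{\xi+2}\|u\|_{C^{\xi+2}},
\]
which is \(O(h^\xi)\). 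For the gradient, \eqref{eq:tp-gradient-row} shows that the physical gradient row is \(\bm e_\alpha^T T_i\) applied to the pair of physical weights for \(\partial_{y_1},\partial_{y_2}\). Those weights reproduce first derivatives of \(\Pi_{\xi+1}\), and \(\nabla R(0)=0\), so the same argument gives \(C_\Lambda\rho_i^{-1}\rho_i^{\xi+2}=O(h^{\xi+1})\). Here I would take \(\Lambda_\nabla\) to control the \(\ell^1\) norms of both scaled-coordinate weight vectors. If the definition only controls the Cartesian combinations, I would recover the tangential ones by contracting with \(T_i\), which costs only a factor of at most 3 since \(T_i\) is orthonormal. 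Finally, Lemma~\ref{lem:exact-tangent-plane-identities} identifies \(\Delta u_i(0)\) with \(\Delta_{\mathcal M}u(\bm x_i)\) and \(\bm e_\alpha^TT_i\nabla u_i(0)\) with \(\partial_{\mathcal M,\alpha}u(\bm x_i)\). Taking the maximum over \(i\) and using \(\rho_i\le h\) gives \eqref{eq:exact-normal-global-gradient}--\eqref{eq:exact-normal-global-laplacian}.

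The main obstacle is uniformity. All three ingredients need a constant independent of \(i\) and \(h\): the radius of validity of the Monge chart, the \(C^{\xi+2}\) bound on \(g_i\), and the bound \(\|u\circ\Psi_i\|_{C^{\xi+2}}\lesssim\|u\|_{C^{\xi+2}(\mathcal M)}\). I would get them from compactness. The tangent-plane projection is a local diffeomorphism whose inverse has derivatives controlled by the \(C^{\xi+2}\) norm of \(\mathcal M\), via the inverse function theorem applied uniformly over a finite atlas. Uniform Lipschitz continuity of the normal map then gives a uniform \(r_0\). For \(h\) above the threshold \(r_0/C_{\rm st}\), the estimate holds trivially after enlarging \(C\), because the weights are bounded by the stability assumption and \(h\) is bounded below in that range.
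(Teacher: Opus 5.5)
Your proposal is correct and is essentially the paper's proof: the exact Monge chart with Lemma~\ref{lem:exact-tangent-plane-identities}, the degree-$(\xi+1)$ Taylor polynomial, polynomial reproduction to reduce the local error to $\sum_j w_{ij}R(\bm y_{i_j})$, and the stability factor to turn $\rho_i^{\xi+2}\|w_i\|_1$ into $C\rho_i^{\xi+2-s}\le Ch^{\xi+2-s}$. The differences are minor. You make explicit the uniform chart constants that the paper leaves implicit, though your large-$h$ fallback should be a per-stencil split on the size of $\rho_i$ rather than on $h$, since $\|w_i\|_1\le C_\Lambda\rho_i^{-s}$ is not uniformly bounded when $\rho_i$ is small. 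Your detour through the two tangential weight vectors is unnecessary, because the paper applies the same argument directly to the Cartesian row $w_{i,\partial_{\mathcal M,\alpha}}$ with $\mathcal D=\bm e_\alpha^T T_i\nabla$, which is exactly what $\Lambda_\nabla$ controls.
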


\begin{proof}
Fix a stencil centered at \(\bm x_i\), let \(\Psi_i\) be the Monge
parameterization in \eqref{eq:theory-exact-monge-chart}, and define
\[
  u_i(\bm y)=u(\Psi_i(\bm y)).
\]
Lemma~\ref{lem:exact-tangent-plane-identities} identifies a surface-gradient
component or the Laplace--Beltrami operator at \(\bm x_i\) with the
corresponding Euclidean differential operator applied to \(u_i\) at
\(\bm y=0\).  Denote this Euclidean operator by \(\mathcal D\), and let
\(s\in\{1,2\}\) be its differential order.

Let \(p_i\) be the degree-\((\xi+1)\) Taylor polynomial of \(u_i\) at the
origin.  Since \(s\leq\xi+1\),
\[
  \mathcal D p_i(0)=\mathcal D u_i(0).
\]
After restoring the physical scaling in \eqref{eq:tp-laplacian-row}--\eqref{eq:tp-gradient-row}, polynomial
reproduction gives
\[
  \mathcal D p_i(0)
  =\sum_{j=1}^{n_s}w_{ij}p_i(\bm y_{i_j}),
  \qquad
  \bm y_{i_j}=T_i^T(\bm x_{i_j}-\bm x_i).
\]
Therefore the local error is
\[
  E_i
  =\sum_{j=1}^{n_s}w_{ij}
  \bigl[p_i(\bm y_{i_j})-u_i(\bm y_{i_j})\bigr].
\]
Taylor's theorem and \eqref{eq:theory-scaled-stencil-bound} imply
\[
  |u_i(\bm y_{i_j})-p_i(\bm y_{i_j})|
  \leq
  C\rho_i^{\xi+2}\|u\|_{C^{\xi+2}(\mathcal M)}.
\]
Thus 
\[
  |E_i|
  \leq
  C\rho_i^{\xi+2}\|w_i\|_1
  \|u\|_{C^{\xi+2}(\mathcal M)}.
\]
For an order-\(s\) operator, set
\(\Lambda_i=\rho_i^s\|w_i\|_1\).  By
\eqref{eq:theory-exact-uniform-stability}, \(\Lambda_i\leq C_\Lambda\), so
\[
  |E_i|
  \leq
  C\rho_i^{\xi+2-s}\|u\|_{C^{\xi+2}(\mathcal M)}.
\]
Since \(\rho_i\leq h\), taking the maximum over all stencils gives
\eqref{eq:exact-normal-global-gradient}--
\eqref{eq:exact-normal-global-laplacian}.
\end{proof}

We next allow the normal used to construct the tangent plane to be
approximate.  Let
\begin{equation}
  P_i=I-\bm n_i\bm n_i^T,
  \qquad
  \widetilde P_i=I-\widetilde{\bm n}_i\widetilde{\bm n}_i^T,
  \qquad
  \varepsilon_h=\max_i\|P_i-\widetilde P_i\|_2.
  \label{eq:theory-projector-error}
\end{equation}
The projector error is independent of the orientation of the normal and equals
the sine of the angle between the exact and approximate tangent planes.
In the following theorem, \(G_\alpha\), \(L\), \(\rho_i\), and the stability
factors are understood to be constructed from the approximate tangent planes.

\begin{theorem}[Convergence with approximate normals]
\label{thm:approximate-normal-tangent-rbffd}
Let the assumptions of Theorem~\ref{thm:exact-normal-tangent-rbffd} hold, but
construct the tangent planes from approximate unit normals
\(\widetilde{\bm n}_i\).  Assume \(\varepsilon_h\leq\varepsilon_0<1\) for all
sufficiently small \(h\).  Also assume that there is a constant
\(C_\Lambda\), independent of \(h\), such that the approximate-plane weights
satisfy
\begin{equation}
  \max_{i,\alpha}
  \rho_i\|w_{i,\partial_{\mathcal M,\alpha}}\|_1
  \leq C_\Lambda,
  \qquad
  \max_i
  \rho_i^2\|w_{i,\Delta_{\mathcal M}}\|_1
  \leq C_\Lambda
  \label{eq:theory-approximate-uniform-stability}
\end{equation}
for every point cloud in the refinement sequence.  Then
\begin{align}
  \max_{\alpha\in\{x,y,z\}}
  \|G_\alpha U-\partial_{\mathcal M,\alpha}u|_{X_h}\|_\infty
  &\leq C\bigl(h^{\xi+1}+\varepsilon_h\bigr)
  \|u\|_{C^{\xi+2}(\mathcal M)},
  \label{eq:approximate-normal-global-gradient}\\
  \|LU-\Delta_{\mathcal M}u|_{X_h}\|_\infty
  &\leq C\bigl(h^\xi+\varepsilon_h\bigr)
  \|u\|_{C^{\xi+2}(\mathcal M)}.
  \label{eq:approximate-normal-global-laplacian}
\end{align}
In particular, if the normal approximation converges with order \(q_g\),
meaning $\varepsilon_h=O(h^{q_g})$,
then the surface gradient and Laplace--Beltrami errors are $O\!\left(h^{\min(\xi+1,q_g)}\right)$ and $O\!\left(h^{\min(\xi,q_g)}\right)$,
respectively.
\end{theorem}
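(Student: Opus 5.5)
Fix a stencil centered at \(\bm x_i\) and let \(\widetilde T_i\) be the orthonormal basis of the approximate tangent plane built from \(\widetilde{\bm n}_i\), with physical coordinates \(\widetilde{\bm y}=\widetilde T_i^T(\bm x-\bm x_i)\). The plan is to mimic the proof of Theorem~\ref{thm:exact-normal-tangent-rbffd}, but in an \emph{approximate} Monge chart. Because \(\varepsilon_h\leq\varepsilon_0<1\), the approximate plane is uniformly transverse to the normal. Hence, for \(h\) small, the surface near \(\bm x_i\) is a graph
\[
  \widetilde\Psi_i(\widetilde{\bm y})=\bm x_i+\widetilde T_i\widetilde{\bm y}+\widetilde{\bm n}_i\widetilde g_i(\widetilde{\bm y}),
\]
with \(\widetilde g_i(0)=0\) and \(\|\nabla\widetilde g_i(0)\|\leq C\varepsilon_h\). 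The \(C^{\xi+2}\) norms of \(\widetilde g_i\) are bounded uniformly in \(i\) and \(h\) by the implicit function theorem and compactness. Set \(\widetilde u_i=u\circ\widetilde\Psi_i\), so that \(\|\widetilde u_i\|_{C^{\xi+2}}\leq C\|u\|_{C^{\xi+2}(\mathcal M)}\). The stencil nodes lie on \(\mathcal M\), so \(u(\bm x_{i_j})=\widetilde u_i(\widetilde{\bm y}_{i_j})\). The consistency argument of Theorem~\ref{thm:exact-normal-tangent-rbffd} then goes through verbatim: Taylor expansion of \(\widetilde u_i\) to degree \(\xi+1\), polynomial reproduction, the scaled stencil bound \eqref{eq:theory-scaled-stencil-bound}, and the stability bound \eqref{eq:theory-approximate-uniform-stability}. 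This gives
\[
  \Bigl|\sum_j w_{ij}u(\bm x_{i_j})-\mathcal D\widetilde u_i(0)\Bigr|\leq C\rho_i^{\xi+2-s}\|u\|_{C^{\xi+2}(\mathcal M)} .
\]
The error therefore splits into this \(h^{\xi+2-s}\) term and a geometric term \(|\mathcal D\widetilde u_i(0)-\mathcal D_{\mathcal M}u(\bm x_i)|\). It remains to show that the geometric term is \(O(\varepsilon_h)\).

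For the gradient, the assembled row computes \(\bm e_\alpha^T\widetilde T_i\nabla\widetilde u_i(0)\). By the chain rule, \(\nabla\widetilde u_i(0)=D\widetilde\Psi_i(0)^T\nabla^{\rm ext}u\), where \(\nabla^{\rm ext}u\) is any smooth extension's gradient, taken tangential so that it equals \(P_i\nabla^{\rm ext}u=\nabla_{\mathcal M}u(\bm x_i)\). Also \(D\widetilde\Psi_i(0)=\widetilde T_i+\widetilde{\bm n}_i\nabla\widetilde g_i(0)^T\). Hence
\[
  \widetilde T_i\nabla\widetilde u_i(0)=\widetilde T_i\widetilde T_i^T\nabla_{\mathcal M}u+O(\varepsilon_h)=\widetilde P_i P_i\nabla^{\rm ext}u+O(\varepsilon_h).
\]
Since \(\|\widetilde P_iP_i-P_i\|=\|(\widetilde P_i-P_i)P_i\|\leq\varepsilon_h\), the geometric error is bounded by \(C\varepsilon_h\|u\|_{C^1}\), which gives \eqref{eq:approximate-normal-global-gradient}.

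For the Laplacian, the coordinate formula for \(\Delta_{\mathcal M}\) in the chart \(\widetilde\Psi_i\) has metric \(G=I+\nabla\widetilde g_i\nabla\widetilde g_i^T\). At the origin, \(G(0)=I+O(\varepsilon_h^2)\). The first derivatives of \(G\) are \(\partial_aG(0)=\nabla\partial_a\widetilde g_i\nabla\widetilde g_i^T+\text{(transpose)}=O(\varepsilon_h)\), using the uniform bounds on the second derivatives of \(\widetilde g_i\). Thus
\[
  \Delta_{\mathcal M}u(\bm x_i)=G^{ab}\partial_{ab}\widetilde u_i(0)+(\text{first-order terms with coefficients }O(\partial G))=\Delta\widetilde u_i(0)+O(\varepsilon_h)\|u\|_{C^2},
\]
which yields \eqref{eq:approximate-normal-global-laplacian}. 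The final statement about \(O(h^{\min(\xi+1,q_g)})\) and \(O(h^{\min(\xi,q_g)})\) follows immediately by substituting \(\varepsilon_h=O(h^{q_g})\) and taking the maximum over stencils, using \(\rho_i\leq h\).

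The main obstacle I expect is the uniform construction of the tilted graph chart \(\widetilde\Psi_i\). Its domain must contain the whole projected stencil, which needs \(\rho_i C_{\rm st}\) small relative to a reach-type radius depending on \(\varepsilon_0\) and the curvature. Its \(C^{\xi+2}\) bounds must be independent of \(i\) and \(h\), and it must satisfy \(\nabla\widetilde g_i(0)=O(\varepsilon_h)\). I would obtain this by writing the exact Monge graph \(g_i\) and applying the implicit function theorem to the linear change of variables from \(T_i\)-coordinates to \(\widetilde T_i\)-coordinates. That map is an invertible matrix \(\widetilde T_i^TT_i\) with \(\sigma_{\min}\geq\sqrt{1-\varepsilon_0^2}\), perturbed by the curvature term \(\widetilde T_i^T\bm n_ig_i\). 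Compactness of \(\mathcal M\) then gives uniformity.
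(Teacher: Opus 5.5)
Your proposal is correct and follows the paper's proof essentially step for step. You write the surface as a graph over the approximate tangent plane with $\|\nabla\widetilde g_i(0)\|\le C\varepsilon_h$, rerun the Taylor/polynomial-reproduction argument of Theorem~\ref{thm:exact-normal-tangent-rbffd} in those tilted coordinates to get the $\rho_i^{\xi+2-s}$ term, and bound the geometric discrepancy $|\mathcal D\widetilde u_i(0)-\mathcal D_{\mathcal M}u(\bm x_i)|$ by $O(\varepsilon_h)$ through the perturbed metric $I+\nabla\widetilde g_i\nabla\widetilde g_i^T$. The differences are cosmetic: you treat the gradient term with the chain rule and the projector bound $\|(\widetilde P_i-P_i)P_i\|_2\le\varepsilon_h$ instead of the coordinate formula with $\widetilde G_i(0)^{-1}=I+O(\varepsilon_h^2)$, and you sketch the uniform construction of the tilted chart via $\sigma_{\min}(\widetilde T_i^TT_i)\ge\sqrt{1-\varepsilon_0^2}$, which the paper simply asserts.
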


\begin{proof}
Fix a stencil centered at \(\bm x_i\).  Since the approximate tangent plane
makes an angle \(O(\varepsilon_h)\) with the exact tangent plane, for
\(\varepsilon_h\) sufficiently small the surface can be written locally as a
graph over the approximate plane,
\[
  \widetilde\Psi_i(\bm y)
  =\bm x_i+\widetilde T_i\bm y
  +\widetilde{\bm n}_i\widetilde g_i(\bm y),
\]
where \(\widetilde T_i\) is an orthonormal basis of that plane.  At the stencil
center,
\[
  \|\nabla\widetilde g_i(0)\|_2\leq C\varepsilon_h,
\]
and the derivatives of \(\widetilde g_i\) needed below remain uniformly
bounded by the smoothness of \(\mathcal M\). Set \(\widetilde u_i=u\circ\widetilde\Psi_i\).  The induced metric is
\[
  \widetilde G_i
  =I+\nabla\widetilde g_i\nabla\widetilde g_i^T,
\]
so \(\widetilde G_i(0)^{-1}=I+O(\varepsilon_h^2)\).  The coordinate formula for
the surface gradient therefore gives
\[
  \left\|
    \nabla_{\mathcal M}u(\bm x_i)
    -\widetilde T_i\nabla\widetilde u_i(0)
  \right\|_2
  \leq C\varepsilon_h\|u\|_{C^1(\mathcal M)}.
\]
Similarly, the local-coordinate formula for \(\Delta_{\mathcal M}\) differs
from the Euclidean Laplacian by metric and first-derivative terms whose
coefficients are \(O(\varepsilon_h)\).  Hence
\[
  \left|
    \Delta_{\mathcal M}u(\bm x_i)
    -\Delta\widetilde u_i(0)
  \right|
  \leq C\varepsilon_h\|u\|_{C^2(\mathcal M)}.
\]

The Taylor argument from
Theorem~\ref{thm:exact-normal-tangent-rbffd}, applied in the approximate-plane
coordinates, gives for a derivative of order \(s\in\{1,2\}\),
\[
  \left|
    \widetilde{\mathcal D}\widetilde u_i(0)
    -\sum_{j=1}^{n_s}w_{ij}\widetilde u_i(\bm y_{i_j})
  \right|
  \leq
  C\Lambda_i\rho_i^{\xi+2-s}
  \|u\|_{C^{\xi+2}(\mathcal M)},
\]
where \(\Lambda_i=\rho_i^s\|w_i\|_1\).  By
\eqref{eq:theory-approximate-uniform-stability},
\(\Lambda_i\leq C_\Lambda\).  Combining this differentiation error with the
\(O(\varepsilon_h)\) geometric error yields
\[
  |E_i|
  \leq
  C\left(\rho_i^{\xi+2-s}+\varepsilon_h\right)
  \|u\|_{C^{\xi+2}(\mathcal M)}.
\]
Since \(\rho_i\leq h\), taking the maximum over all stencils proves
\eqref{eq:approximate-normal-global-gradient}--
\eqref{eq:approximate-normal-global-laplacian}.
\end{proof}

The final result applies the preceding estimate uniformly to the evolving
surface.  Let \(X(t)\) be the current point cloud and define
\begin{equation}
  h=\sup_{0\leq t\leq T}\max_i\rho_i(t),
  \qquad
  \varepsilon_h
  =\sup_{0\leq t\leq T}\max_i
   \|P_i(t)-\widetilde P_i(t)\|_2.
  \label{eq:theory-moving-errors}
\end{equation}

\begin{theorem}[Evolving surfaces]
\label{thm:moving-surface-tangent-rbffd}
Let \(\{\mathcal M(t)\}_{0\leq t\leq T}\) be a family of compact, closed,
\(C^{\xi+2}\) surfaces of co-dimension one with a uniform Monge radius and uniformly bounded
\(C^{\xi+2}\) chart norms.  Assume
\eqref{eq:theory-scaled-stencil-bound} and
\eqref{eq:theory-stability-factors} hold uniformly in time and that
\(\varepsilon_h\leq\varepsilon_0<1\).  Then, for
\[
  \|u\|_{\xi+2,\infty;T}
  :=\sup_{0\leq t\leq T}
  \|u(\cdot,t)\|_{C^{\xi+2}(\mathcal M(t))},
\]
we have
\begin{align}
  \sup_{0\leq t\leq T}\max_{\alpha\in\{x,y,z\}}
  \|G_\alpha(t)U(t)
  -\partial_{\mathcal M(t),\alpha}u|_{X(t)}\|_\infty
  &\leq C\bigl(h^{\xi+1}+\varepsilon_h\bigr)
  \|u\|_{\xi+2,\infty;T},
  \label{eq:moving-surface-gradient-estimate}\\
  \sup_{0\leq t\leq T}
  \|L(t)U(t)-\Delta_{\mathcal M(t)}u|_{X(t)}\|_\infty
  &\leq C\bigl(h^\xi+\varepsilon_h\bigr)
  \|u\|_{\xi+2,\infty;T}.
  \label{eq:moving-surface-laplacian-estimate}
\end{align}
Hence the tangent-plane operators converge uniformly in time whenever
\(h\to0\) and \(\varepsilon_h\to0\).  If the geometric model satisfies
\begin{equation}
  \varepsilon_h\leq C_g H_M^{q_g},
  \label{eq:theory-geometry-normal-rate}
\end{equation}
then the errors are
\(O(h^{\xi+1}+H_M^{q_g})\) and
\(O(h^\xi+H_M^{q_g})\), respectively.  Thus the geometry criterion
\[
  H_M^{q_g}\lesssim c_{\rm geom}h^\xi
\]
used in Section~\ref{subsec:global-parametric-sbf-models} preserves the target
\(O(h^\xi)\) accuracy of the Laplace--Beltrami operator.  Rearrangement and
changes in stencil membership do not alter the estimate provided the uniform
geometric and stability assumptions continue to hold.
\end{theorem}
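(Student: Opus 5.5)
The plan is to reduce Theorem~\ref{thm:moving-surface-tangent-rbffd} to Theorem~\ref{thm:approximate-normal-tangent-rbffd}, applied at each fixed time. The key point is that every constant there depends only on the surface through a few geometric quantities, and these are controlled uniformly in \(t\) by the hypotheses. Concretely, fix \(t\in[0,T]\) and regard \(\mathcal M(t)\) with point cloud \(X(t)\) as a stationary surface. The approximate-normal argument uses four ingredients. First, a Monge radius \(r_0\) within which \(\mathcal M(t)\) is a graph over any plane within angle \(\arcsin\varepsilon_0\) of the true tangent plane. Second, bounds on the graph function \(\widetilde g_i\) and its derivatives up to order \(\xi+2\). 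Third, the scaled stencil bound \eqref{eq:theory-scaled-stencil-bound}. Fourth, the stability bound \eqref{eq:theory-approximate-uniform-stability}. I would go through the proofs of Theorems~\ref{thm:exact-normal-tangent-rbffd} and~\ref{thm:approximate-normal-tangent-rbffd} and record that the constant \(C\) is a function only of \(C_{\rm st}\), \(C_\Lambda\), \(\varepsilon_0\), \(\xi\), and the \(C^{\xi+2}\) norm of the chart. In particular, \(C\) does not depend on \(N\), on which points form a stencil, or on the specific surface beyond these quantities.

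Next I would verify that these quantities are uniform in \(t\). The stencil and stability bounds hold uniformly in time by assumption, and \(\rho_i(t)\leq h\) by the definition \eqref{eq:theory-moving-errors}. The main obstacle is the tilted-graph step. One must show that a uniform Monge radius and uniformly bounded \(C^{\xi+2}\) chart norms for the exact tangent planes imply the same for charts over the approximate planes, with \(\|\nabla\widetilde g_i(0)\|\leq C\varepsilon_h\). I would prove this by the implicit function theorem applied to the rotated exact chart \(\Psi_i\). The rotation \(R\) taking \(T_i\) to \(\widetilde T_i\) has angle \(\arcsin\|P_i-\widetilde P_i\|\leq\arcsin\varepsilon_0<\pi/2\). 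Solving \(\widetilde T_i^T(\Psi_i(\bm z)-\bm x_i)=\bm y\) for \(\bm z\) therefore gives a map whose Jacobian \(\widetilde T_i^T T_i+O(|\bm z|)\) is invertible with a lower bound depending only on \(\varepsilon_0\), once the radius is shrunk uniformly. The derivatives of \(\widetilde g_i\) are then bounded through the inverse function formula by the \(C^{\xi+2}\) chart norms. It is essential that these bounds are uniform in \(t\) and \(i\), and they are, because only the uniform data enter. Finally, one needs the stencils to fit inside the uniform Monge neighborhood, which holds for \(h\) below a fixed threshold because the stencil radius is at most \(C_{\rm st}h\). For larger \(h\) the estimate is trivial after enlarging \(C\), since both the operators and the derivatives are bounded.

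With these uniform constants, the pointwise estimates of Theorem~\ref{thm:approximate-normal-tangent-rbffd} hold at each \(t\) with \(\varepsilon_h(t)\leq\varepsilon_h\), \(\rho_i(t)\leq h\), and \(\|u(\cdot,t)\|_{C^{\xi+2}(\mathcal M(t))}\leq\|u\|_{\xi+2,\infty;T}\). Taking the supremum over \(t\) gives \eqref{eq:moving-surface-gradient-estimate} and \eqref{eq:moving-surface-laplacian-estimate}. Substituting \eqref{eq:theory-geometry-normal-rate} gives the \(H_M^{q_g}\) forms. Under the criterion \(H_M^{q_g}\lesssim c_{\rm geom}h^\xi\), the geometric term is absorbed into \(Ch^\xi\) for the Laplacian, which preserves the target \(O(h^\xi)\) accuracy. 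Rearrangement and changes in stencil membership only change which point cloud is used at a given time. Since the constant was shown to be independent of the point cloud, given the uniform assumptions, the estimate is unaffected. Notably, no time regularity of the weights is used, because the argument is pointwise in \(t\).
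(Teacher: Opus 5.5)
Your proposal is correct and follows the paper's argument. The paper applies Theorem~\ref{thm:approximate-normal-tangent-rbffd} at each fixed \(t\), notes that its constant depends only on the chart, stencil, and stability bounds assumed uniform in time, takes the supremum over \([0,T]\), and then inserts \eqref{eq:theory-geometry-normal-rate}. Your implicit-function-theorem construction of the graph over the approximate plane (valid for any \(\varepsilon_0<1\)) and your check that stencils lie in the uniform Monge neighborhood make explicit what the paper's one-line proof leaves implicit. One small fix for the coarse case: split stencil-by-stencil according to \(\rho_i\) rather than according to \(h\). The row norms are only bounded by \(C_\Lambda\rho_i^{-s}\), so a large \(h\) alone does not make every row bounded.
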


\begin{proof}
Apply Theorem~\ref{thm:approximate-normal-tangent-rbffd} at each fixed time.
The uniform chart, stencil, and stability assumptions make its constant
independent of \(t\).  Taking the supremum over \([0,T]\) gives
\eqref{eq:moving-surface-gradient-estimate}--
\eqref{eq:moving-surface-laplacian-estimate};
\eqref{eq:theory-geometry-normal-rate} gives the final statement.
\end{proof}

\begin{remark}
The estimates isolate tangent-plane error and assume the stencil nodes lie on
\(\mathcal M(t)\).  If the node positions themselves approximate the surface,
the corresponding point-location error contributes an additional geometric
term.
\end{remark}
\section{Numerical experiments}
\label{sec:numerical-experiments}

We test the moving-surface tangent-plane RBF-FD method on
advection--diffusion--reaction problems of the form
\begin{equation}
  D_t c + c\nabla_{\mathcal M(t)}\!\cdot\bm v
  =
  \nu\Delta_{\mathcal M(t)}c+\lambda c+f
  \qquad\text{on }\mathcal M(t),
  \label{eq:results-pde}
\end{equation}
where \(D_t\) denotes the material derivative along the prescribed surface
velocity \(\bm v\).  For the manufactured problems, we substitute the exact
solution into \eqref{eq:results-pde} to obtain \(f\); the artificial
hyperviscosity does not enter the manufactured forcing.  We conclude with a
non-manufactured biconcave-membrane problem driven by a three-dimensional
fluid--structure interaction simulation.

For a nodal approximation \(C_h(T)\), we report the relative final-time
\(\ell_2\) error
\begin{equation}
  E_h(T)
  =
  \frac{\|C_h(T)-c(X(T),T)\|_2}
       {\|c(X(T),T)\|_2}.
  \label{eq:results-relative-error}
\end{equation}
Unless stated otherwise, the convergence studies use
\[
  N=576,\;1024,\;1600,\;2500,\;3600,\;4900
\]
nodes and final time \(T=0.12\).  The exact manufactured solution supplies the
two startup values required by BDF3.  For each order parameter \(\xi\), we
choose \(\Delta t\) so that
\(\Delta t^3=\mathcal O(h^\xi)\), then adjust it so that \(T/\Delta t\) is an
integer.  We solve the global BDF systems using the extrapolated history as the
initial guess, a frozen incomplete LU preconditioner, at most four
defect-correction sweeps, and GMRES only when those sweeps do not reach the
requested residual tolerance.  All convergence plots use logarithmic axes
against \(\sqrt N\).  We compute the observed order as the negative
least-squares slope of \(\log E_h\) against \(\log\sqrt N\) over the stated
asymptotic range.

Appendix~\ref{app:supporting-experiments} collects supporting tests for
computed normals, mass projection, long-time integration, and marker
rearrangement.  Here we focus on convergence, computational cost, comparison
with published methods, and the fluid-driven membrane problem.

\subsection{Manufactured-solution convergence}
\label{subsec:manufactured-convergence}

The convergence suite spans radial contraction, radial expansion, rigid
ambient rotation, anisotropic deformation, and a genus-one surface.  Together
these problems test changing surface area, moving tangent frames, nonuniform
deformation, and surface topology.  Theorem~\ref{thm:exact-normal-tangent-rbffd}
gives orders \(\xi+1\) and \(\xi\) for the surface gradient and
Laplace--Beltrami operator with exact normals.
Theorem~\ref{thm:approximate-normal-tangent-rbffd} retains these orders when
the tangent-plane error is \(O(h^\xi)\) or smaller, and
Theorem~\ref{thm:moving-surface-tangent-rbffd} makes the estimates uniform in
time.  Since the Laplace--Beltrami term sets the lower spatial order in
\eqref{eq:results-pde}, \(O(h^\xi)\) is the natural benchmark for the
manufactured PDE solutions.  Our choice \(\Delta t^3=O(h^\xi)\) balances the
third-order temporal error with this spatial rate.  On a quasi-uniform
two-dimensional surface, \(h\asymp N^{-1/2}\), so \(O(h^\xi)\) appears as
slope \(-\xi\) when plotted against \(\sqrt N\).  The fitted rates below meet
or exceed this benchmark.

\subsubsection{Radial geometric flows}
\label{subsec:radial-geometric-flows}

The first two tests use spheres undergoing purely normal motion,
\[
  \mathcal M(t)=\{R(t)\bm y:\|\bm y\|=1\},
  \qquad
  \bm v(R(t)\bm y,t)=R'(t)\bm y,
\]
for which
\[
  \nabla_{\mathcal M(t)}\!\cdot\bm v=\frac{2R'(t)}{R(t)}.
\]
We prescribe the exact concentration in material coordinates as
\[
  c(R(t)\bm y,t)
  =
  e^{-t}\left[0.8+0.15y_1+0.05(y_1^2-y_2^2)\right].
\]
Because the nonconstant terms are spherical harmonics of degrees one and two,
\[
  \Delta_{\mathcal M(t)}c
  =
  \frac{e^{-t}}{R(t)^2}
  \left[-0.30y_1-0.30(y_1^2-y_2^2)\right],
\]
which gives the forcing without numerical differentiation.

For the contracting sphere we use the mean-curvature-flow (MCF) radius
\[
  R(t)=\sqrt{1.4^2-4t}.
\]
Figure~\ref{fig:mcf-sphere-convergence} shows the expected hierarchy with
\(\xi\).  Each fitted rate meets or exceeds the predicted \(O(h^\xi)\) rate for the
rate-limiting Laplace--Beltrami discretization.  Figure~\ref{fig:mcf-sphere-residual}
compares the exact and numerical concentrations and their pointwise error at
the final time.

\begin{figure}[pos=htbp]
\centering
\includegraphics[width=0.55\textwidth]{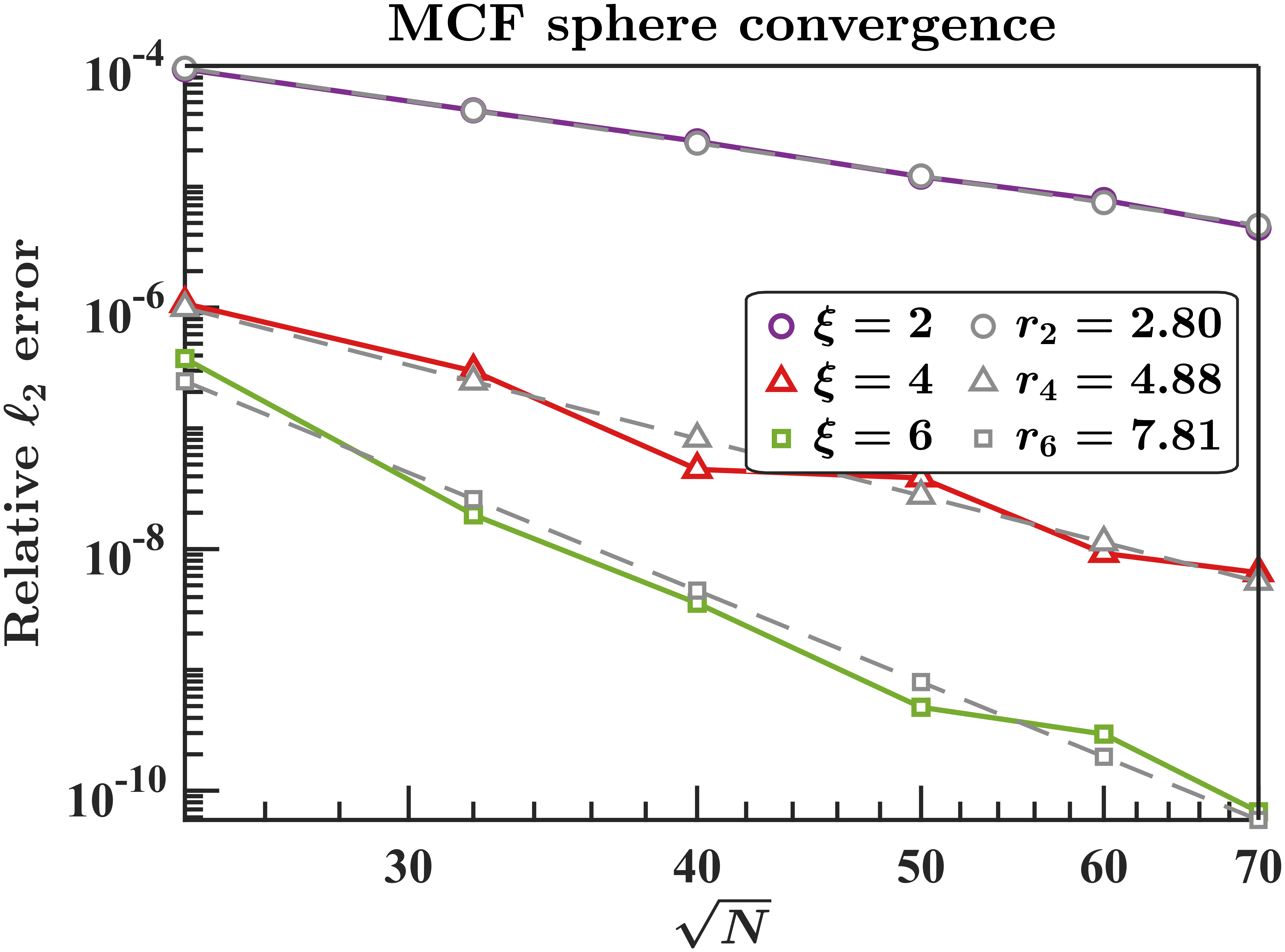}
\caption{Relative \(\ell_2\) error versus \(\sqrt N\) for the contracting
mean-curvature-flow sphere.}
\label{fig:mcf-sphere-convergence}
\end{figure}

% \begin{figure}[!htpb]
% \centering
% \includegraphics[width=0.78\textwidth]{figures/snapshot_visuals/moving_surface_adr_tp_manufactured_solution_snapshots_notitle_mcf_sphere.png}
% \caption{Evolution of the manufactured concentration on the contracting
% mean-curvature-flow sphere at \(t=0,0.06,0.12\), sampled at \(N=1600\)
% material sites.}
% \label{fig:mcf-sphere-showcase}
% \end{figure}

\begin{figure}[pos=htbp]
\centering
\includegraphics[width=0.78\textwidth]{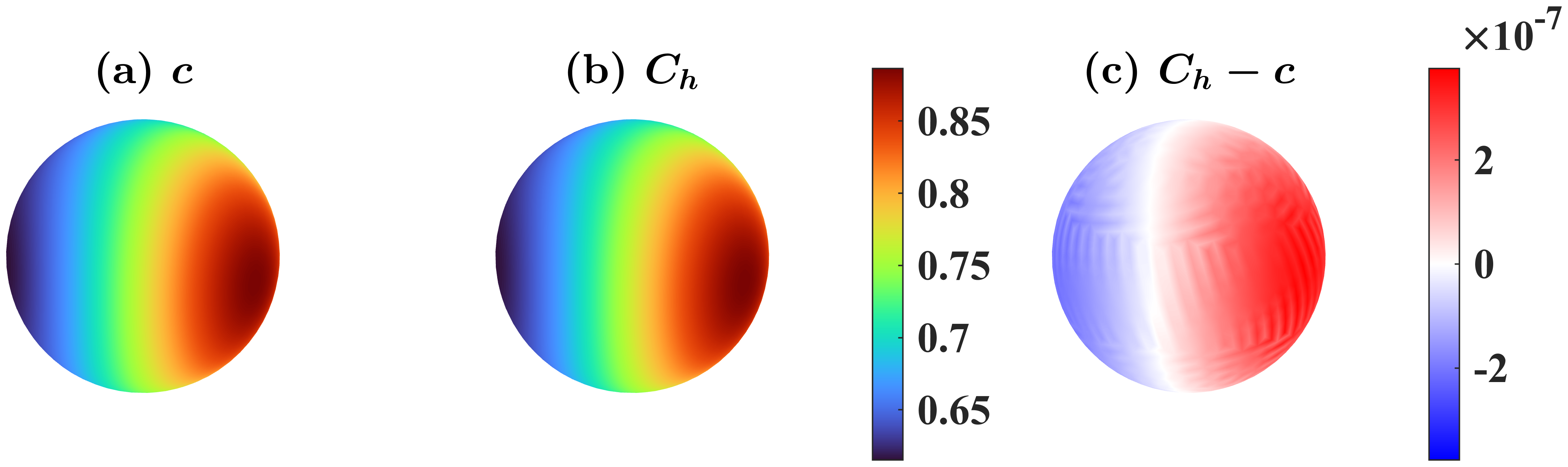}
\caption{Exact concentration, numerical concentration, and pointwise error at
\(T=0.12\) for the contracting sphere with \(\xi=6\) and \(N=1600\).}
\label{fig:mcf-sphere-residual}
\end{figure}

The expanding test uses the inverse-mean-curvature-flow (IMCF) radius
\[
  R(t)=e^{t/2}.
\]
This reverses the sign of the geometric compression while leaving the
material-coordinate solution unchanged.  Figure
\ref{fig:imcf-sphere-convergence} again meets or exceeds the same \(O(h^\xi)\) benchmark, now under surface
expansion.  Figure~\ref{fig:imcf-sphere-showcase} shows the
corresponding concentration evolution.

\begin{figure}[pos=htbp]
\centering
\includegraphics[width=0.55\textwidth]{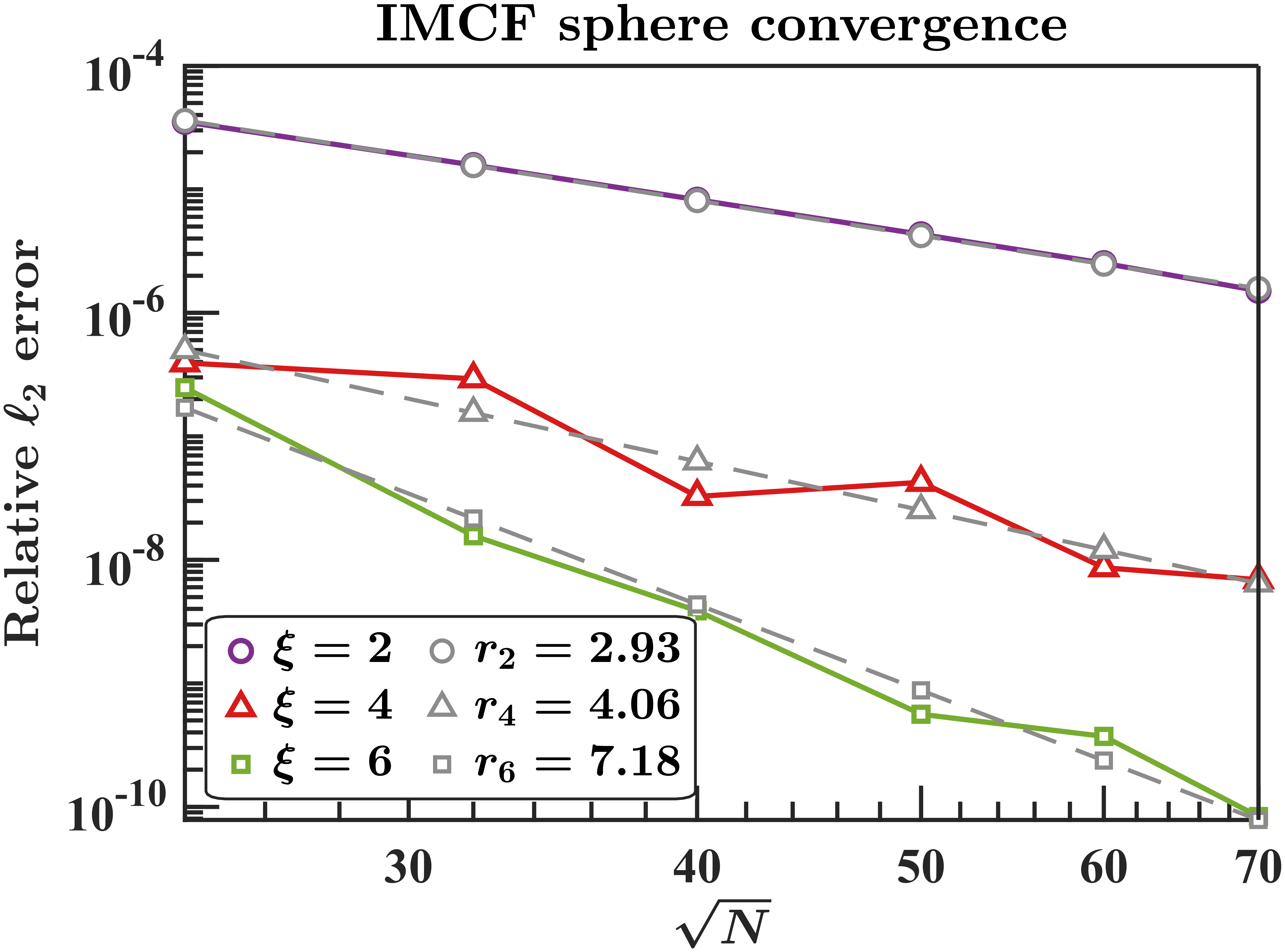}
\caption{Relative \(\ell_2\) error versus \(\sqrt N\) for the expanding
inverse-mean-curvature-flow sphere.}
\label{fig:imcf-sphere-convergence}
\end{figure}

\begin{figure}[pos=htbp]
\centering
\includegraphics[width=0.68\textwidth]{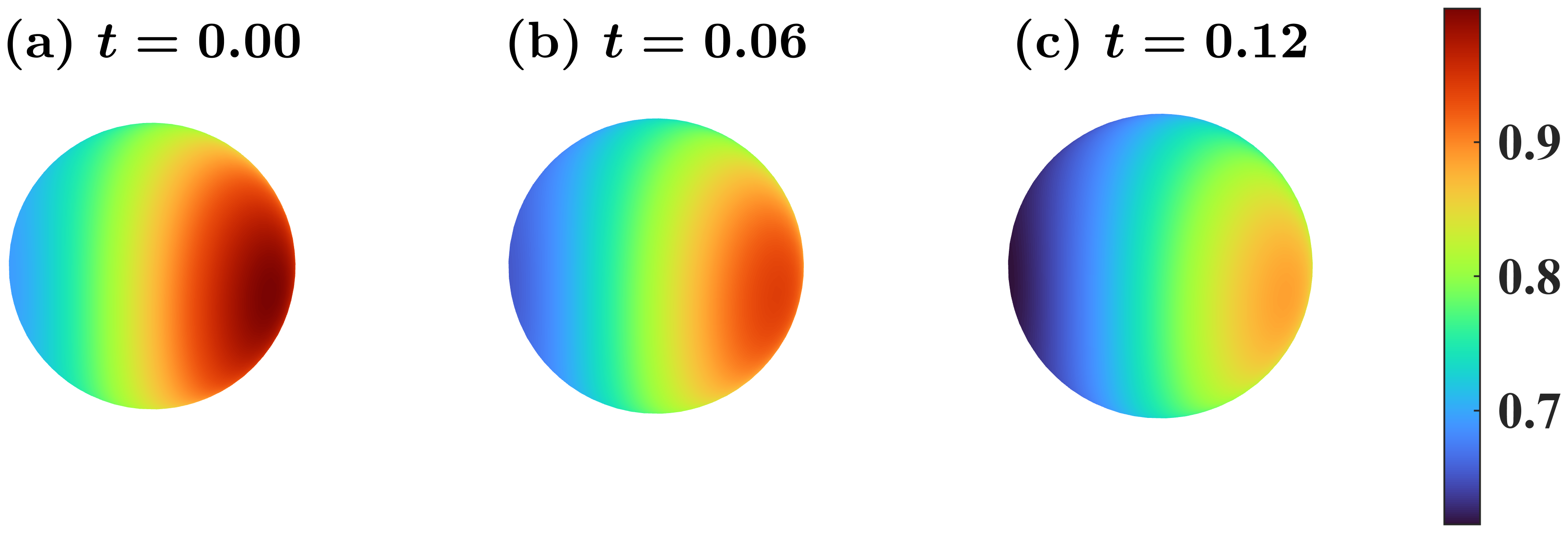}
\caption{Evolution of the manufactured concentration on the expanding
inverse-mean-curvature-flow sphere at \(t=0,0.06,0.12\), sampled at
\(N=1600\) material sites.}
\label{fig:imcf-sphere-showcase}
\end{figure}

% \begin{figure}[!htpb]
% \centering
% \includegraphics[width=0.78\textwidth]{figures/snapshot_visuals/moving_surface_adr_tp_error_residual_notitle_imcf_sphere.png}
% \caption{Exact concentration, numerical concentration, and pointwise error at
% \(T=0.12\) for the expanding sphere with \(\xi=6\) and \(N=1600\).}
% \label{fig:imcf-sphere-residual}
% \end{figure}

\subsubsection{Rotating breathing sphere}
\label{subsec:rotating-breathing-sphere}

The third test combines radial breathing with rigid ambient rotation:
\[
  \bm x(t)=R(t)Q_z(2t)\bm y,
  \qquad
  R(t)=1+0.15\sin(1.5t),
\]
where \(Q_z\) denotes rotation about the \(z\)-axis.  We use the same
concentration in material coordinates as in the radial tests.  The radial
component changes the intrinsic metric and surface divergence, while the
rotation changes the embedded coordinates and tangent frames without changing
the intrinsic geometry.  This directly tests the tangent-plane construction
under rigid motion.  Figure~\ref{fig:rotating-breathing-sphere-convergence}
shows high-order convergence, with every fitted rate meeting or exceeding
\(\xi\).  Thus rigid rotation does not degrade the predicted \(O(h^\xi)\)
moving-surface rate.

\begin{figure}[pos=htbp]
\centering
\includegraphics[width=0.55\textwidth]{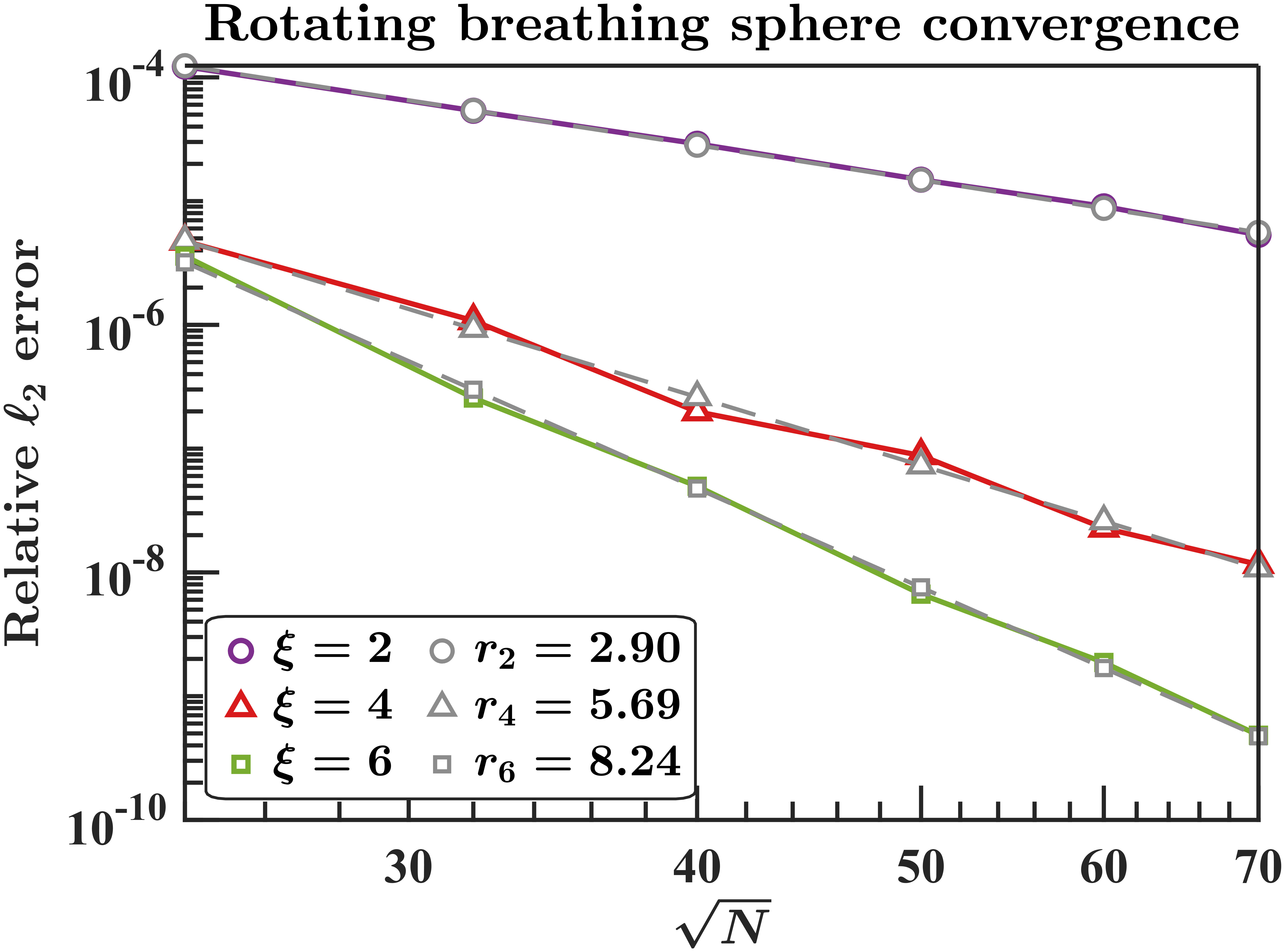}
\caption{Relative \(\ell_2\) error versus \(\sqrt N\) for the rotating
breathing sphere.}
\label{fig:rotating-breathing-sphere-convergence}
\end{figure}

\subsubsection{Anisotropically deforming ellipsoid}
\label{subsec:anisotropic-ellipsoid}

The anisotropic test maps the unit sphere according to
\[
  \bm x(t)=\left(a(t)y_1,b(t)y_2,c(t)y_3\right),
  \qquad \|\bm y\|=1,
\]
where
\[
\begin{aligned}
  a(t)&=1.30+0.08\sin t,\\
  b(t)&=0.90+0.06\cos(1.5t),\\
  c(t)&=0.70+0.05\sin(2t).
\end{aligned}
\]
We choose an exact concentration that is linear in the embedded coordinates,
\[
  c(\bm x(t),t)
  =
  e^{-t}\left(0.8+\bm\alpha\cdot\bm x(t)\right),
  \qquad
  \bm\alpha=(0.15,-0.08,0.05),
\]
and the Laplace--Beltrami term follows from
\[
  \Delta_{\mathcal M(t)}(\bm\alpha\cdot\bm x)
  =
  \bm\alpha\cdot\Delta_{\mathcal M(t)}\bm x.
\]
Unlike the radial tests, the three embedded coordinate directions evolve at
different rates.  Figure~\ref{fig:anisotropic-ellipsoid-convergence} shows
high-order convergence under this anisotropic deformation, again meeting or exceeding the predicted \(O(h^\xi)\) rate.  This is a stronger geometric
test than uniform scaling because the local metric and tangent frames evolve
nonuniformly.  It also tests the coordinate-wise hyperviscosity update.
Figure~\ref{fig:anisotropic-ellipsoid-residual} shows the final-time pointwise
error.

\begin{figure}[pos=htbp]
\centering
\includegraphics[width=0.55\textwidth]{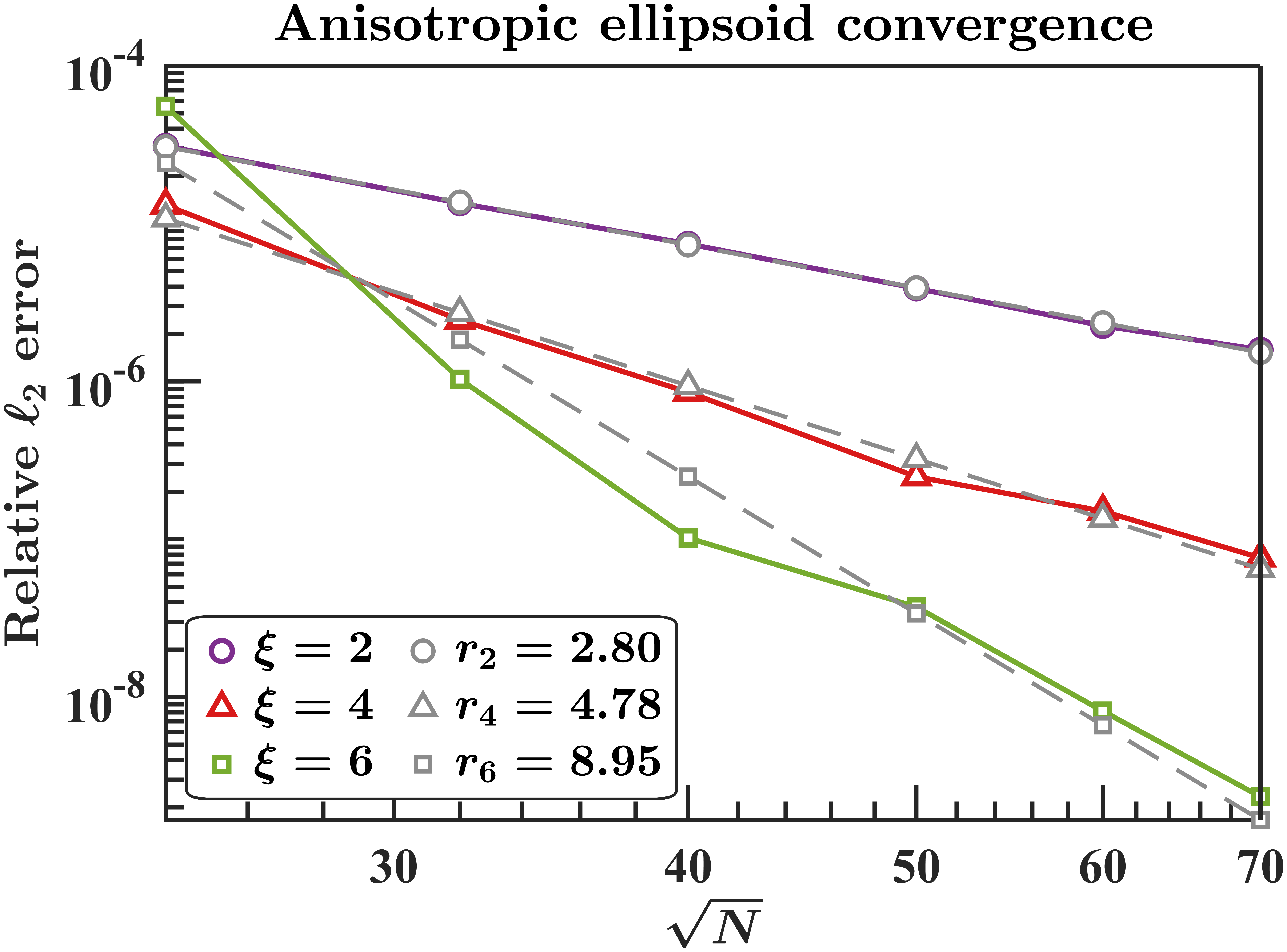}
\caption{Relative \(\ell_2\) error versus \(\sqrt N\) for the anisotropically
deforming ellipsoid.}
\label{fig:anisotropic-ellipsoid-convergence}
\end{figure}

% \begin{figure}[!htpb]
% \centering
% \includegraphics[width=0.78\textwidth]{figures/snapshot_visuals/moving_surface_adr_tp_manufactured_solution_snapshots_notitle_anisotropic_ellipsoid.png}
% \caption{Evolution of the manufactured concentration on the anisotropically
% deforming ellipsoid at \(t=0,0.06,0.12\), sampled at \(N=1600\) material
% sites.}
% \label{fig:anisotropic-ellipsoid-showcase}
% \end{figure}

\begin{figure}[pos=htbp]
\centering
\includegraphics[width=0.78\textwidth]{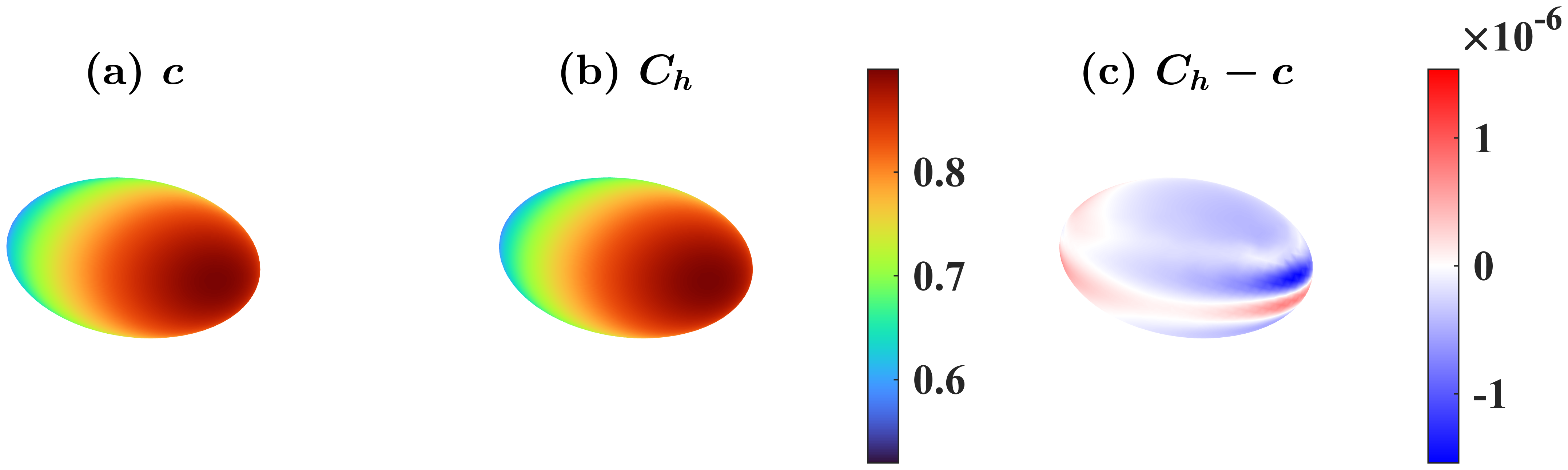}
\caption{Exact concentration, numerical concentration, and pointwise error at
\(T=0.12\) for the anisotropically deforming ellipsoid with \(\xi=6\) and
\(N=1600\).}
\label{fig:anisotropic-ellipsoid-residual}
\end{figure}

\subsubsection{Breathing torus}
\label{subsec:breathing-torus}

The final manufactured surface has nonzero genus and time-dependent major and
minor radii:
\[
  \bm x(\theta,\phi,t)
  =
  \left(
    (A(t)+B(t)\cos\theta)\cos\phi,\,
    (A(t)+B(t)\cos\theta)\sin\phi,\,
    B(t)\sin\theta
  \right),
\]
with
\[
  A(t)=1.35+0.08\sin t,
  \qquad
  B(t)=0.38+0.04\cos(1.3t).
\]
We take the exact solution to be
\[
  c(\theta,\phi,t)
  =
  e^{-t}
  \left[
    0.8+0.12\cos\theta+0.08\sin\phi
    +0.05\cos(\theta-2\phi)
  \right],
\]
and
\[
  \nabla_{\mathcal M(t)}\!\cdot\bm v
  =
  \frac{B'(t)}{B(t)}
  +
  \frac{A'(t)+B'(t)\cos\theta}
       {A(t)+B(t)\cos\theta}.
\]
The forcing uses the analytic toroidal Laplace--Beltrami operator.

The torus exposes a clear resolution threshold at high order.  We report the
\(\xi=4\) series for \(N\ge1024\) and the \(\xi=6\) series for \(N\ge1600\);
coarser clouds do not adequately resolve the corresponding stencils, so we
exclude them from the fitted rates.  Beyond this threshold,
Figure~\ref{fig:breathing-torus-convergence} recovers rates at least as large as \(\xi\), in agreement with the
moving-surface theorem.  The pre-asymptotic threshold
reflects stencil resolution rather than a loss of the asymptotic order.
Figure~\ref{fig:breathing-torus-residual} shows the final-time pointwise
error.

\begin{figure}[pos=htbp]
\centering
\includegraphics[width=0.55\textwidth]{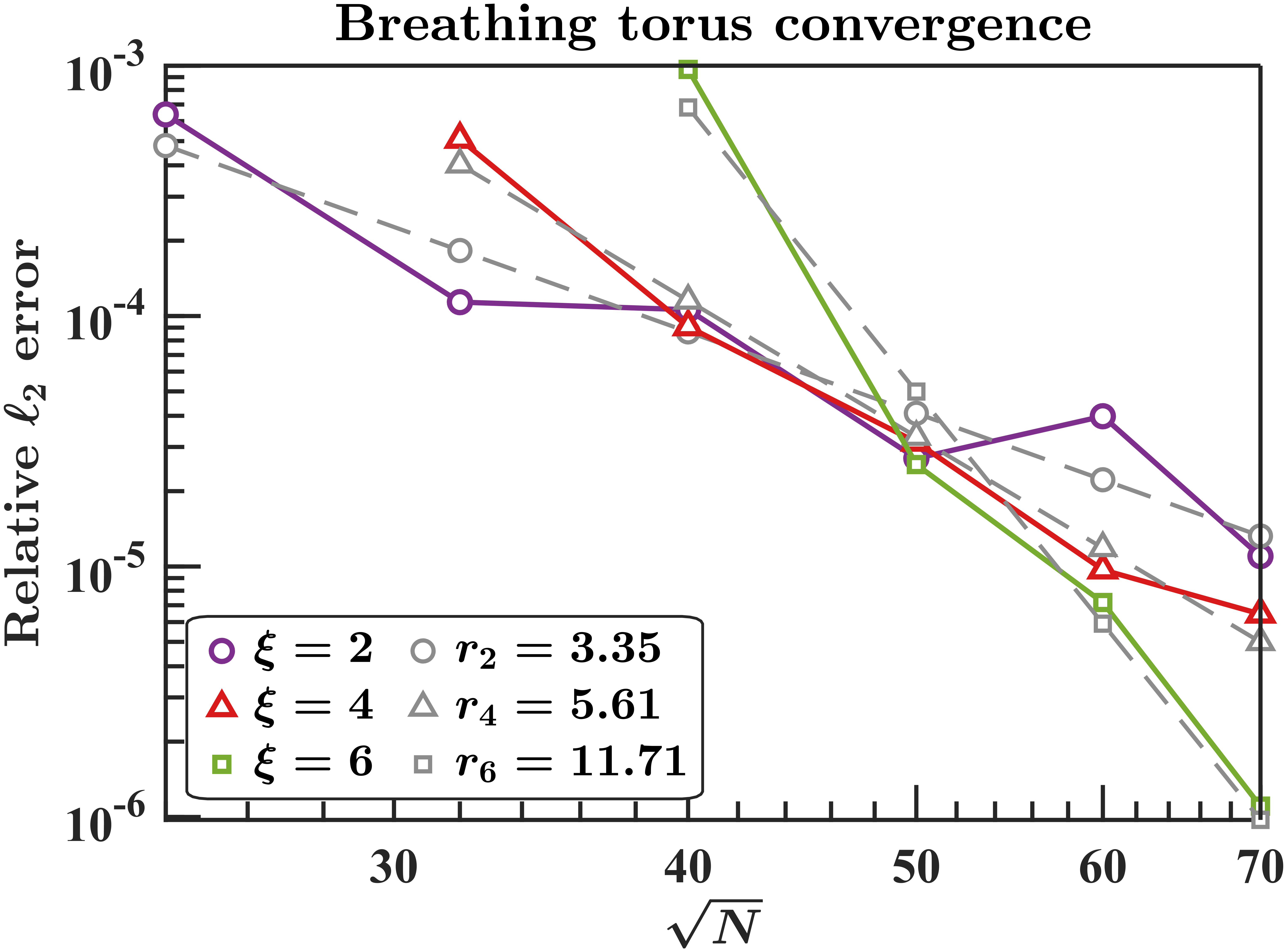}
\caption{Relative \(\ell_2\) error versus \(\sqrt N\) for the breathing
torus.  The high-order fits use only the resolved ranges stated in the text.}
\label{fig:breathing-torus-convergence}
\end{figure}

% \begin{figure}[!htpb]
% \centering
% \includegraphics[width=0.78\textwidth]{figures/snapshot_visuals/moving_surface_adr_tp_manufactured_solution_snapshots_notitle_breathing_torus.png}
% \caption{Evolution of the manufactured concentration on the breathing torus
% at \(t=0,0.06,0.12\), sampled at \(N=1600\) material sites.}
% \label{fig:breathing-torus-showcase}
% \end{figure}

\begin{figure}[pos=htbp]
\centering
\includegraphics[width=0.78\textwidth]{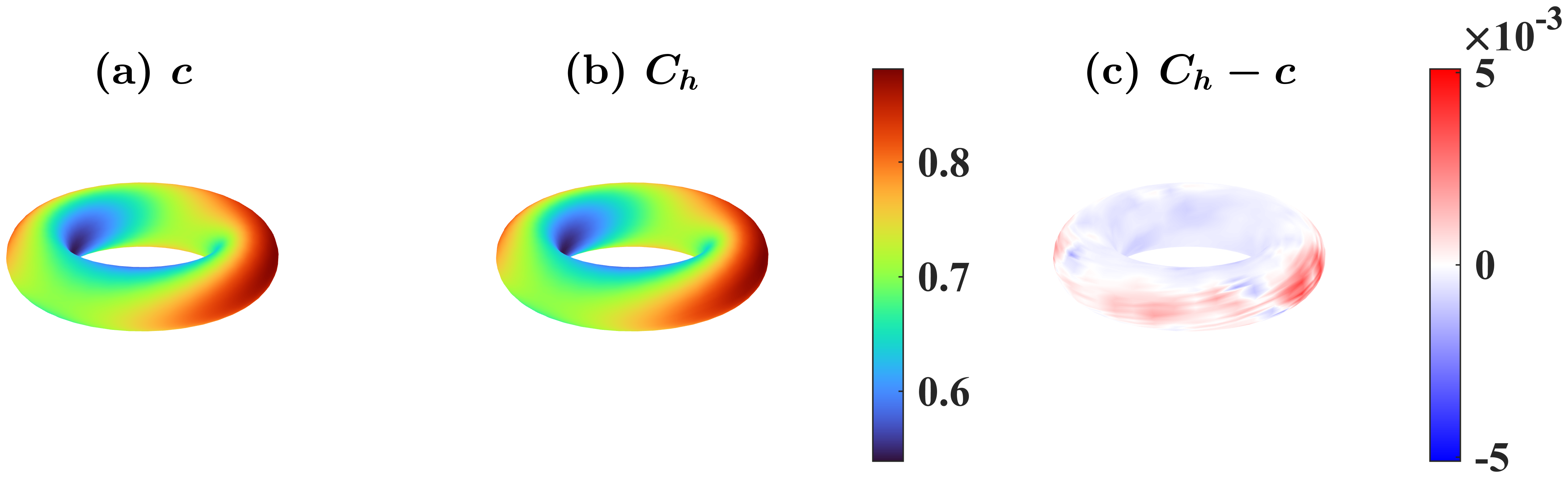}
\caption{Exact concentration, numerical concentration, and pointwise error at
\(T=0.12\) for the breathing torus with \(\xi=6\) and \(N=1600\).}
\label{fig:breathing-torus-residual}
\end{figure}

Across all five problems, the observed rates meet or exceed the
\(O(h^\xi)\) benchmark supplied by
Theorem~\ref{thm:moving-surface-tangent-rbffd}.  This agreement persists under
changing area, rigid motion, anisotropic deformation, and nonzero genus.  The
approximate-normal result,
Theorem~\ref{thm:approximate-normal-tangent-rbffd}, further shows why the
computed-normal tests in Appendix~\ref{app:supporting-experiments} retain the
same order when the tangent-plane error is \(O(h^\xi)\) or smaller.  The only
visible pre-asymptotic limitation occurs on the torus, where the high-order
stencils require a minimum spatial resolution before entering this regime.

\subsection{Cost of updating the moving-surface discretization}
\label{subsec:accelerated-update-timings}

The moving surface changes the local RBF-FD systems, the hyperviscosity
coefficients, and the global BDF system at every time level.  Recomputing all
three from scratch discards the strong temporal coherence of the discretization.
We therefore measure the savings from the three update strategies introduced in
Sections~\ref{subsec:moving-tangent-plane-stencils},
\ref{subsec:hyperviscosity-and-update}, and
\ref{sec:global-bdf-linear-algebra}.

\paragraph{Timing protocol}
We use the same five geometries, six spatial resolutions, and
$\xi=2,4,6$ for each comparison, giving 90 cases.  For a measured cost $t$, we
report the speedup
\[
  S=\frac{t_{\rm base}}{t_{\rm accel}},
\]
so $S>1$ indicates a reduction in cost.  The local RBF-FD and hyperviscosity
columns in Table~\ref{tab:combined-acceleration-speedups} use the complete
per-timestep cost because both updates affect work outside the global solve.
The global column isolates the sparse linear-solve cost.

\paragraph{Local RBF-FD weights}
Defect correction reduces the number of local factorizations by reusing the
previous stencil factors whenever the point configuration changes only mildly.
For these MATLAB calculations, however, the local update alone gives speedups
between $0.898$ and $0.990$.  The small local systems are already inexpensive,
so the cost of checking stencil membership and the defect-correction residual
is comparable to the saved factorization work.  Thus local reuse preserves the
current RBF-FD discretization at lower factorization count, but it is not by
itself a wall-clock acceleration here.

\paragraph{Hyperviscosity coefficients}
The geometric update of the hyperviscosity coefficients produces the largest
isolated savings.  Replacing repeated spectral calculations by the update in
\eqref{eq:gamma-predictor} reduces the complete timestep cost for every surface,
with speedups from $1.149$ on the breathing torus to $1.902$ on the anisotropic
ellipsoid.  This is the dominant source of the combined improvement on the four
genus-zero surfaces.

\paragraph{Global BDF solves}
The global defect-correction iteration also gives a consistent gain.  Across
the reference calculations, it removes all 2484 GMRES iterations required by
the baseline and reduces the sparse-solve cost for every geometry, with
speedups between $1.111$ and $1.874$.  The smaller effect on the complete
timestep reflects the fact that the global solve is only one part of the
moving-surface calculation.

\begin{table}[t]
\centering
\caption{Average per-timestep performance of the three acceleration
strategies and their combination.  The local and hyperviscosity columns report
full-timestep speedups, while the global column reports linear-solve speedup.
The final three columns give the baseline and accelerated full-timestep costs and their combined speedup.}
\scriptsize
\setlength{\tabcolsep}{2.5pt}
\label{tab:combined-acceleration-speedups}
\begin{tabular}{l c c c c c c}
\toprule
Geometry & local & hypervisc. & global solve &
baseline (ms) & all updates (ms) & speedup \\
\midrule
MCF sphere & \(0.941\) & \(1.846\) & \(1.874\) & \(368.3\) & \(236.3\) & \(1.559\) \\
IMCF sphere & \(0.990\) & \(1.798\) & \(1.281\) & \(355.8\) & \(215.0\) & \(1.655\) \\
Rotating breathing sphere & \(0.981\) & \(1.852\) & \(1.191\) & \(400.8\) & \(247.0\) & \(1.623\) \\
Anisotropic ellipsoid & \(0.960\) & \(1.902\) & \(1.211\) & \(446.5\) & \(262.4\) & \(1.701\) \\
Breathing torus & \(0.898\) & \(1.149\) & \(1.111\) & \(467.8\) & \(494.3\) & \(0.947\) \\
\bottomrule
\end{tabular}
\end{table}

\paragraph{Combined cost}
Across all test problems, the mean timestep cost falls from $407.8$ ms to $291.0$
ms, for an overall speedup of $1.402$.  The four genus-zero surfaces achieve
speedups between $1.559$ and $1.701$.  The breathing torus is the sole exception:
there the combined cost increases slightly, giving a speedup of $0.947$.
Figure~\ref{fig:combined-ellipsoid-timing} shows the dependence on resolution
and $\xi$ for the anisotropically deforming ellipsoid.  Taken together, these
results show that the hyperviscosity and global updates provide robust savings,
while the benefit of local defect correction depends on whether factor reuse
outweighs the cost of the local acceptance tests.

\begin{figure}[pos=htbp]
\centering
\includegraphics[width=0.62\textwidth]{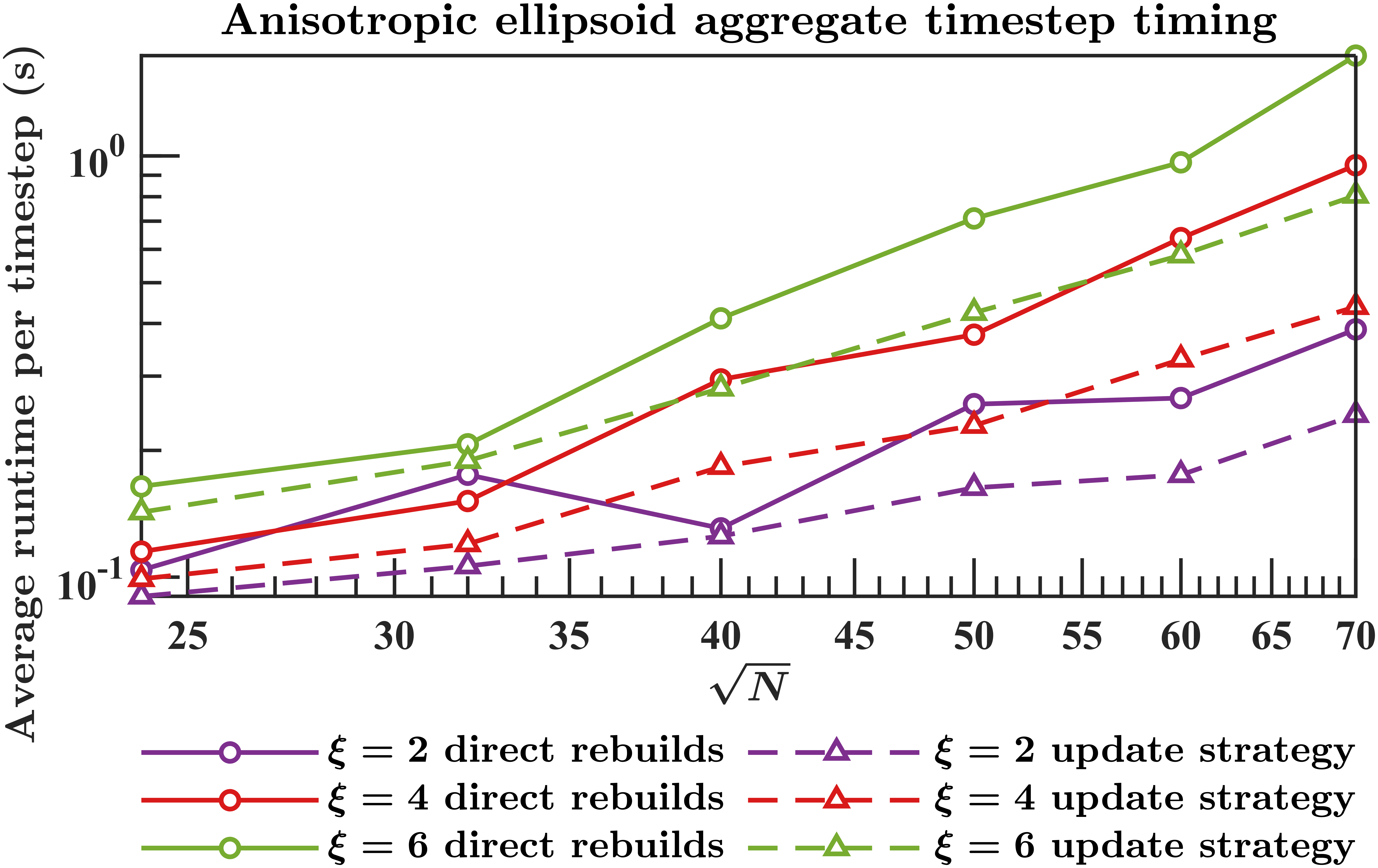}
\caption{Average runtime per timestep versus $\sqrt N$ for the
anisotropically deforming ellipsoid, with $\xi=2,4,6$ shown together.}
\label{fig:combined-ellipsoid-timing}
\end{figure}

\subsection{Comparisons with published methods}
\label{subsec:literature-comparisons}
We compare against two classes of evolving-surface methods.  The finite-element
and TraceFEM benchmarks use second-order spatial discretizations, so we ask how
coarse a \(\xi=6\) RBF-FD cloud can be while attaining the published error.
For the meshfree kernel comparisons, we match the PDE and the reported norm. Our method conserves mass on every problem.

\subsubsection{Evolving-surface FEM and TraceFEM}
\label{subsubsec:fem-literature-comparisons}

For each benchmark in Table~\ref{tab:literature-thresholds}, we increase \(N\)
until the \(\xi=6\) solution first falls below the published error.
Dziuk--Elliott \cite{DziukElliott2007EvolvingSurfaces} report absolute
\(L^\infty(0,T;L^2)\) errors, while Olshanskii--Xu
\cite{OlshanskiiXu2017TraceFEM} report absolute
\(L^2(0,T;L^2)\) errors; we retain the published norms.  None of these rigid,
scaled, or mildly deformed point clouds requires rearrangement.

\begin{table}[t]
\centering
\caption{Smallest \(\xi=6\) point clouds that attain the published
manufactured-solution errors of Dziuk--Elliott
\cite{DziukElliott2007EvolvingSurfaces} and Olshanskii--Xu
\cite{OlshanskiiXu2017TraceFEM}.}
\scriptsize
\setlength{\tabcolsep}{3pt}
\label{tab:literature-thresholds}
\begin{tabular}{l c c c c c c}
\toprule
Benchmark & \(N\) & \(h/h_{\rm lit}\) &
\(\Delta t/\Delta t_{\rm lit}\) &
lit. error & our error & our/lit. \\
\midrule
Dziuk--Elliott sphere & \(196\) & \(7.00\) & \(2.45\) &
\(1.225{\times}10^{-3}\) & \(1.054{\times}10^{-3}\) & \(0.860\) \\
Dziuk--Elliott ellipsoid & \(196\) & \(4.60\) & \(1.06\) &
\(2.295{\times}10^{-3}\) & \(1.087{\times}10^{-3}\) & \(0.474\) \\
Olshanskii--Xu translating sphere & \(256\) & \(3.54\) & \(0.157\) &
\(1.040{\times}10^{-2}\) & \(7.837{\times}10^{-3}\) & \(0.754\) \\
Olshanskii--Xu rotating sphere & \(384\) & \(2.89\) & \(0.837\) &
\(7.360{\times}10^{-3}\) & \(9.204{\times}10^{-4}\) & \(0.125\) \\
Olshanskii--Xu shrinking sphere & \(256\) & \(7.09\) & \(0.314\) &
\(3.038{\times}10^{-3}\) & \(6.735{\times}10^{-4}\) & \(0.222\) \\
\bottomrule
\end{tabular}
\end{table}

Our \(\xi=6\) discretization reaches all five published error levels on
substantially coarser point clouds.  We interpret this as an
accuracy-versus-resolution comparison rather than a wall-clock comparison,
since the methods use different approximation spaces and linear solvers.  Appendix~\ref{app:literature-controls} gives a resolution- and
timestep-matched \(\xi=2\) control together with the Olshanskii--Xu
non-manufactured mass test.

\subsubsection{Meshfree kernel methods}
\label{subsubsec:kernel-literature-comparisons}

Petras et.\ al.~\cite{PetrasLingPiretRuuth2019LSRBFCPM} report relative final-time
\(L^\infty\) errors for the oscillating ellipsoid
\[
  \bm x(\theta,\phi,t)
  =
  \left(
    \sqrt{1+\sin(2t)}\cos\theta\cos\phi,\,
    \sin\theta\cos\phi,\,
    \sin\phi
  \right),
\]
with exact solution \(c=e^{-6t}x_1x_2\), diffusion coefficient \(\nu=1\),
and \(\Delta t_{\rm lit}=2\Delta x^2\).  We choose \(N\) so that
\(h=\sqrt{|\mathcal M(0)|/N}\) matches \(\Delta x\).  For the two
\(\Delta x=0.2\) cases, we use a smaller timestep to accommodate the BDF3
startup history; all remaining cases use the published timestep.

\begin{table}[t]
\centering
\caption{Relative final-time \(L^\infty\) errors for the oscillating
ellipsoid of Petras--Ling--Piret--Ruuth
\cite{PetrasLingPiretRuuth2019LSRBFCPM}.}
\scriptsize
\setlength{\tabcolsep}{3pt}
\label{tab:petras-ling-ellipsoid}
\begin{tabular}{c c c c c c c c}
\toprule
\(\Delta x\) & \(T\) & \(N\) & \(h/\Delta x\) &
\(\Delta t/\Delta t_{\rm lit}\) & lit. error & our error & our/lit. \\
\midrule
\(0.20\) & \(0.08\) & \(314\) & \(1.000\) & \(0.333\) &
\(8.89{\times}10^{-2}\) & \(1.069{\times}10^{-4}\) & \(0.00120\) \\
\(0.20\) & \(0.16\) & \(314\) & \(1.000\) & \(0.667\) &
\(1.76{\times}10^{-1}\) & \(4.491{\times}10^{-4}\) & \(0.00255\) \\
\(0.10\) & \(0.08\) & \(1257\) & \(1.000\) & \(1.000\) &
\(2.60{\times}10^{-2}\) & \(1.050{\times}10^{-5}\) & \(0.000404\) \\
\(0.10\) & \(0.16\) & \(1257\) & \(1.000\) & \(1.000\) &
\(4.91{\times}10^{-2}\) & \(4.483{\times}10^{-5}\) & \(0.000913\) \\
\(0.05\) & \(0.08\) & \(5027\) & \(1.000\) & \(1.000\) &
\(6.80{\times}10^{-3}\) & \(3.106{\times}10^{-7}\) & \(0.0000457\) \\
\(0.05\) & \(0.16\) & \(5027\) & \(1.000\) & \(1.000\) &
\(1.26{\times}10^{-2}\) & \(7.927{\times}10^{-7}\) & \(0.0000629\) \\
\bottomrule
\end{tabular}
\end{table}

Chen and Ling~\cite{ChenLing2020KernelHeatTransport} consider
\eqref{eq:results-pde} on
\[
  \left(\frac{x_1}{1+0.25\sin t}\right)^2+x_2^2+x_3^2=1,
  \qquad 0\le t\le4,
\]
with
\[
  \bm v(\bm x,t)
  =
  \frac{0.25\cos t}{1+0.25\sin t}\,x_1\bm e_1,
  \qquad
  c(\bm x,t)=e^{-t}x_1x_2,
  \qquad \nu=1.
\]
Their Table~3 reports \(L^\infty_tL^2_x\) errors in the surface gradient
and Laplace--Beltrami operator.  We evaluate the same quantities with our
surface quadrature and discrete surface-gradient and Laplace--Beltrami
operators.  The exact solution supplies the BDF3 startup history, and we retain
the published choice \(\Delta t=h\).  The two coarsest point clouds do not
resolve a \(\xi=6\) stencil, so we compare only \(h\le1/4\).

\begin{table}[t]
\centering
\caption{Derivative-norm comparison with the moving-ellipsoid benchmark of
Chen--Ling \cite{ChenLing2020KernelHeatTransport}.  The columns report
\(L^\infty_tH^1_x\) and \(L^\infty_tH^2_x\) seminorm errors; CL denotes
Chen--Ling.}
\scriptsize
\setlength{\tabcolsep}{3pt}
\label{tab:chen-ling-ellipsoid}
\begin{tabular}{c c c c c c c c}
\toprule
\(h=\Delta t\) & \(N\) & CL \(H^1\) & our \(H^1\) &
our/CL & CL \(H^2\) & our \(H^2\) & our/CL \\
\midrule
\(0.25\) & \(312\) &
\(1.83{\times}10^{-4}\) & \(1.33{\times}10^{-3}\) & \(7.30\) &
\(4.36{\times}10^{-4}\) & \(4.13{\times}10^{-3}\) & \(9.48\) \\
\(0.125\) & \(1206\) &
\(4.01{\times}10^{-5}\) & \(5.66{\times}10^{-5}\) & \(1.41\) &
\(9.61{\times}10^{-5}\) & \(1.41{\times}10^{-4}\) & \(1.47\) \\
\(0.0625\) & \(4836\) &
\(9.77{\times}10^{-6}\) & \(8.50{\times}10^{-6}\) & \(0.870\) &
\(2.34{\times}10^{-5}\) & \(2.00{\times}10^{-5}\) & \(0.856\) \\
\bottomrule
\end{tabular}
\end{table}

At every matched spacing, the \(\xi=6\) errors in
Table~\ref{tab:petras-ling-ellipsoid} are orders of magnitude smaller than the
published Petras--Ling--Piret--Ruuth errors.  The Chen--Ling derivative norms
are more stringent: our errors are larger on the two coarser resolved clouds,
then converge to and slightly improve upon the global-kernel errors at the
finest spacing.  These seminorm comparisons directly test the differentiated
solution and complement the final-time scalar-error results above.

\subsection{Conservation of total mass}
\label{subsec:mass-conservation-results}

The same surface quadrature used to evaluate the solution error defines the
discrete total mass
\[
  M_h(t)=Q_h^t[C_h(t)].
\]
Let \(\Delta M_\star\) denote the change in total mass prescribed by the
continuous conservation law over \([0,T]\).  Thus \(\Delta M_\star=0\) for a
source-free conservative problem, while a problem with reaction or forcing
has the corresponding nonzero prescribed change.  We measure the error in
the computed total-mass evolution by
\begin{equation}
  E_{\mathrm{cons}}(T)
  =
  \frac{|M_h(T)-M_h(0)-\Delta M_\star|}
       {\max\{|M_h(0)|+|\Delta M_\star|,10^{-14}\}}.
  \label{eq:results-relative-conservation-error}
\end{equation}
This quantity directly measures whether the numerical change in total mass
equals the change required by the PDE.  Table~\ref{tab:mass-conservation}
reports the largest value over every resolved \((\xi,N)\) pair for each
convergence study.  For the literature comparisons, the entries correspond
to the configurations in Tables~\ref{tab:literature-thresholds}--
\ref{tab:chen-ling-ellipsoid}; where a comparison contains several rows, we
again report the largest value.

\begin{table}[t]
\centering
\caption{Relative error in the computed evolution of total mass.  Entries for
the five convergence studies are maxima over all reported values of \(\xi\)
and \(N\); entries for literature comparisons are maxima over the
configurations reported in the corresponding comparison tables. For the latter, the label (C) indicates a conservative method, (NC) indicates a non-conservative method (many of which nevertheless have convergent mass errors).}
\label{tab:mass-conservation}
\begin{tabular}{l c}
\toprule
Problem & Maximum \(E_{\mathrm{cons}}(T)\) \\
\midrule
Contracting mean-curvature-flow sphere & \(5.42{\times}10^{-16}\) \\
Expanding inverse-mean-curvature-flow sphere & \(1.24{\times}10^{-15}\) \\
Rotating breathing sphere & \(1.33{\times}10^{-15}\) \\
Anisotropically deforming ellipsoid & \(6.68{\times}10^{-16}\) \\
Breathing torus & \(5.25{\times}10^{-16}\) \\
\midrule
Dziuk--Elliott stationary sphere (C) & \(9.35{\times}10^{-17}\) \\
Dziuk--Elliott moving ellipsoid (C) & \(0\) \\
Olshanskii--Xu translating sphere (N) & \(1.41{\times}10^{-16}\) \\
Olshanskii--Xu rotating sphere (NC) & \(0\) \\
Olshanskii--Xu shrinking sphere (NC) & \(1.41{\times}10^{-16}\) \\
Petras--Ling--Piret--Ruuth oscillating ellipsoid (NC) & \(1.45{\times}10^{-14}\) \\
Chen--Ling moving ellipsoid (NC) & \(2.32{\times}10^{-16}\) \\
\bottomrule
\end{tabular}
\end{table}

Every convergence and literature-comparison run satisfies its prescribed
total-mass evolution to roundoff.  The separate projection study in
Appendix~\ref{subsec:mass-correction-results} examines the size of the
correction and its effect on the pointwise solution error.

\subsection{Transport on a fluid-driven biconcave membrane}
\label{subsec:rbc-capstone}
The preceding experiments prescribe the surface motion analytically.  We
conclude with a one-way coupled experiment in which a three-dimensional
fluid--structure interaction calculation first generates the motion of a
biconcave membrane, after which a passive surface concentration is evolved on
the recorded trajectory.  The concentration does not feed back into the fluid
or membrane mechanics.

Let \(\bm u_f\) and \(p\) denote the Eulerian fluid velocity and pressure, and
let \(\bm X(\bm s,t)\) map the reference membrane \(\Gamma_0\) into its current
configuration \(\Gamma(t)\).  The fluid satisfies
\begin{equation}
  \rho\left(\partial_t\bm u_f+\bm u_f\!\cdot\nabla\bm u_f\right)
  =-\nabla p+\mu_f\Delta\bm u_f+\bm g+\bm f_{\Gamma},
  \qquad \nabla\!\cdot\bm u_f=0,
  \label{eq:rbc-capstone-fluid}
\end{equation}
where \(\bm g\) is the imposed channel forcing.  Fluid and membrane are coupled
by the regularized immersed-boundary operators
\begin{align}
  \bm f_{\Gamma}(\bm x,t)
  &=\int_{\Gamma_0}\bm F(\bm s,t)
   \delta_h\!\left(\bm x-\bm X(\bm s,t)\right)\,dA_0(\bm s),\label{eq:rbc-capstone-ib-coupling1}\\  
  \partial_t\bm X(\bm s,t)
  &=\int_{\Omega}\bm u_f(\bm x,t)
   \delta_h\!\left(\bm x-\bm X(\bm s,t)\right)\,d\bm x.
  \label{eq:rbc-capstone-ib-coupling2}
\end{align}

The second relation imposes no slip.  We discretize these equations with the
hybrid finite-difference/finite-element immersed-boundary formulation of
Griffith and Luo~\cite{Peskin2002IB,GriffithLuo2017HybridIB}, using a linear
finite-element surface mesh and regularized-delta interaction quadrature.

The membrane mechanics are defined by the discrete energy
\begin{equation}
\begin{split}
  E_h(\bm X)
  ={}&\frac{\kappa_s}{2}\sum_{e}(\ell_e-\ell_e^0)^2
  +\frac{\kappa_b}{2}\sum_e \ell_e^0(\theta_e-\theta_e^0)^2 \\
  &+\frac{\kappa_A}{2}\left(\frac{A-A_0}{A_0}\right)^2
  +\frac{\kappa_V}{2}\left(\frac{V-V_0}{V_0}\right)^2.
  \label{eq:rbc-capstone-membrane-energy}
\end{split}
\end{equation}
Here \(\ell_e\) and \(\theta_e\) are the current length and dihedral angle of
edge \(e\), a superscript zero denotes the stress-free biconcave configuration,
and \(A\) and \(V\) are the total membrane area and enclosed volume.  This
spring--hinge representation follows the standard decomposition of
triangulated red-cell energies into in-plane, bending, area, and volume terms
\cite{FedosovCaswellKarniadakis2010RBC}.  If \(A_i^0\) is the reference lumped
area at node \(i\), the force density supplied to the interaction operator is
\begin{equation}
  \widehat{\bm F}_i
  =\frac{\bm F_i-\overline{\bm F}}{A_i^0},
  \qquad
  \bm F_i=-\nabla_{\bm X_i}E_h,
  \qquad
  \overline{\bm F}=\frac{1}{N_f}\sum_{j=1}^{N_f}\bm F_j.
  \label{eq:rbc-capstone-membrane-force}
\end{equation}
Here \(N_f\) is the number of membrane nodes.  Subtracting
\(\overline{\bm F}\) removes any residual resultant introduced by the numerical
evaluation of the discrete energy gradient without tethering the membrane.  The reference
surface is the Evans--Fung biconcave profile
\cite{EvansFung1972ErythrocyteGeometry}.  Appendix~\ref{app:rbc-capstone-setup}
gives the profile, all physical and numerical parameters, and the precise
IBAMR configuration.

The recorded trajectory supplies the evolving surface \(\mathcal M(t)\) and
its material velocity \(\bm v\).  On this surface we solve
\begin{equation}
  D_t c+c\nabla_{\mathcal M(t)}\!\cdot\bm v
  =\mu\Delta_{\mathcal M(t)}c,
  \qquad
  \mu=2\times10^{-3},
  \qquad 0<t\le2,
  \label{eq:rbc-capstone-pde}
\end{equation}
with neither reaction nor forcing.  To initialize two localized tracer
patches, let \(\bm u\in\mathbb S^2\) denote the fixed material coordinate of
the spherical geometry model and define
\(d_j(\bm u)=\arccos(\bm u\cdot\widehat{\bm q}_j)\), where
\begin{equation}
  \widehat{\bm q}_1
  =\frac{(0.25,0.80,0.54)}{\lVert(0.25,0.80,0.54)\rVert_2},
  \qquad
  \widehat{\bm q}_2
  =\frac{(-0.55,-0.20,0.81)}{\lVert(-0.55,-0.20,0.81)\rVert_2}.
\end{equation}
The initial concentration is
\begin{equation}
  c(\bm u,0)
  =1+\exp\!\left[-\left(\frac{d_1(\bm u)}{0.28}\right)^2\right]
  +0.65\exp\!\left[-\left(\frac{d_2(\bm u)}{0.22}\right)^2\right].
  \label{eq:rbc-capstone-initial}
\end{equation}

The trajectory is reconstructed with the spherical geometry model of
Section~\ref{subsec:global-parametric-sbf-models}, and
\eqref{eq:rbc-capstone-pde} is advanced with the method developed in
Sections~\ref{sec:methods} and~\ref{sec:global-bdf-linear-algebra}.  No exact
solution is available, so we assess the calculation through geometric,
conservation, and solver diagnostics.  Over \(0\le t\le2\), the membrane
centroid travels 0.4332 channel units and its streamwise thickness increases
from 0.1180 to 0.2349.  Its relative area and volume changes are
\(8.29\times10^{-6}\) and \(-9.75\times10^{-5}\), respectively.  The marker
quality never exceeds 1.9149, below the rearrangement threshold, and hence no
rearrangement is triggered.

The final concentration remains positive in \([0.9997,1.1057]\).  Its relative
total-mass conservation error is \(3.80\times10^{-16}\), while the largest
nodal change made by the mass projection at any step is
\(2.06\times10^{-8}\).  Thus the calculation preserves the source-free mass
law to roundoff without materially changing the transported field.  The local
defect update occurs \(99.961\%\) of the moving stencils, and every global
linear system is solved purely by global defect correction without triggering the GMRES fallback; the
largest relative linear residual is \(4.42\times10^{-11}\).

\begin{figure}[pos=htbp]
\centering
\includegraphics[width=0.78\textwidth]{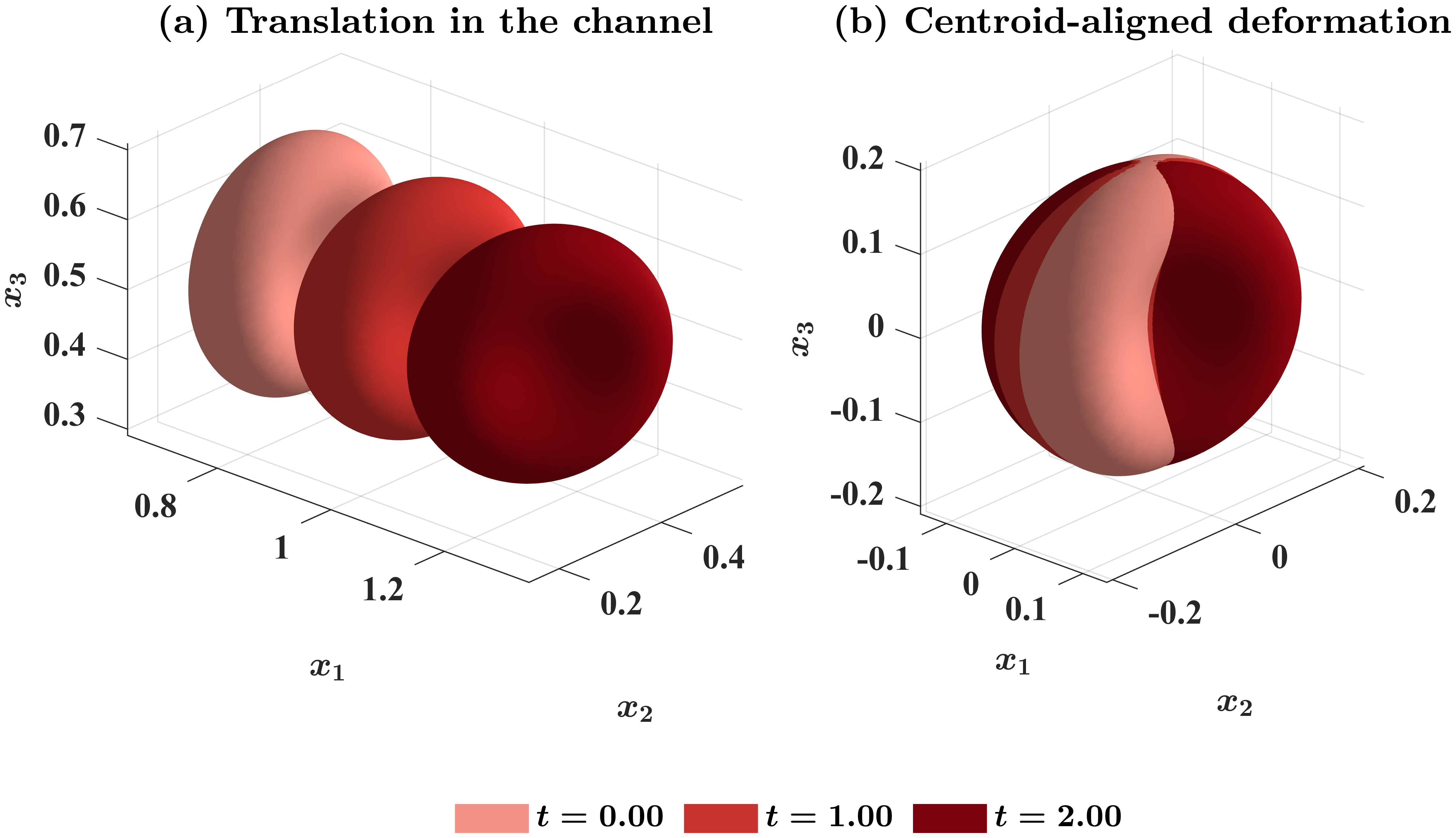}
\caption{Fluid-driven motion of the biconcave membrane at \(t=0,1,2\).
Panel (a) retains translation through the channel, while panel (b) removes the
centroid motion to isolate deformation.  The spherical geometry model is
evaluated at \(10\,000\) material sites for visualization.}
\label{fig:rbc-capstone-motion}
\end{figure}

\begin{figure}[pos=htbp]
\centering
\includegraphics[width=0.78\textwidth]{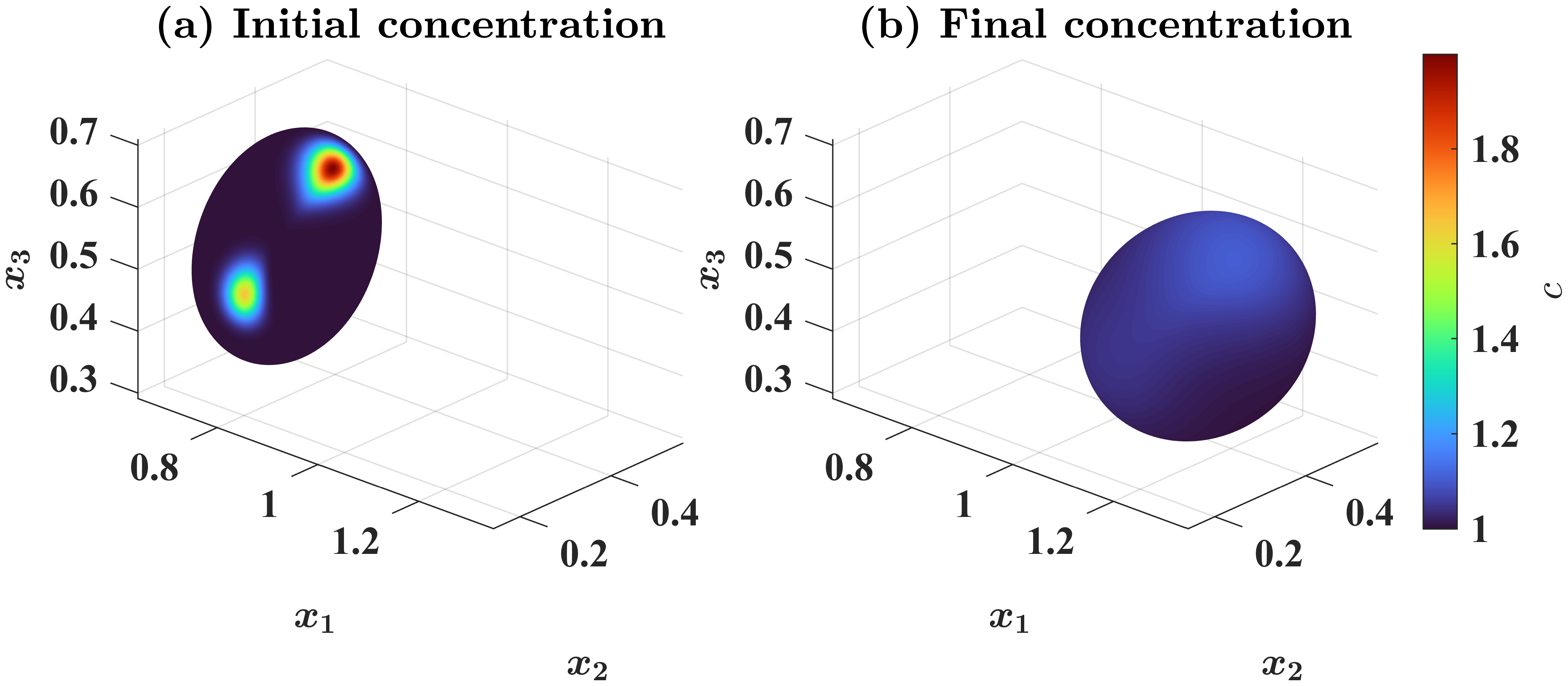}
\caption{Surface concentration at \(t=0\) and \(t=2\) for
\eqref{eq:rbc-capstone-pde}.  The panels use identical spatial limits and
color scales.  The solution at the \(N=2562\) PDE markers is interpolated in
material coordinates to the \(10\,000\)-site visualization cloud.}
\label{fig:rbc-capstone-transport}
\end{figure}

\section{Concluding remarks}
\label{sec:conclusions}
We presented a high-order accurate, globally conservative Lagrangian-Eulerian RBF-FD framework for the simulation of advection–diffusion problems on moving surfaces. Our method combined RBF-FD tangent plane discretizations of surface differential operators, a Lagrangian treatment of the material derivative, hyperviscosity stabilization of the surface divergence operator, and a parametric geometric model that played several support roles: supplying normal vectors (for RBF-FD), quadrature weights (for conservation), and surface samples (for Lagrangian rearrangement and semi-Lagrangian reconstruction). In addition, we leveraged the movement of the surface through hyperviscosity coefficient updates, local defect correction for updating the RBF-FD weights, and global defect correction as a preconditioner (but often a solver) for the sparse linear systems. We also presented theoretical results for tangent plane RBF-FD on moving domains.

Our results showed high-order convergence across five geometries involving contraction, expansion, rigid motion, anisotropic deformation, and
a surface with nonzero genus. Our results closely agreed with or exceeded our predicted theoretical convergence rates. We also showed physically plausible behavior and conservation on a more practical problem involving advection-diffusion on an elastic structure immersed in an incompressible fluid. Finally, our acceleration scheme showed speedups over direct updates, with the defect correction completely eliminating the need for GMRES.  We also matched or exceeded FEM and TraceFEM results on substantially coarser point clouds while comparing favorably with meshfree kernel results in their reported norms.

For future work, we will explore fluid flow on moving surfaces and coupled bulk-surface problems, both of which have widespread applications across engineering and the sciences.

\appendix

\section{Supporting numerical experiments}
\label{app:supporting-experiments}

This appendix collects experiments that validate the geometry, conservation,
and marker maintenance components used by the moving-surface solver.  These
components are important when a surface is supplied only through samples or
when a Lagrangian cloud becomes poorly distributed, but they are logically
separate from the convergence and performance claims in
Section~\ref{sec:numerical-experiments}.  We also record the complete numerical
configuration of the fluid-driven membrane experiment so that the external
fluid--structure trajectory and the subsequent surface transport calculation
can be reproduced independently.

\subsection{Estimated normals}
\label{subsec:normal-estimation-results}

The manufactured convergence studies use analytic normals so that the
differential discretization can be assessed independently of geometry error.
Here we replace those normals by estimates from the nodal cloud.  We compare
local PCA normals with normals obtained from the reduced parametric SBF model
of Section~\ref{subsec:global-parametric-sbf-models}.  The two test surfaces
are the anisotropically deforming ellipsoid and the breathing torus.

The PCA estimate at a node is the unit eigenvector associated with the smallest
eigenvalue of the covariance matrix of its 32 nearest neighbors.  Its sign is
oriented consistently with the analytic normal before the error is measured.
The SBF estimate is obtained by differentiating the reduced global geometry
model in material coordinates and normalizing the cross product of its two
tangent vectors.  Thus PCA uses only the instantaneous embedded cloud, whereas
the SBF construction also uses the known material parametrization.

For an order-eight geometry model, the nominal control-set size
\(M_{\rm th}\) is chosen from
\[
  H_M^8\le 0.1h^\xi,
\]
where \(H_M\) is the material fill distance of the selected centers.  The
computations use
\[
  M=\min\{N,\max\{48,\lceil M_{\rm th}/3\rceil\}\}.
\]
Thus the geometric approximation is kept below the target PDE error while
avoiding an \(N\times N\) global interpolation problem.

Let \(\bm n_i(t)\) and \(\bm n_i^\star(t)\) denote an estimated and analytic
unit normal.  For each run we record the largest-in-time RMS angle
\[
  \theta_{\rm rms}
  =
  \frac{180}{\pi}\max_n
  \left[
    \frac{1}{N}\sum_{i=1}^N
    \arccos^2\!\left(
      \min\{1,\max\{-1,\bm n_i(t_n)\!\cdot\bm n_i^\star(t_n)\}\}
    \right)
  \right]^{1/2}.
\]
The PDE-error ratio is
\(R_E=E_{\rm estimated}/E_{\rm exact}\), with all three normal choices using
the same nodes, time step, differentiation-matrix update, hyperviscosity, and
linear solver.  Consequently, \(R_E\) isolates the effect of the supplied
normal field.

\begin{table}[!htbp]
\centering
\caption{Effect of estimated normals on the PDE error.  The angle columns are
the largest values of \(\theta_{\rm rms}\), in degrees, over the tested
resolutions; the ratio columns report the largest value of \(R_E\).  The SBF
\(M\) entry gives the control-set range from the coarsest to the finest nodal
resolution.}
\scriptsize
\setlength{\tabcolsep}{3pt}
\label{tab:final-normal-source-accuracy}
\begin{tabular}{l c c c c c c}
\toprule
Geometry & \(\xi\) & SBF \(M\) & SBF angle &
SBF ratio & PCA angle & PCA ratio \\
\midrule
Anisotropic ellipsoid & \(2\) & \(48\text{--}48\) &
\(1.81{\times}10^{-3}{}^\circ\) & \(1.002\) & \(3.05^\circ\) & \(10.4\) \\
Anisotropic ellipsoid & \(4\) & \(48\text{--}48\) &
\(1.81{\times}10^{-3}{}^\circ\) & \(1.038\) & \(3.05^\circ\) & \(103.5\) \\
Anisotropic ellipsoid & \(6\) & \(59\text{--}131\) &
\(8.15{\times}10^{-4}{}^\circ\) & \(1.000\) & \(3.05^\circ\) & \(931.9\) \\
Breathing torus & \(2\) & \(48\text{--}48\) &
\(3.68{\times}10^{-2}{}^\circ\) & \(1.031\) & \(1.92^\circ\) & \(2.33\) \\
Breathing torus & \(4\) & \(98\text{--}147\) &
\(8.01{\times}10^{-4}{}^\circ\) & \(1.000\) & \(1.92^\circ\) & \(5.43\) \\
Breathing torus & \(6\) & \(356\text{--}482\) &
\(5.88{\times}10^{-6}{}^\circ\) & \(1.000\) & \(1.92^\circ\) & \(5.32\) \\
\bottomrule
\end{tabular}
\end{table}

Table~\ref{tab:final-normal-source-accuracy} shows that the PCA geometry error
is essentially independent of the requested PDE order.  It is therefore a
small perturbation for \(\xi=2\), but increasingly dominates the truncation
error as \(\xi\) increases.  On the ellipsoid, the resulting PDE error is more
than nine hundred times the exact-normal error for \(\xi=6\).  In contrast,
the SBF normal error decreases as the control set is enlarged and the SBF and
exact-normal PDE errors are indistinguishable to the reported precision on the
highest-order runs.

This accuracy does not require a full \(N\)-center geometry solve.  For
\(\xi=6\), only 59--131 SBF controls are used on the ellipsoid and 356--482 on
the torus.  The corresponding mean per-timestep costs are \(176.1\) ms with
either exact or SBF normals on the ellipsoid, and \(626.4\) ms with exact
normals versus \(641.2\) ms with SBF normals on the torus.  The latter is a
2.4\% overhead.  These timings include control selection, factorization,
coefficient recovery, and normal evaluation, and not merely the final tangent
cross products.

\begin{figure}[pos=htbp]
\centering
\includegraphics[width=0.55\textwidth]{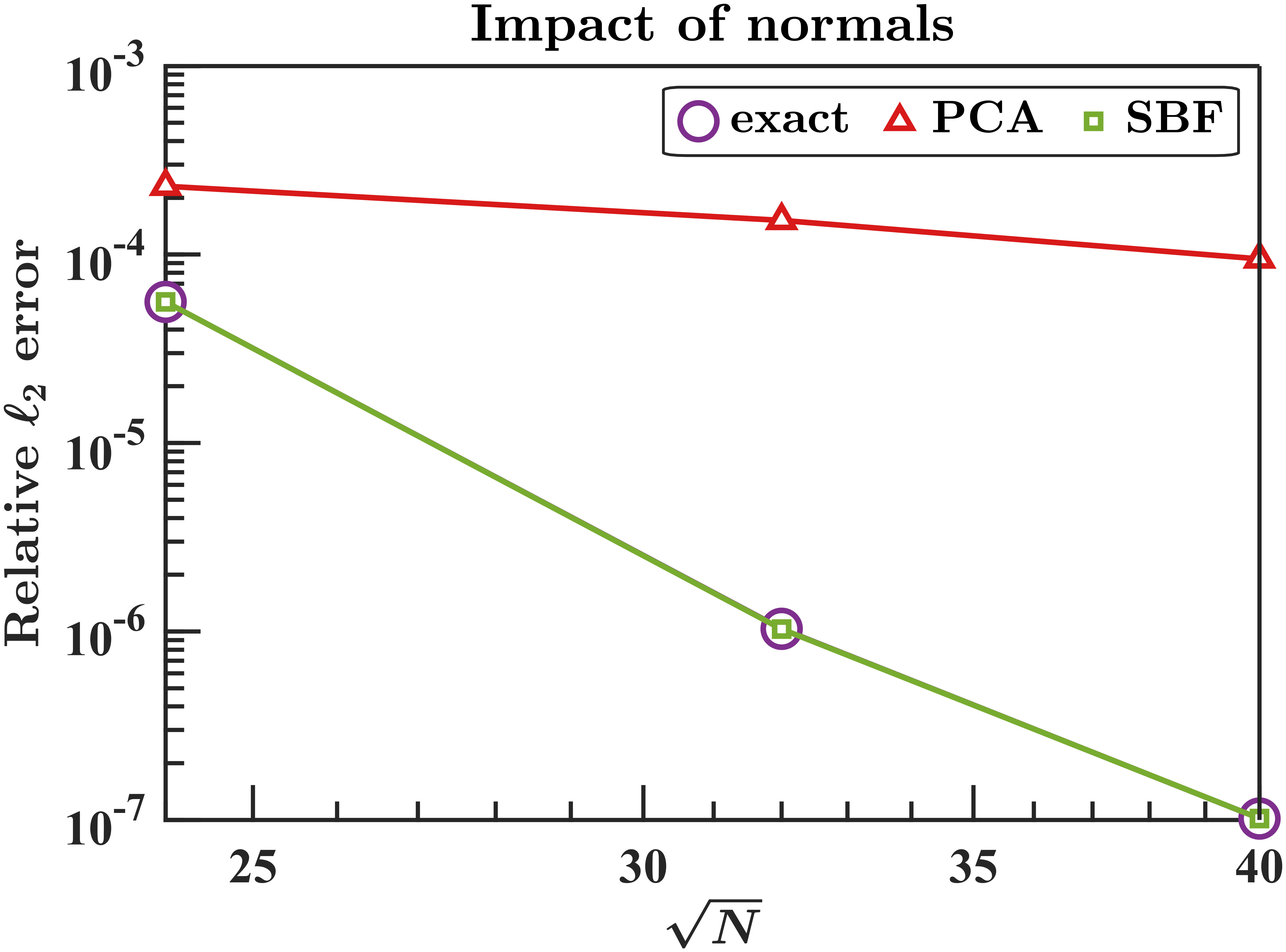}
\caption{Impact of normals on the \(\xi=6\) ellipsoid solve.  SBF normals
track the exact-normal curve, whereas PCA normal error limits the PDE
accuracy.}
\label{fig:final-normal-ellipsoid-xi6-error}
\end{figure}

\begin{figure}[pos=htbp]
\centering
\includegraphics[width=0.55\textwidth]{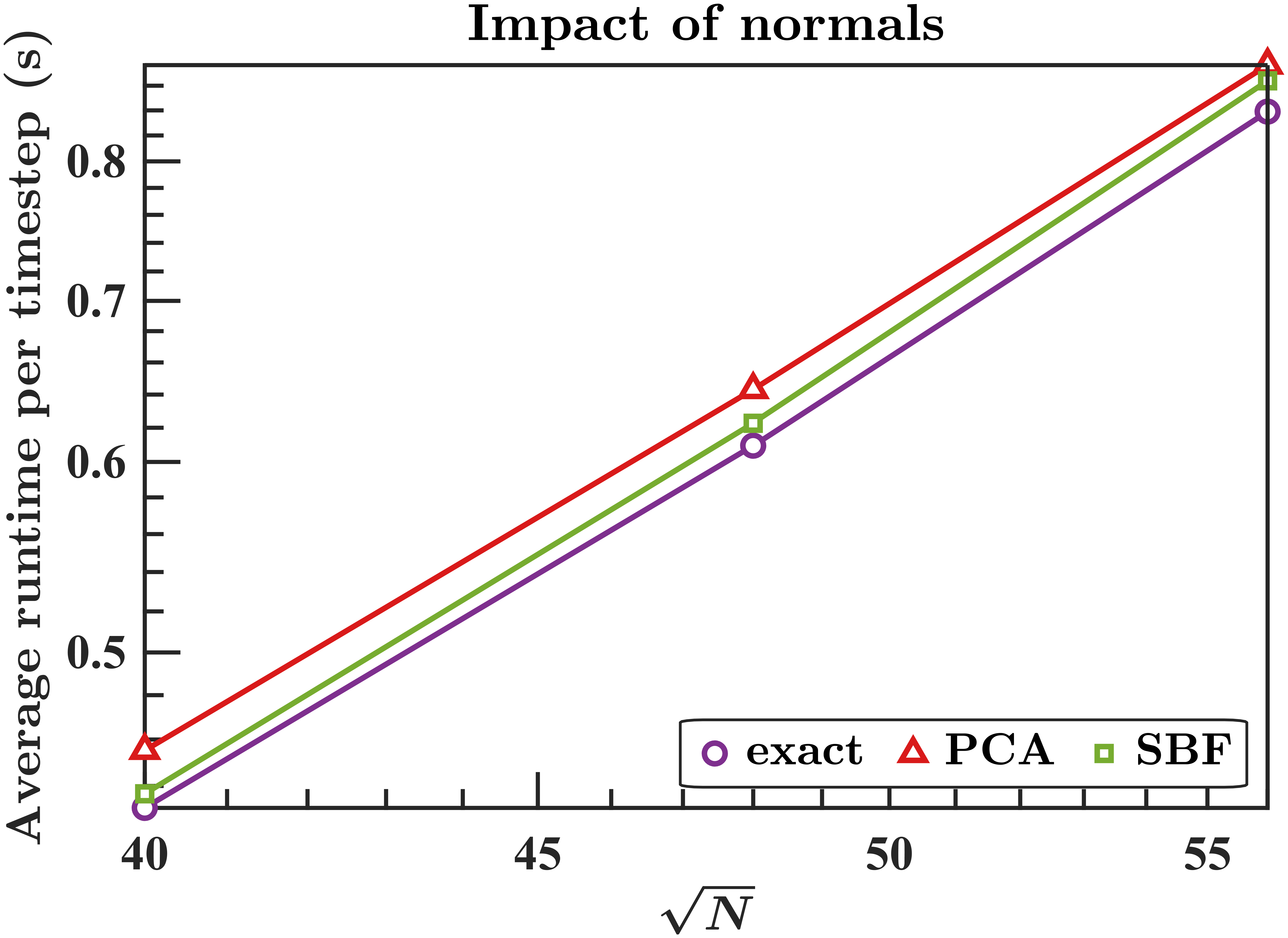}
\caption{Average runtime per timestep for the \(\xi=6\) breathing-torus
normal comparison.}
\label{fig:final-normal-torus-xi6-timing}
\end{figure}

%\clearpage
\subsection{Geometry quadrature and mass projection}
\label{subsec:mass-correction-results}

We next isolate the scalar mass projection from
Section~\ref{subsec:mass-conservation-projection}.  Projected and unprojected
runs use identical SBF normals, differentiation matrices, hyperviscosity, and
global linear algebra.  The projection target is the continuous mass law, not
the discrete integral of every term in the manufactured forcing.  In
particular,
\[
  \int_{\mathcal M(t)}\Delta_{\mathcal M(t)}c\,dS=0
\]
on a closed surface, so diffusion does not contribute to the prescribed mass
source even if its discrete quadrature is nonzero.

The ellipsoid weights are obtained by multiplying equal-area spherical
material weights by the surface Jacobian of the anisotropic linear map.  The
torus weights are obtained by scaling initial area-uniform material weights
by the time-dependent toroidal Jacobian.  The same weights define the mass
functional and the reported surface norms.

For either geometry, let
\(\widetilde M_h^n=(\bm w^n)^T\widetilde{\bm c}^{n}\) be the mass after the
unprojected BDF solve and let \(M_\star^n\) be the target obtained from the
continuous mass law.  The projection adds a spatially constant shift,
\[
  \bm c^n
  =
  \widetilde{\bm c}^{n}
  +\frac{M_\star^n-\widetilde M_h^n}
         {(\bm w^n)^T\bm 1}\bm 1.
\]
We report both the relative conservation error
\[
  \varepsilon_M^n
  =
  \frac{|M_h^n-M_\star^n|}
       {\max\{|M_\star^n|,\epsilon\}}
\]
and the relative size of the requested correction,
\[
  \eta_M^n
  =
  \frac{|M_\star^n-\widetilde M_h^n|}
       {\max\{|M_\star^n|,|\widetilde M_h^n|,\epsilon\}}.
\]
Here \(\epsilon=10^{-14}\) is used only to protect the normalization when the
target mass vanishes.
The ellipsoid sweep uses \(N=576,1024,1600\), and the torus sweep uses
\(N=1600,2304,3136\).  Each projected run is paired with an otherwise
identical unprojected calculation.

\begin{table}[!htbp]
\centering
\caption{Effect of the scalar mass projection.  The unprojected and projected
columns report the largest \(\varepsilon_M^n\) over each refinement sweep.  The
error range is the corrected relative \(\ell_2\) solution error divided by the
uncorrected error, and the final column is the largest \(\eta_M^n\).}
\scriptsize
\setlength{\tabcolsep}{3pt}
\label{tab:mass-correction-results}
\begin{tabular}{l c c c c c}
\toprule
Geometry & \(\xi\) & unprojected & projected &
error ratio & max correction \\
\midrule
Anisotropic ellipsoid & \(2\) &
\(2.57{\times}10^{-6}\) & \(3.34{\times}10^{-16}\) &
\(0.994\text{--}0.999\) & \(2.65{\times}10^{-7}\) \\
Anisotropic ellipsoid & \(4\) &
\(2.36{\times}10^{-6}\) & \(6.68{\times}10^{-16}\) &
\(0.975\text{--}0.997\) & \(5.49{\times}10^{-8}\) \\
Anisotropic ellipsoid & \(6\) &
\(7.32{\times}10^{-6}\) & \(1.67{\times}10^{-16}\) &
\(0.955\text{--}0.990\) & \(4.78{\times}10^{-8}\) \\
Breathing torus & \(2\) &
\(2.41{\times}10^{-5}\) & \(1.75{\times}10^{-16}\) &
\(0.963\text{--}1.000\) & \(2.12{\times}10^{-6}\) \\
Breathing torus & \(4\) &
\(7.22{\times}10^{-5}\) & \(5.25{\times}10^{-16}\) &
\(0.656\text{--}0.737\) & \(1.20{\times}10^{-6}\) \\
Breathing torus & \(6\) &
\(3.19{\times}10^{-4}\) & \(7.00{\times}10^{-16}\) &
\(0.811\text{--}0.926\) & \(1.28{\times}10^{-6}\) \\
\bottomrule
\end{tabular}
\end{table}

The projection reduces the discrepancy between the quadrature mass and its
target to roundoff without degrading the pointwise solution.  In the refreshed
sweep, the corrected error is no larger than the uncorrected error in any row.
The corrections remain small compared with the solution magnitude, with a
maximum relative change of \(2.12\times10^{-6}\).

The distinction between \(\varepsilon_M^n\) and \(\eta_M^n\) is important.
The first measures whether the accepted solution satisfies the prescribed mass
law; the second measures how much the unconstrained BDF update had to be
changed to enforce it.  Roundoff-level values of \(\varepsilon_M^n\) alone
would be uninformative because the projection enforces that condition by
construction.  The uniformly small values of \(\eta_M^n\), together with the
solution-error ratios at or below one, show that the projection removes only a
small accumulated scalar drift and does not conceal a spatial discretization
error.

\begin{figure}[pos=htbp]
\centering
\includegraphics[width=0.70\textwidth]{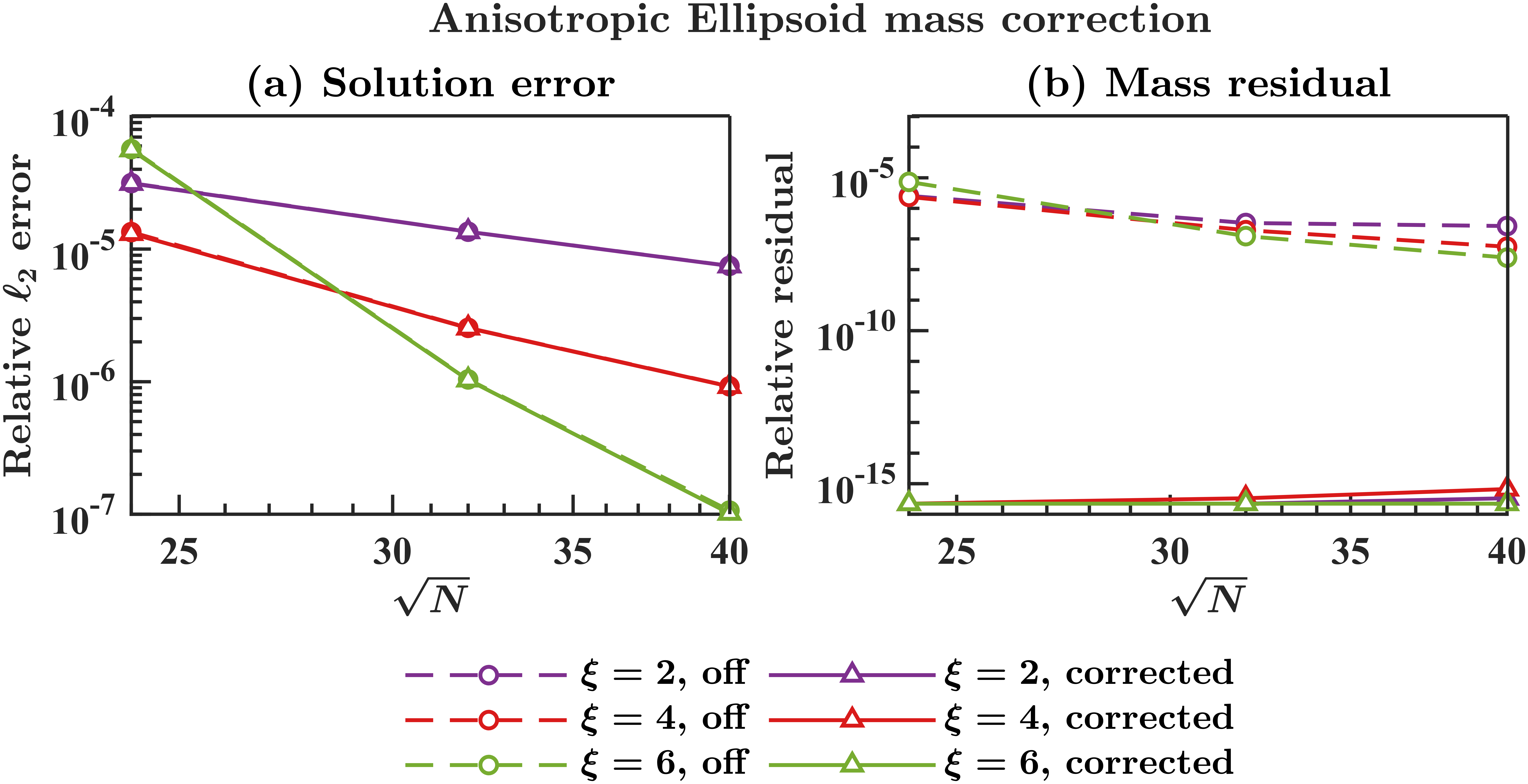}
\caption{Relative \(\ell_2\) solution error and relative mass-conservation
error with and without the scalar projection on the anisotropically deforming
ellipsoid.}
\label{fig:mass-correction-ellipsoid}
\end{figure}

\begin{figure}[pos=htbp]
\centering
\includegraphics[width=0.70\textwidth]{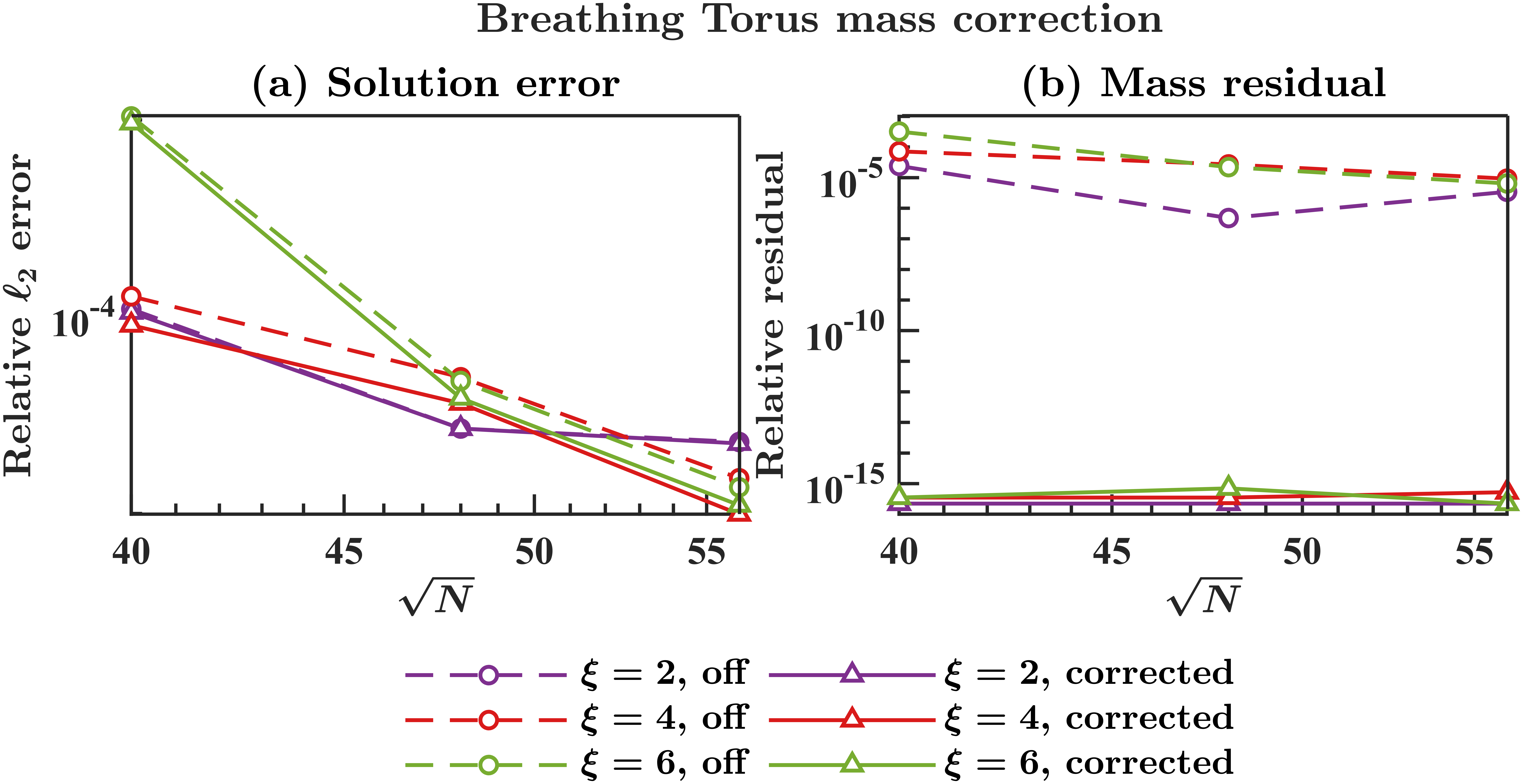}
\caption{Relative \(\ell_2\) solution error and relative mass-conservation
error with and without the scalar projection on the breathing torus.}
\label{fig:mass-correction-torus}
\end{figure}

%\clearpage
\subsection{Long-time and non-manufactured diagnostics}
\label{subsec:long-time-showcase}

The main convergence tests are intentionally short so that spatial error can
be separated cleanly from accumulated time-integration error.  To test whether
the geometry, differentiation-matrix, hyperviscosity, and global-solver updates
remain accurate over a longer sequence of surface configurations, the rotating
breathing sphere is also integrated to \(T=0.40\).  We retain the manufactured
solution, order-dependent timestep rule, and exact BDF startup used in the main
study.  If \(c_h(T)\) and \(c^\star(T)\) are the numerical and exact solutions,
the reported error is the quadrature-weighted relative norm, with
\(t_{n_T}=T\),
\[
  E_N(T)
  =
  \frac{\bigl(Q_h^{n_T}[|c_h-c^\star|^2]\bigr)^{1/2}}
       {\bigl(Q_h^{n_T}[|c^\star|^2]\bigr)^{1/2}}.
\]

\begin{table}[!htbp]
\centering
\caption{Long-time rotating breathing sphere at \(T=0.40\).  The step count
is reported at \(N=1600\).}
\scriptsize
\setlength{\tabcolsep}{3pt}
\label{tab:long-time-results}
\begin{tabular}{c c c c c c}
\toprule
\(\xi\) & \(E_{576}\) & \(E_{1024}\) & \(E_{1600}\) &
 max conservation error & steps \\
\midrule
\(2\) & \(2.73{\times}10^{-4}\) & \(2.21{\times}10^{-4}\) &
\(1.16{\times}10^{-4}\) & \(3.36{\times}10^{-16}\) & \(41\) \\
\(4\) & \(1.29{\times}10^{-5}\) & \(3.34{\times}10^{-6}\) &
\(5.07{\times}10^{-7}\) & \(1.12{\times}10^{-16}\) & \(203\) \\
\(6\) & \(9.24{\times}10^{-6}\) & \(8.19{\times}10^{-7}\) &
\(1.29{\times}10^{-7}\) & \(1.12{\times}10^{-16}\) & \(1019\) \\
\bottomrule
\end{tabular}
\end{table}

The error decreases with refinement for every order through the longest run,
which requires 1019 steps for \(\xi=6\) at \(N=1600\).  The conservation error
remains at roundoff throughout.  These observations rule out a secular loss of
accuracy from repeatedly updating the geometry and discrete operators over the
tested interval.  The two snapshot figures supplement the final-time norm by
showing the transported structure and the spatial distribution of the error.

\begin{figure}[pos=htbp]
\centering
\includegraphics[width=0.55\textwidth]{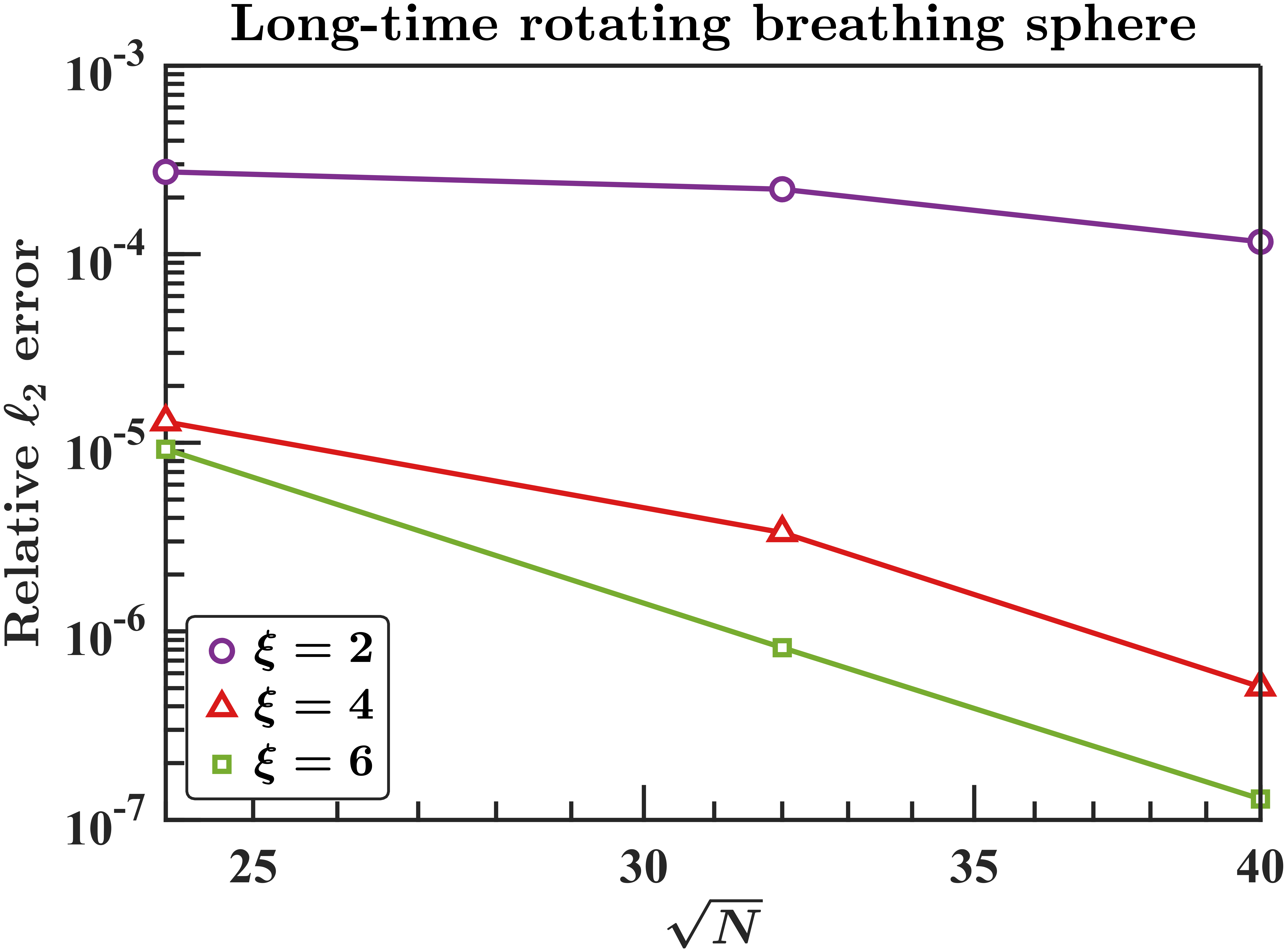}
\caption{Relative \(\ell_2\) error versus \(\sqrt N\) for the rotating
breathing sphere at \(T=0.40\).}
\label{fig:long-time-rotating-breathing-sphere}
\end{figure}

\begin{figure}[pos=htbp]
\centering
\includegraphics[width=0.78\textwidth]{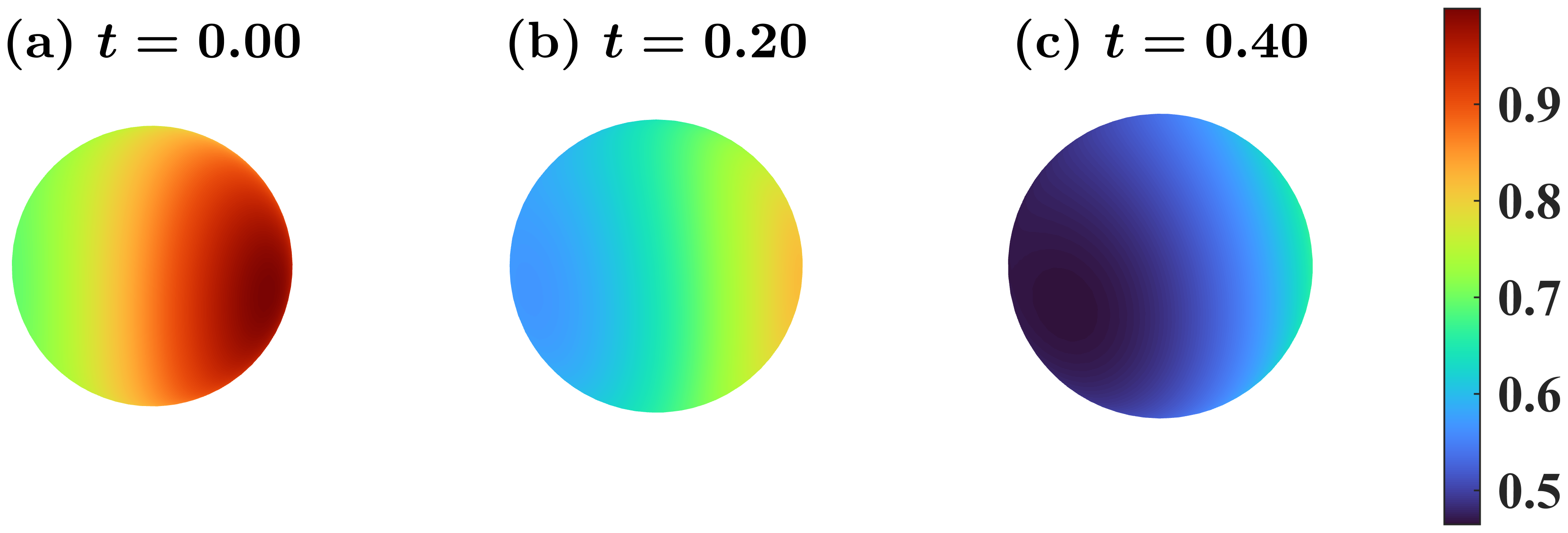}
\caption{Long-time concentration snapshots at \(t=0,0.20,0.40\) for the
rotating breathing sphere.}
\label{fig:long-time-rotating-breathing-sphere-showcase}
\end{figure}

\begin{figure}[pos=htbp]
\centering
\includegraphics[width=0.78\textwidth]{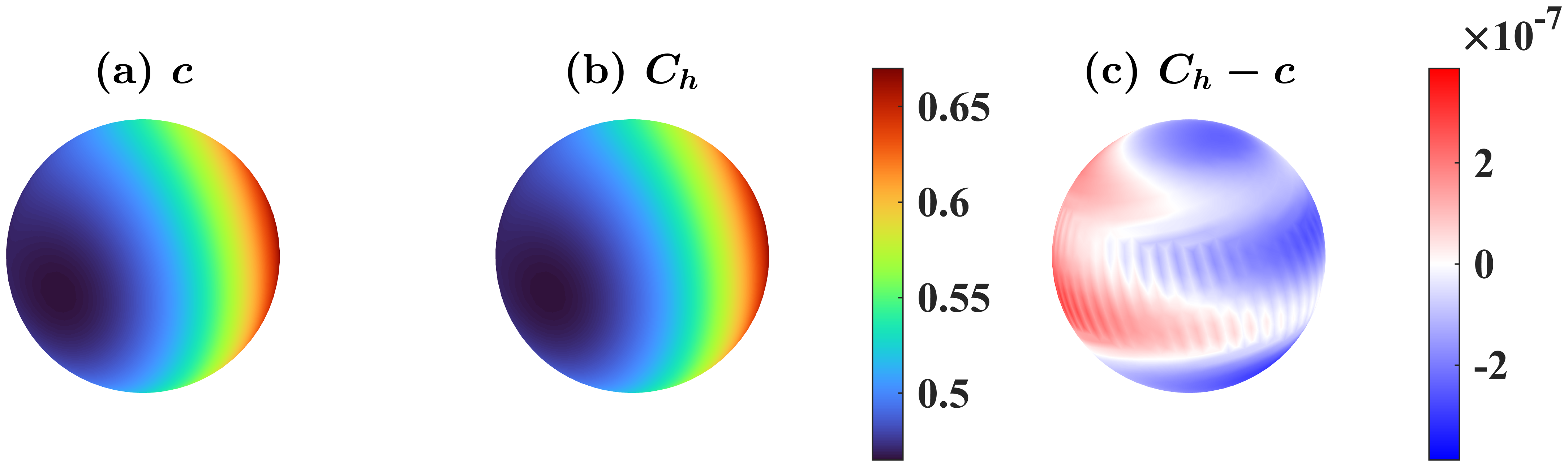}
\caption{Exact concentration, numerical concentration, and pointwise error at
\(T=0.40\) for the rotating breathing sphere with \(\xi=6\) and \(N=1600\).}
\label{fig:long-time-rotating-breathing-sphere-residual}
\end{figure}

We next remove the manufactured forcing and analytic normals.  Let
\(\bm U\in\mathbb S^2\), set \(\bm Y=R_z(0.8t)\bm U\), and let
\((\theta,\phi)\) be the spherical angles of \(\bm Y\).  The embedded surface is
\[
  \bm X(\bm U,t)=r(\theta,\phi,t)\bm Y,
\]
where
\begin{align*}
  r(\theta,\phi,t)
  ={}&1+0.10\sin(1.2t)
  +0.16\sin(1.7t+0.25)\sin^4\theta\cos(5\phi)\\
  &+0.08\cos(1.1t)\sin^3\theta\sin(3\phi).
\end{align*}
The initial concentration consists of two smooth material patches,
\[
  c(\bm U,0)
  =e^{-18(1-\bm U\cdot\bm a)}
  +0.65e^{-28(1-\bm U\cdot\bm b)},
\]
with
\(\bm a=(0.45,-0.25,0.8573214099741123)\) and
\(\bm b=(-0.55,0.60,0.5809475019311126)\).  Both vectors have unit length.
We solve source-free diffusion with \(\mu=0.02\), \(N=1600\), and \(\xi=4\).
No analytic normals are supplied to the solver; the evolving normals are
recovered from the global parametric SBF geometry model.  The quadrature
weights are formed independently from the analytic Jacobian of the radial map.

Because this problem has no exact solution, it is not used to infer a
convergence rate.  Instead, it visualizes diffusion of the two patches on a
non-spherical moving geometry and tests the source-free mass law.  The relative
mass-conservation errors are \(2.21\times10^{-16}\) at \(T=0.20\) and
\(4.42\times10^{-16}\) at \(T=0.40\).

\begin{figure}[pos=htbp]
\centering
\includegraphics[width=0.78\textwidth]{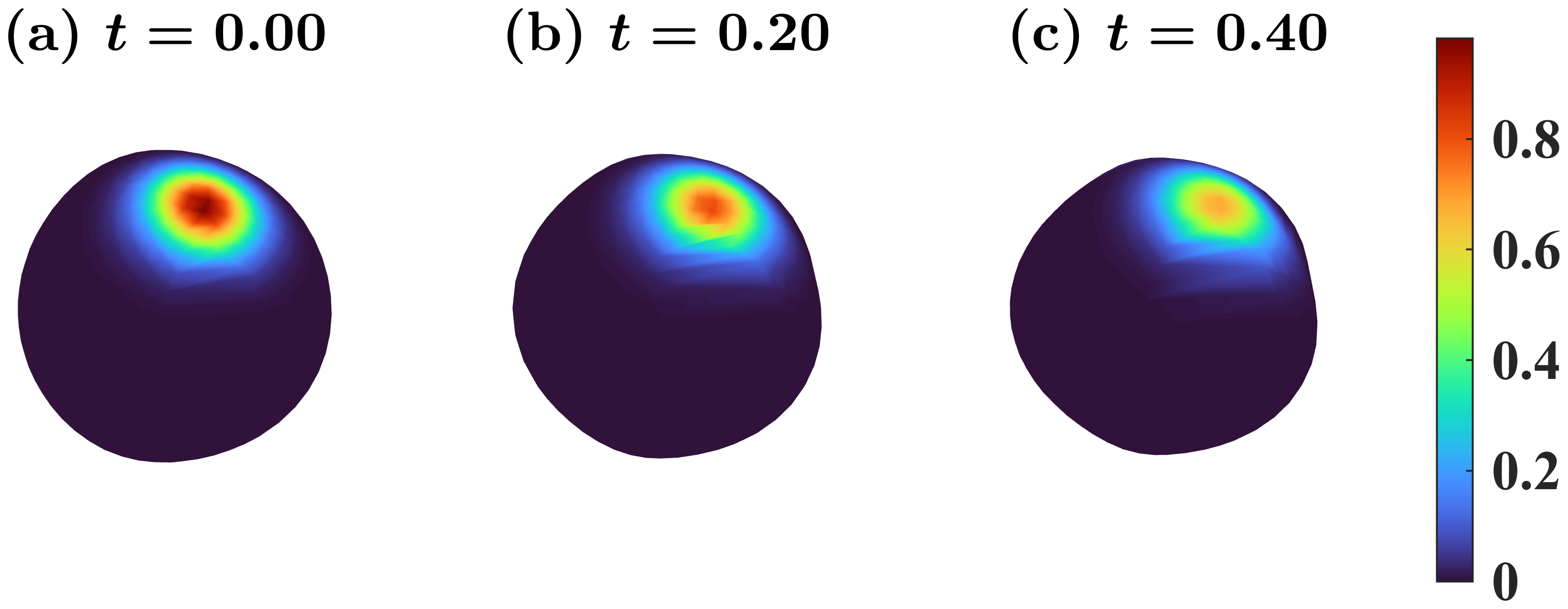}
\caption{Source-free diffusion on a rotating, breathing, bumpy sphere using
SBF normals.  The snapshots are shown at \(t=0,0.20,0.40\).}
\label{fig:bumpy-sphere-showcase}
\end{figure}

%\clearpage
\subsection{Fluid-driven membrane configuration}
\label{app:rbc-capstone-setup}

The membrane experiment in Section~\ref{subsec:rbc-capstone} is one-way
coupled.  A three-dimensional immersed-boundary/finite-element calculation is
first used to generate the membrane trajectory; the stored positions and
velocities are then treated as prescribed data by the surface transport
solver.  The coupling follows the regularized immersed-boundary formulation of
Peskin~\cite{Peskin2002IB} and the hybrid finite-difference/finite-element
surface discretization of Griffith and Luo
\cite{GriffithLuo2017HybridIB}.  All quantities reported below are
nondimensional.  The fluid and membrane equations, the regularized
Lagrangian--Eulerian coupling, and the discrete membrane energy are given in
\eqref{eq:rbc-capstone-fluid}--\eqref{eq:rbc-capstone-membrane-force}.

The reference membrane is obtained by refining an icosahedron four times and
mapping its vertices to the Evans--Fung biconcave profile
\cite{EvansFung1972ErythrocyteGeometry}.  For
\(\bm q=(q_1,q_2,q_3)\in\mathbb S^2\), let \(r^2=q_1^2+q_2^2\).  The map is
\begin{equation}
  \bm X_0(\bm q)
  =\bm x_c+R
  \begin{pmatrix}
    \frac{1}{2}\operatorname{sgn}(q_3)\sqrt{1-r^2}
    \left(c_0+c_1r^2+c_2r^4\right)\\
    q_1\\
    q_2
  \end{pmatrix},
  \label{eq:rbc-reference-profile}
\end{equation}
where \((c_0,c_1,c_2)=(0.207,2.002,-1.123)\).  This construction gives 2562
vertices and 5120 linear triangular elements.  The spherical material labels
used by the geometry model and tracer initial condition are rotated by
\(R_z(0.23)R_y(0.37)\).  This rotation changes only the labels and therefore
does not alter the physical reference surface.

The edge-stretching, area, and volume contributions to
\eqref{eq:rbc-capstone-membrane-energy} are differentiated analytically.  For
each interior edge, the four-node dihedral-energy gradient is evaluated by a
centered difference with coordinate perturbation \(10^{-6}\ell_e^0\).  The
membrane force is evaluated at the midpoint configuration.  Before conversion
to reference-area force density, the mean nodal force is removed as in
\eqref{eq:rbc-capstone-membrane-force}; consequently, numerical differentiation
of the bending energy cannot introduce a spurious net force.  The decomposition
into in-plane, hinge-bending, global-area, and global-volume energies is the
standard triangulated-membrane construction used in red-cell models
\cite{FedosovCaswellKarniadakis2010RBC}.

\begin{table}[p]
\centering
\caption{Fluid--structure configuration used to generate the biconcave
membrane trajectory.}
\label{tab:rbc-fsi-parameters}
\small
\setlength{\tabcolsep}{5pt}
\begin{tabular}{p{0.37\textwidth} p{0.49\textwidth}}
\toprule
Quantity & Value or method \\
\midrule
Fluid domain & \([0,4]\times[0,1]\times[0,1]\) \\
Boundary conditions & periodic in \(x_1,x_3\); no slip at \(x_2=0,1\) \\
Density and viscosity & \(\rho=1\), \(\mu_f=2.0\times10^{-2}\) \\
Body force & \(\bm g=4.0\times10^{-2}\bm e_1\) \\
Initial velocity & \(\bm u_f(\bm x,0)=
  [4.0\times10^{-2}/(2\mu_f)]x_2(1-x_2)\bm e_1\) \\
Base Eulerian grid & \(128\times32\times32\) staggered cells \\
Adaptive hierarchy & at most \(3\) levels; refinement ratio \(2\) \\
Fluid advection & advective form; Adams--Bashforth with PPM \\
IB time integration & explicit midpoint rule \\
Lagrangian--Eulerian coupling & four-point kernel \(\delta_h\)
  (\texttt{IB\_4}); interaction-point density \(2\) \\
Surface finite elements & linear triangles; consistent mass matrix \\
Fluid timestep and final time & \(\Delta t_f=10^{-3}\), \(T=2\) \\
Reference center and radius & \(\bm x_c=(0.75,0.35,0.50)\), \(R=0.18\) \\
Membrane mesh & \(2562\) vertices; \(5120\) faces \\
Energy coefficients & \(\kappa_s=0.5\),
  \(\kappa_b=2.5\times10^{-4}\), \(\kappa_A=5\), \(\kappa_V=20\) \\
Trajectory sampling & every \(5\) fluid steps; \(401\) stored frames \\
\bottomrule
\end{tabular}
\end{table}

At each stored time, the membrane is represented by the reduced spherical SBF
model of Section~\ref{subsec:global-parametric-sbf-models}.  The model uses a
degree-seven polyharmonic spline, spherical harmonics through degree five, and
854 farthest-point control sites.  A diagonal regularization of
\(10^{-13}\max(1,\max_{ij}|K_{ij}|)\) is added to the SBF block before its LU
factorization.  Cubic Hermite interpolation of the stored positions and
velocities supplies a continuously differentiable trajectory between frames.
Surface positions, velocity, tangents, normals, and the Jacobian-weighted
quadrature rule are all evaluated from this same model.

Table~\ref{tab:rbc-surface-parameters} lists the settings used to solve
\eqref{eq:rbc-capstone-pde}.  The time step is the final-time-adjusted value
obtained from the order-dependent restriction \(\Delta t^3\le h^\xi\).  The
rearrangement machinery remains active, although the quality indicator stays
below its trigger value throughout this trajectory.

\begin{table}[p]
\centering
\caption{Geometry and surface-transport parameters for the fluid-driven
membrane calculation.}
\label{tab:rbc-surface-parameters}
\small
\setlength{\tabcolsep}{5pt}
\begin{tabular}{p{0.37\textwidth} p{0.49\textwidth}}
\toprule
Quantity & Value or method \\
\midrule
Geometry model & degree-\(7\) spherical PHS-SBF; harmonics through degree \(5\);
  \(854\) control sites \\
Geometry regularization &
  \(10^{-13}\max(1,\max_{ij}|K_{ij}|)\) \\
Temporal geometry interpolation & cubic Hermite interpolation of positions
  and velocities \\
Surface diffusion coefficient & \(\mu=2.0\times10^{-3}\) \\
PDE resolution and order & \(N=2562\), \(\xi=6\) \\
PDE timestep and final time & \(\Delta t=9.98302885095\times10^{-5}\),
  \(T=2\), \(20{,}034\) steps \\
Stencil-size factor & \(1\) \\
Local operator update & defect tolerance \(10^{-6}\); at most \(4\) corrections \\
Neighbor update & reuse with a check every \(5\) steps \\
Hyperviscosity update & adaptive geometry predictor; selected power \(k=4\) \\
Global linear algebra & \(4\) defect sweeps followed, if needed, by GMRES with
  frozen ILU; ILU drop tolerance \(10^{-4}\) \\
Rearrangement criterion & quality threshold \(2.3\); \(3\)-step look-ahead; at least
  \(8\) steps between rearrangements \\
Rearrangement transfer & local tangent-plane interpolation \\
Conservation & geometry-quadrature mass projection after every step \\
Visualization & \(10{,}000\) material sites \\
\bottomrule
\end{tabular}
\end{table}

\subsection{Marker rearrangement and history backfill}
\label{subsec:rearrangement-results}

This experiment isolates the accuracy of Lagrangian rearrangement and
semi-Lagrangian history backfill.  The physical surface is the fixed oblate
spheroid
\[
  \bm X(\lambda,\eta)
  =
  \left(
    \sqrt{1-\eta^2}\cos\lambda,\,
    \sqrt{1-\eta^2}\sin\lambda,\,
    0.62\eta
  \right).
\]
The material labels satisfy
\[
  \dot\lambda=0.6+1.92\eta,
  \qquad
  \dot\eta=0.28(1-\eta^2),
\]
which generates a smooth tangential velocity with nonzero surface divergence
and progressively degrades the marker distribution even though the embedded
surface does not change.  This separates marker distortion from errors caused
by normal motion or changing curvature.

We solve forced advection to \(T=0.35\), with no diffusion or reaction.  The
manufactured concentration is
\[
  c(\bm x,t)
  =
  e^{-t}
  \left[
    1+0.12x+0.08yz+0.06(x^2-y^2)+0.05xyz
  \right],
\]
and the forcing is evaluated analytically as
\[
  f=D_t c+c\nabla_{\mathcal M}\!\cdot\bm v.
\]
Exact derivatives are used only to evaluate this forcing and the final error;
all spatial derivatives in the numerical update are produced by the
tangent-plane RBF-FD discretization.

At the prescribed rearrangement time \(t_r=0.175\), new markers are generated
by the area-weighted
farthest-point procedure
\eqref{eq:surface-area-density}--\eqref{eq:fps-remap}.  The BDF history at the
new markers is reconstructed by the semi-Lagrangian backfill
\eqref{eq:sl-backtrace}--\eqref{eq:history-transfer}.  This procedure repairs
a degraded Lagrangian cloud; it does not replace the Lagrangian
time-discretization of the PDE.

To quantify marker degradation, let \(d_i\) be the Euclidean distance from
marker \(i\) to its nearest distinct neighbor and define
\[
  q_X(t)=\frac{\max_i d_i(t)}{\min_i d_i(t)}.
\]
A value near one indicates uniform local separation, while a large value
signals simultaneous crowding and gap formation.  The error gain in
Table~\ref{tab:spheroid-adaptive-remap} is
\(E_{\rm Lagrangian}/E_{\rm rearranged}\).  Thus a gain greater than one means
that rearrangement improves the final solution as well as the point cloud.

\begin{table}[!htbp]
\centering
\caption{Pure Lagrangian and rearranged solutions at \(N=1152\).  The quality
columns report the maximum nearest-neighbor quality ratio over the run.}
\scriptsize
\setlength{\tabcolsep}{3pt}
\label{tab:spheroid-adaptive-remap}
\begin{tabular}{c c c c c c}
\toprule
\(\xi\) & Lagr. error & remap error & gain &
Lagr. quality & remap quality \\
\midrule
\(2\) & \(7.65{\times}10^{-5}\) & \(6.41{\times}10^{-5}\) & \(1.19\) & \(2.68\) & \(1.98\) \\
\(4\) & \(2.04{\times}10^{-5}\) & \(1.76{\times}10^{-5}\) & \(1.16\) & \(2.68\) & \(1.98\) \\
\(6\) & \(1.98{\times}10^{-5}\) & \(1.80{\times}10^{-5}\) & \(1.10\) & \(2.68\) & \(1.98\) \\
\bottomrule
\end{tabular}
\end{table}

Across \(N=512,768,1024,1152\), the rearranged solution retains the
convergence trend of the pure Lagrangian calculation while keeping the quality
ratio near two.  At the finest tested resolution it also reduces the final
error for all three values of \(\xi\).

\begin{figure}[pos=htbp]
\centering
\includegraphics[width=\textwidth]{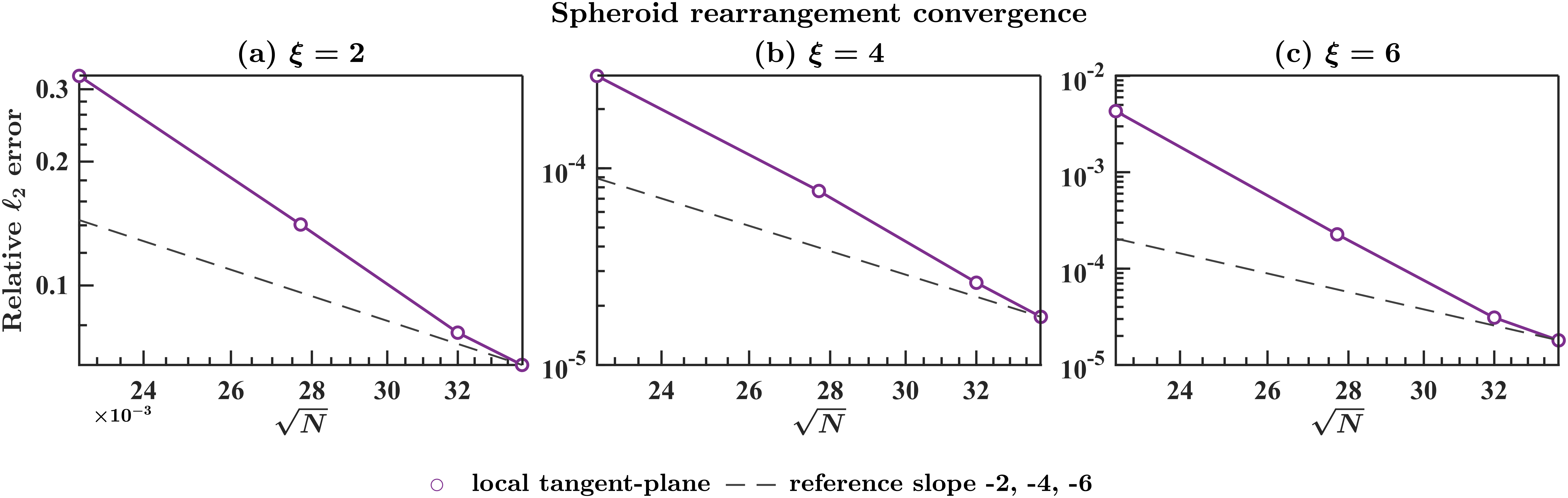}
\caption{Convergence with controlled Lagrangian rearrangement and local
tangent-plane transfer on the spheroid.}
\label{fig:spheroid-rearrangement-convergence}
\end{figure}

The comparison is deliberately made over a complete time integration rather
than a single interpolation step.  After rearrangement, the reconstructed BDF
history is used by every subsequent step, so the final error includes the
transfer error, backtrace error, and any downstream error propagated by the
multistep scheme.  The results therefore test the actual restart mechanism
used by the moving-surface solver.

To isolate transfer error, an additional study compares exact backfill, local
tangent-plane interpolation, and global SBF interpolation.  Exact backfill is
a diagnostic lower bound rather than a numerical transfer method.

\begin{table}[!htbp]
\centering
\caption{Controlled-remap transfer comparison.  Slopes are least-squares fits
of final-time relative error versus \(\sqrt N\) over the transfer study.}
\scriptsize
\setlength{\tabcolsep}{3pt}
\label{tab:spheroid-transfer-rates}
\begin{tabular}{l c c c}
\toprule
Backfill / transfer & \(\xi=2\) & \(\xi=4\) & \(\xi=6\) \\
\midrule
exact backfill & \(-3.52\) & \(-6.09\) & \(-10.41\) \\
local tangent-plane & \(-3.55\) & \(-6.20\) & \(-10.23\) \\
global SBF & \(-3.54\) & \(-6.18\) & \(-10.23\) \\
\bottomrule
\end{tabular}
\end{table}

Both numerical transfers preserve the convergence behavior of the
exact-backfill calculation.  The local interpolant is preferable in this setting
because it uses the tangent-plane basis already available on each stencil and
avoids a separate global transfer solve.

The fitted slopes in Table~\ref{tab:spheroid-transfer-rates} are not used as
standalone estimates of the formal order; their role is comparative.  At each
\(\xi\), the local tangent-plane and global SBF slopes agree with the
exact-backfill slope, showing that neither numerical transfer changes the
observed refinement trend.  Since the local transfer also avoids constructing
and solving an additional global interpolation system, it is used in the
remaining experiments.

\subsection{Additional literature controls}
\label{app:literature-controls}

Table~\ref{tab:literature-matched-xi2} removes the high-order/coarse-resolution
advantage from Table~\ref{tab:literature-thresholds}.  The RBF-FD point count
is chosen as
\[
  N=\operatorname{round}\!\left(
    \frac{|\mathcal M(0)|}{h_{\rm lit}^2}
  \right),
\]
so that the nominal spacing
\(h=\sqrt{|\mathcal M(0)|/N}\) matches the published spacing
\(h_{\rm lit}\).  The published timestep is retained up to the rounding needed
to reach the same final time with an integer number of steps.  We then use
\(\xi=2\), SBF normals, exact BDF startup, and the same stabilization,
rearrangement, mass-projection, and linear-algebra procedures as in the other
experiments.

The comparison metric is taken from the corresponding paper rather than
replaced by a common endpoint norm.  The Dziuk--Elliott entries use the absolute
\(L^\infty(0,T;L^2(\mathcal M(t)))\) error, while the Olshanskii--Xu entries use
the absolute \(L^2(0,T;L^2(\mathcal M(t)))\) error.  Consequently, ratios are
meaningful within each row, but the numerical values should not be compared
across the two groups.

\begin{table}[!htbp]
\centering
\caption{Resolution- and timestep-matched \(\xi=2\) control against the
Dziuk--Elliott \cite{DziukElliott2007EvolvingSurfaces} and Olshanskii--Xu
\cite{OlshanskiiXu2017TraceFEM} manufactured tests.}
\scriptsize
\setlength{\tabcolsep}{3pt}
\label{tab:literature-matched-xi2}
\begin{tabular}{l c c c c}
\toprule
Benchmark & \(N\) & lit. error & our error & our/lit. \\
\midrule
Dziuk--Elliott sphere & \(9610\) &
\(1.225{\times}10^{-3}\) & \(2.391{\times}10^{-6}\) & \(0.00195\) \\
Dziuk--Elliott ellipsoid & \(4153\) &
\(2.295{\times}10^{-3}\) & \(4.173{\times}10^{-6}\) & \(0.00182\) \\
Olshanskii--Xu translating sphere & \(3217\) &
\(1.040{\times}10^{-2}\) & \(2.786{\times}10^{-4}\) & \(0.0268\) \\
Olshanskii--Xu rotating sphere & \(3217\) &
\(7.360{\times}10^{-3}\) & \(4.082{\times}10^{-4}\) & \(0.0555\) \\
Olshanskii--Xu shrinking sphere & \(12868\) &
\(3.038{\times}10^{-3}\) & \(5.711{\times}10^{-5}\) & \(0.0188\) \\
\bottomrule
\end{tabular}
\end{table}

The matched control remains below the published error in every case.  The
largest ratio is 0.0555 for the rotating-sphere test, and hence even the least
favorable matched run has an error about eighteen times smaller than the
published value.  This is a resolution-and-timestep control, not a claim of
equal computational cost: the trial spaces, quadrature rules, nonlinear
geometry work, and software implementations differ.  Its purpose is to show
that the high-order/coarse-resolution comparisons in the main text are not
created solely by evaluating the RBF-FD method at a different nominal spacing.

Olshanskii--Xu also report a non-manufactured conservation diagnostic on the
reference surface
\[
  (x_1-x_3^2)^2+x_2^2+x_3^2=1
\]
under the ambient velocity
\[
  \bm v(\bm x,t)
  =
  \bigl(0.1x_1\cos t,\,0.2x_2\sin t,\,0.2x_3\cos t\bigr).
\]
Equivalently, if \(\bm X_0\) lies on the reference surface, then
\[
  \bm X(t)
  =
  \operatorname{diag}\!\left(
    e^{0.1\sin t},
    e^{0.2(1-\cos t)},
    e^{0.2\sin t}
  \right)\bm X_0.
\]
The source-free initial field is
\(c(\bm x,0)=1+x_1x_2x_3\), so the continuous total mass is constant.  At the
finest published spacing, a \(\xi=2\), \(N=3484\) run over \(0\le t\le6\)
uses SBF normals, geometry-derived quadrature, mass projection, and two marker
rearrangements.  The quadrature-weighted mass is preserved to roundoff.

\begin{table}[!htbp]
\centering
\caption{Olshanskii--Xu complex moving-manifold mass diagnostic
\cite{OlshanskiiXu2017TraceFEM}.  The error is
\(|M(T)-M(0)|\) for the source-free problem.}
\scriptsize
\setlength{\tabcolsep}{3pt}
\label{tab:ox-complex-mass}
\begin{tabular}{l c c c c c}
\toprule
Benchmark & \(\xi\) & \(N\) & \(h\) &
lit. mass error & our mass error \\
\midrule
OX complex manifold & \(2\) & \(3484\) & \(0.0625\) &
\(1.006{\times}10^{-1}\) & \(0\) \\
\bottomrule
\end{tabular}
\end{table}

This experiment should not be read as an accuracy comparison: no exact
solution is available, and the reported quantity is precisely the invariant
enforced by the scalar projection.  Nor does the zero in
Table~\ref{tab:ox-complex-mass} imply that the pointwise solution is exact.  Its
purpose is narrower: it verifies that geometry-model quadrature, mass
projection, and two changes of the Lagrangian marker set remain mutually
consistent during a long, strongly deforming calculation.  The published
Olshanskii--Xu mass error is included only to place the conservation diagnostic
on the scale reported in the source paper.

\bibliographystyle{cas-model2-names}
\bibliography{main}

@article{reeger_sphere,
author = {Reeger, Jonah A. and Fornberg, Bengt},
title = {Numerical Quadrature over the Surface of a Sphere},
journal = {Studies in Applied Mathematics},
volume = {137},
number = {2},
pages = {174-188},
doi = {https://doi.org/10.1111/sapm.12106},
url = {https://onlinelibrary.wiley.com/doi/abs/10.1111/sapm.12106},
eprint = {https://onlinelibrary.wiley.com/doi/pdf/10.1111/sapm.12106},
year = {2016}
}

@article{reeger_smooth,
	title = {Numerical quadrature over smooth surfaces with boundaries},
	volume = {355},
	issn = {0021-9991},
	url = {https://www.sciencedirect.com/science/article/pii/S0021999117308483},
	doi = {https://doi.org/10.1016/j.jcp.2017.11.010},
	journal = {Journal of Computational Physics},
	author = {Reeger, Jonah A. and Fornberg, Bengt},
	year = {2018},
	pages = {176--190},
}

@article{reeger_closed,
author = {Reeger, Jonah and Fornberg, B. and Watts, M.},
year = {2016},
month = {10},
pages = {20160401},
title = {Numerical quadrature over smooth, closed surfaces},
volume = {472},
journal = {Proceedings. Mathematical, physical, and engineering sciences},
doi = {10.1098/rspa.2016.0401}
}

@article{petras_meshfree_2022,
	title = {Meshfree {Semi}-{Lagrangian} {Methods} for {Solving} {Surface} {Advection} {PDEs}},
	volume = {93},
	issn = {1573-7691},
	url = {https://doi.org/10.1007/s10915-022-01966-w},
	doi = {10.1007/s10915-022-01966-w},
	number = {1},
	journal = {Journal of Scientific Computing},
	author = {Petras, Argyrios and Ling, Leevan and Ruuth, Steven J.},
	month = aug,
	year = {2022},
	pages = {11},
}

@article{shankar_mesh-free_2018,
	title = {Mesh-free semi-{Lagrangian} methods for transport on a sphere using radial basis functions},
	volume = {366},
	issn = {0021-9991},
	url = {https://www.sciencedirect.com/science/article/pii/S0021999118302213},
	doi = {https://doi.org/10.1016/j.jcp.2018.04.007},
	journal = {Journal of Computational Physics},
	author = {Shankar, Varun and Wright, Grady B.},
	year = {2018},
	pages = {170--190},
}

@article{shankar_rbf-loi_2018,
	title = {{RBF}-{LOI}: {Augmenting} {Radial} {Basis} {Functions} ({RBFs}) with {Least} {Orthogonal} {Interpolation} ({LOI}) for solving {PDEs} on surfaces},
	volume = {373},
	issn = {0021-9991},
	url = {https://www.sciencedirect.com/science/article/pii/S0021999118304765},
	doi = {https://doi.org/10.1016/j.jcp.2018.07.015},
	journal = {Journal of Computational Physics},
	author = {Shankar, Varun and Narayan, Akil and Kirby, Robert M.},
	year = {2018},
	pages = {722--735},
}

@article{shankar_hyperviscosity-based_2018,
	title = {Hyperviscosity-based stabilization for radial basis function-finite difference ({RBF}-{FD}) discretizations of advection–diffusion equations},
	volume = {372},
	issn = {0021-9991},
	url = {https://www.sciencedirect.com/science/article/pii/S0021999118304145},
	doi = {https://doi.org/10.1016/j.jcp.2018.06.036},
	journal = {Journal of Computational Physics},
	author = {Shankar, Varun and Fogelson, Aaron L.},
	year = {2018},
	pages = {616--639},
}

@article{ShankarWrightNarayan2020,
  author = {Shankar, Varun and Wright, Grady B. and Narayan, Akil},
  title = {A Robust Hyperviscosity Formulation for Stable {RBF-FD} Discretizations of Advection-Diffusion-Reaction Equations on Manifolds},
  journal = {SIAM Journal on Scientific Computing},
  volume = {42},
  number = {4},
  pages = {A2371--A2401},
  year = {2020},
  doi = {10.1137/19M1288747}
}

@article{PetrasLingPiretRuuth2019LSRBFCPM,
  author = {Petras, A. and Ling, L. and Piret, C. and Ruuth, S. J.},
  title = {A Least-Squares Implicit {RBF-FD} Closest Point Method and Applications to {PDEs} on Moving Surfaces},
  journal = {Journal of Computational Physics},
  volume = {381},
  pages = {146--161},
  year = {2019},
  doi = {10.1016/j.jcp.2018.12.031}
}

@article{petras_rbf-fd_2018,
	title = {An {RBF}-{FD} closest point method for solving {PDEs} on surfaces},
	volume = {370},
	issn = {0021-9991},
	url = {https://www.sciencedirect.com/science/article/pii/S002199911830322X},
	doi = {https://doi.org/10.1016/j.jcp.2018.05.022},
	journal = {Journal of Computational Physics},
	author = {Petras, A. and Ling, L. and Ruuth, S. J.},
	year = {2018},
	pages = {43--57},
}

@article{ChenLing2020KernelHeatTransport,
  author = {Chen, Meng and Ling, Leevan},
  title = {Kernel-Based Collocation Methods for Heat Transport on Evolving Surfaces},
  journal = {Journal of Computational Physics},
  volume = {405},
  pages = {109166},
  year = {2020},
  doi = {10.1016/j.jcp.2019.109166}
}

@article{LubichMansourVenkataraman2013BDFEvolving,
  author = {Lubich, Christian and Mansour, Dhia and Venkataraman, Chandrasekhar},
  title = {Backward Difference Time Discretization of Parabolic Differential Equations on Evolving Surfaces},
  journal = {IMA Journal of Numerical Analysis},
  volume = {33},
  number = {4},
  pages = {1365--1385},
  year = {2013},
  doi = {10.1093/imanum/drs044}
}

@article{DziukElliott2007EvolvingSurfaces,
  author = {Dziuk, Gerhard and Elliott, Charles M.},
  title = {Finite Elements on Evolving Surfaces},
  journal = {IMA Journal of Numerical Analysis},
  volume = {27},
  number = {2},
  pages = {262--292},
  year = {2007},
  doi = {10.1093/imanum/drl023}
}

@article{DziukElliott2007SurfaceFiniteElements,
  author  = {Dziuk, Gerhard and Elliott, Charles M.},
  title   = {Surface Finite Elements for Parabolic Equations},
  journal = {Journal of Computational Mathematics},
  volume  = {25},
  number  = {4},
  pages   = {385--407},
  year    = {2007},
  url     = {https://www.global-sci.com/JCM/article/view/11834}
}

@article{dziuk_fully_2012,
	title = {A Fully Discrete Evolving Surface Finite Element Method},
	volume = {50},
	url = {https://doi.org/10.1137/110828642},
	doi = {10.1137/110828642},
	number = {5},
	journal = {SIAM Journal on Numerical Analysis},
	author = {Dziuk, Gerhard and Elliott, Charles M.},
	year = {2012},
	note = {\_eprint: https://doi.org/10.1137/110828642},
	pages = {2677--2694},
}

@article{DziukElliott2013, 
        title={Finite element methods for surface PDEs}, volume={22}, DOI={10.1017/S0962492913000056}, journal={Acta Numerica}, author={Dziuk, Gerhard and Elliott, Charles M.}, year={2013}, pages={289–396}}

@article{elliott_evolving_2015,
	title = {Evolving surface finite element method for the {Cahn}–{Hilliard} equation},
	volume = {129},
	issn = {0945-3245},
	url = {https://doi.org/10.1007/s00211-014-0644-y},
	doi = {10.1007/s00211-014-0644-y},
	number = {3},
	journal = {Numerische Mathematik},
	author = {Elliott, Charles M. and Ranner, Thomas},
	month = mar,
	year = {2015},
	pages = {483--534},
}

@article{olshanskii_eulerian_2014,
	title = {An {Eulerian} {Space}-{Time} {Finite} {Element} {Method} for {Diffusion} {Problems} on {Evolving} {Surfaces}},
	volume = {52},
	url = {https://doi.org/10.1137/130918149},
	doi = {10.1137/130918149},
	number = {3},
	journal = {SIAM Journal on Numerical Analysis},
	author = {Olshanskii, Maxim A. and Reusken, Arnold and Xu, Xianmin},
	year = {2014},
	note = {\_eprint: https://doi.org/10.1137/130918149},
	pages = {1354--1377},
}

@article{kovacs_convergent_2019,
	title = {A convergent evolving finite element algorithm for mean curvature flow of closed surfaces},
	volume = {143},
	issn = {0945-3245},
	url = {https://doi.org/10.1007/s00211-019-01074-2},
	doi = {10.1007/s00211-019-01074-2},
	number = {4},
	journal = {Numerische Mathematik},
	author = {Kovács, Balázs and Li, Buyang and Lubich, Christian},
	month = dec,
	year = {2019},
	pages = {797--853},
}

@article{olshanskii_finite_2009,
	title = {A {Finite} {Element} {Method} for {Elliptic} {Equations} on {Surfaces}},
	volume = {47},
	url = {https://doi.org/10.1137/080717602},
	doi = {10.1137/080717602},
	number = {5},
	journal = {SIAM Journal on Numerical Analysis},
	author = {Olshanskii, Maxim A. and Reusken, Arnold and Grande, Jörg},
	year = {2009},
	note = {\_eprint: https://doi.org/10.1137/080717602},
	pages = {3339--3358},
}

@article{olshanskii_stabilized_2014,
	title = {A stabilized finite element method for advection–diffusion equations on surfaces},
	volume = {34},
	issn = {0272-4979},
	url = {https://doi.org/10.1093/imanum/drt016},
	doi = {10.1093/imanum/drt016},
	number = {2},
	journal = {IMA Journal of Numerical Analysis},
	author = {Olshanskii, Maxim A. and Reusken, Arnold and Xu, Xianmin},
	month = apr,
	year = {2014},
	note = {\_eprint: https://academic.oup.com/imajna/article-pdf/34/2/732/2003745/drt016.pdf},
	pages = {732--758},
}

@article{grande_analysis_2018,
	title = {Analysis of a {High}-{Order} {Trace} {Finite} {Element} {Method} for {PDEs} on {Level} {Set} {Surfaces}},
	volume = {56},
	url = {https://doi.org/10.1137/16M1102203},
	doi = {10.1137/16M1102203},
	number = {1},
	journal = {SIAM Journal on Numerical Analysis},
	author = {Grande, Jörg and Lehrenfeld, Christoph and Reusken, Arnold},
	year = {2018},
	note = {\_eprint: https://doi.org/10.1137/16M1102203},
	pages = {228--255},
}

@article{elliott_unified_2021,
	title = {A unified theory for continuous-in-time evolving finite element space approximations to partial differential equations in evolving domains},
	volume = {41},
	issn = {0272-4979},
	url = {https://doi.org/10.1093/imanum/draa062},
	doi = {10.1093/imanum/draa062},
	number = {3},
	journal = {IMA Journal of Numerical Analysis},
	author = {Elliott, C M and Ranner, T},
	month = jul,
	year = {2021},
	note = {\_eprint: https://academic.oup.com/imajna/article-pdf/41/3/1696/38983520/draa062.pdf},
	pages = {1696--1845},
}

@inproceedings{olshanskii_trace_2017_static,
	address = {Cham},
	title = {Trace {Finite} {Element} {Methods} for {PDEs} on {Surfaces}},
	isbn = {978-3-319-71431-8},
	booktitle = {Geometrically {Unfitted} {Finite} {Element} {Methods} and {Applications}},
	publisher = {Springer International Publishing},
	author = {Olshanskii, Maxim A. and Reusken, Arnold},
	editor = {Bordas, Stéphane P. A. and Burman, Erik and Larson, Mats G. and Olshanskii, Maxim A.},
	year = {2017},
	pages = {211--258},
}

@article{OlshanskiiXu2017TraceFEM,
  author = {Olshanskii, Maxim A. and Xu, Xianmin},
  title = {A Trace Finite Element Method for {PDEs} on Evolving Surfaces},
  journal = {SIAM Journal on Scientific Computing},
  volume = {39},
  number = {4},
  pages = {A1301--A1319},
  year = {2017},
  doi = {10.1137/16M1099388}
}

@article{LehrenfeldOlshanskiiXu2018TraceFEM,
  author = {Lehrenfeld, Christoph and Olshanskii, Maxim A. and Xu, Xianmin},
  title = {A Stabilized Trace Finite Element Method for Partial Differential Equations on Evolving Surfaces},
  journal = {SIAM Journal on Numerical Analysis},
  volume = {56},
  number = {3},
  pages = {1643--1672},
  year = {2018},
  doi = {10.1137/17M1148633}
}

@article{Stone1990,
  author    = {Stone, Howard A.},
  title     = {A Simple Derivation of the Time-Dependent Convective-Diffusion Equation for Surfactant Transport along a Deforming Interface},
  journal   = {Physics of Fluids A: Fluid Dynamics},
  volume    = {2},
  number    = {1},
  pages     = {111--112},
  year      = {1990},
  doi       = {10.1063/1.857686},
  publisher = {AIP Publishing}
}

@article{XuZhaoLowengrub2006,
  author  = {Xu, Jian-Jun and Li, Zhilin and Lowengrub, John and Zhao, Hongkai},
  title   = {A Level-Set Method for Interfacial Flows with Surfactant},
  journal = {Journal of Computational Physics},
  volume  = {212},
  number  = {2},
  pages   = {590--616},
  year    = {2006},
  doi     = {10.1016/j.jcp.2005.07.016}
}

@article{ElliottStinner2010,
  author  = {Elliott, Charles M. and Stinner, Bj{\"o}rn},
  title   = {Modeling and Computation of Two Phase Geometric Biomembranes Using Surface Finite Elements},
  journal = {Journal of Computational Physics},
  volume  = {229},
  number  = {18},
  pages   = {6585--6612},
  year    = {2010},
  doi     = {10.1016/j.jcp.2010.05.014}
}

@article{RaetzRoeger2014,
  author  = {R{\"a}tz, Andreas and R{\"o}ger, Matthias},
  title   = {Symmetry Breaking in a Bulk-Surface Reaction-Diffusion Model for Signalling Networks},
  journal = {Nonlinearity},
  volume  = {27},
  number  = {8},
  pages   = {1805--1827},
  year    = {2014},
  doi     = {10.1088/0951-7715/27/8/1805}
}

@article{BarreiraElliottMadzvamuse2011,
  author  = {Barreira, Raquel and Elliott, Charles M. and Madzvamuse, Anotida},
  title   = {The Surface Finite Element Method for Pattern Formation on Evolving Biological Surfaces},
  journal = {Journal of Mathematical Biology},
  volume  = {63},
  pages   = {1095--1119},
  year    = {2011},
  doi     = {10.1007/s00285-011-0401-0}
}

@article{EilksElliott2008,
  author  = {Eilks, Carsten and Elliott, Charles M.},
  title   = {Numerical Simulation of Dealloying by Surface Dissolution via the Evolving Surface Finite Element Method},
  journal = {Journal of Computational Physics},
  volume  = {227},
  number  = {23},
  pages   = {9727--9741},
  year    = {2008},
  doi     = {10.1016/j.jcp.2008.07.023}
}

@article{Pudykiewicz2006,
  author  = {Pudykiewicz, Janusz A.},
  title   = {Numerical Solution of the Reaction-Advection-Diffusion Equation on the Sphere},
  journal = {Journal of Computational Physics},
  volume  = {213},
  number  = {1},
  pages   = {358--390},
  year    = {2006},
  doi     = {10.1016/j.jcp.2005.08.021}
}

@article{fuselier2013highorder,
  author  = {Fuselier, Edward J. and Wright, Grady B.},
  title   = {A High-Order Kernel Method for Diffusion and Reaction-Diffusion Equations on Surfaces},
  journal = {Journal of Scientific Computing},
  volume  = {56},
  number  = {3},
  pages   = {535--565},
  year    = {2013},
  doi     = {10.1007/s10915-013-9688-x}
}

@article{lehto2017rbfcompact,
  author  = {Lehto, Erik and Shankar, Varun and Wright, Grady B.},
  title   = {A Radial Basis Function ({RBF}) Compact Finite Difference ({FD}) Scheme for Reaction-Diffusion Equations on Surfaces},
  journal = {SIAM Journal on Scientific Computing},
  volume  = {39},
  number  = {5},
  pages   = {A2129--A2151},
  year    = {2017},
  doi     = {10.1137/16M1095457}
}

@article{piret2012orthogonal,
  author  = {Piret, C{\'e}cile},
  title   = {The Orthogonal Gradients Method: A Radial Basis Functions Method for Solving Partial Differential Equations on Arbitrary Surfaces},
  journal = {Journal of Computational Physics},
  volume  = {231},
  number  = {14},
  pages   = {4662--4675},
  year    = {2012},
  doi     = {10.1016/j.jcp.2012.03.007}
}

@article{shankar2015rbffdSurfaces,
  author  = {Shankar, Varun and Wright, Grady B. and Kirby, Robert M. and Fogelson, Aaron L.},
  title   = {A Radial Basis Function ({RBF})-Finite Difference ({FD}) Method for Diffusion and Reaction-Diffusion Equations on Surfaces},
  journal = {Journal of Scientific Computing},
  volume  = {63},
  number  = {3},
  pages   = {745--768},
  year    = {2015},
  doi     = {10.1007/s10915-014-9914-1}
}

@article{RuuthMerriman2008,
  author  = {Ruuth, Steven J. and Merriman, Barry},
  title   = {A Simple Embedding Method for Solving Partial Differential Equations on Surfaces},
  journal = {Journal of Computational Physics},
  volume  = {227},
  number  = {3},
  pages   = {1943--1961},
  year    = {2008},
  doi     = {10.1016/j.jcp.2007.10.009}
}

@article{MacdonaldRuuth2010,
  author  = {Macdonald, Colin B. and Ruuth, Steven J.},
  title   = {The Implicit Closest Point Method for the Numerical Solution of Partial Differential Equations on Surfaces},
  journal = {SIAM Journal on Scientific Computing},
  volume  = {31},
  number  = {6},
  pages   = {4330--4350},
  year    = {2010},
  doi     = {10.1137/080740003}
}

@article{MarzMacdonald2012,
  author  = {M{\"a}rz, Thomas and Macdonald, Colin B.},
  title   = {Calculus on Surfaces with General Closest Point Functions},
  journal = {SIAM Journal on Numerical Analysis},
  volume  = {50},
  number  = {6},
  pages   = {3303--3328},
  year    = {2012},
  doi     = {10.1137/120865537}
}

@article{ChenLing2020Extrinsic,
  author  = {Chen, Meng and Ling, Leevan},
  title   = {Extrinsic Meshless Collocation Methods for {PDEs} on Manifolds},
  journal = {SIAM Journal on Numerical Analysis},
  volume  = {58},
  number  = {2},
  pages   = {988--1007},
  year    = {2020},
  doi     = {10.1137/17M1158641}
}

@article{LiangZhao2013,
  author  = {Liang, Jian and Zhao, Hongkai},
  title   = {Solving Partial Differential Equations on Point Clouds},
  journal = {SIAM Journal on Scientific Computing},
  volume  = {35},
  number  = {3},
  pages   = {A1461--A1486},
  year    = {2013},
  doi     = {10.1137/120869730}
}

@article{TraskKuberry2020,
author = {Trask, Nathaniel and Kuberry, Paul},
year = {2020},
title = {Compatible meshfree discretization of surface {PDEs}},
journal = {Computational Particle Mechanics},
volume = {7},
pages = {271--277},
doi = {10.1007/s40571-019-00251-2}
}

@article{GrossTraskKuberryAtzberger2020,
  author  = {Gross, B. J. and Trask, N. and Kuberry, P. and Atzberger, P. J.},
  title   = {Meshfree Methods on Manifolds for Hydrodynamic Flows on Curved Surfaces: A Generalized Moving Least-Squares ({GMLS}) Approach},
  journal = {Journal of Computational Physics},
  volume  = {409},
  pages   = {109340},
  year    = {2020},
  doi     = {10.1016/j.jcp.2020.109340}
}

@article{GriffithLuo2017HybridIB,
  author  = {Griffith, Boyce E. and Luo, Xiaoyu},
  title   = {Hybrid Finite Difference/Finite Element Immersed Boundary Method},
  journal = {International Journal for Numerical Methods in Biomedical Engineering},
  volume  = {33},
  number  = {12},
  pages   = {e2888},
  year    = {2017},
  doi     = {10.1002/cnm.2888}
}

@article{JonesBoslerKuberryWright2023,
  author  = {Jones, Andrew M. and Bosler, Peter A. and Kuberry, Paul A. and Wright, Grady B.},
  title   = {Generalized Moving Least Squares vs. Radial Basis Function Finite Difference Methods for Approximating Surface Derivatives},
  journal = {Computers \& Mathematics with Applications},
  volume  = {147},
  pages   = {1--13},
  year    = {2023},
  doi     = {10.1016/j.camwa.2023.07.015}
}

@article{DavydovSchaback2018,
  author  = {Davydov, Oleg and Schaback, Robert},
  title   = {Minimal Numerical Differentiation Formulas},
  journal = {Numerische Mathematik},
  volume  = {140},
  number  = {3},
  pages   = {555--592},
  year    = {2018},
  doi     = {10.1007/s00211-018-0973-3}
}

@article{wright2023mgm,
  title={{MGM}: A meshfree geometric multilevel method for systems arising from elliptic equations on point cloud surfaces},
  author={Wright, Grady B and Jones, Andrew and Shankar, Varun},
  journal={SIAM Journal on Scientific Computing},
  volume={45},
  number={2},
  pages={A312--A337},
  year={2023},
  doi={10.1137/22M1490338},
  publisher={SIAM}
}

@article{Peskin2002IB,
  author = {Peskin, Charles S.},
  title = {The Immersed Boundary Method},
  journal = {Acta Numerica},
  volume = {11},
  pages = {479--517},
  year = {2002},
  doi = {10.1017/S0962492902000077}
}

@article{EvansFung1972ErythrocyteGeometry,
  author = {Evans, Evan A. and Fung, Yuan-Cheng},
  title = {Improved Measurements of the Erythrocyte Geometry},
  journal = {Microvascular Research},
  volume = {4},
  number = {4},
  pages = {335--347},
  year = {1972},
  doi = {10.1016/0026-2862(72)90069-6}
}

@article{FedosovCaswellKarniadakis2010RBC,
  author = {Fedosov, Dmitry A. and Caswell, Bruce and Karniadakis, George Em},
  title = {A Multiscale Red Blood Cell Model with Accurate Mechanics,
           Rheology, and Dynamics},
  journal = {Biophysical Journal},
  volume = {98},
  number = {10},
  pages = {2215--2225},
  year = {2010},
  doi = {10.1016/j.bpj.2010.02.002}
}

@article{hangelbroek2026sla,
author = {Hangelbroek, Thomas and Rieger, Christian and Wright, Grady B.},
title = {A Semi-{L}agrangian Scheme on Embedded Manifolds Using Generalized Local Polynomial Reproductions},
journal = {SIAM Journal on Numerical Analysis},
volume = {64},
number = {3},
pages = {853-875},
year = {2026},
doi = {10.1137/25M1788476},
}

\end{document}